\DeclareFontFamily{U}{rsfs}{%
\skewchar\font127}
\DeclareFontShape{U}{rsfs}{m}{n}{%
<-6>rsfs5<6-8.5>rsfs7<8.5->rsfs10}{}
\DeclareSymbolFont{rsfs}{U}{rsfs}{m}{n}
\DeclareRobustCommand*\rsfs{%
\@fontswitch\relax\mathrsfs}
\theoremstyle{plain}
\newtheorem{thm}{Theorem}[section]
\newtheorem{prop}[thm]{Proposition}
\newtheorem{cor}[thm]{Corollary}
\newtheorem{prop-defi}[thm]{Proposition-Definition}
\newtheorem{thm-defi}[thm]{Theorem-Definition}
\newtheorem{lem-defi}[thm]{Lemma-Definition}
\newtheorem{conj}[thm]{Conjecture}
\theoremstyle{definition}
\newtheorem{rmk}[thm]{Remark}
\newtheorem{exam}[thm]{Example}
\newdimen\argwidth
\def\db[#1\db]{
 \setbox0=\hbox{$#1$}\argwidth=\wd0
 \setbox0=\hbox{$\left[\box0\right]$}
  \advance\argwidth by -\wd0
 \left[\kern.3\argwidth\box0 \kern.3\argwidth\right]}
\newcommand{\cC}{\mathcal{C}}
\newcommand{\eE}{\mathcal{E}}
\newcommand{\hH}{\mathcal{H}}
\newcommand{\iI}{\mathcal{I}}
\newcommand{\lL}{\mathcal{L}}
\newcommand{\oO}{\mathcal{O}}
\newcommand{\zZ}{\mathcal{Z}}
\newcommand\mdot{{\scriptscriptstyle\bullet}}
\newcommand{\Hom}{\mathop{\rm Hom}\nolimits}
\newcommand{\pt}{\textrm{pt}}
\newcommand{\dR}{\mathbf{R}}
\newcommand{\dL}{\mathbf{L}}
\newcommand{\Hilb}{\mathop{\rm Hilb}\nolimits}
\newcommand{\Pic}{\mathop{\rm Pic}\nolimits}
\newcommand{\ch}{\mathop{\rm ch}\nolimits}
\newcommand{\rk}{\mathop{\rm rk}\nolimits}
\newcommand{\Ext}{\mathop{\rm Ext}\nolimits}
\newcommand{\Spec}{\mathop{\rm Spec}\nolimits}
\newcommand{\Coh}{\mathop{\rm Coh}\nolimits}
\newcommand{\cneq}{\mathrel{\raise.095ex\hbox{:}\mkern-4.2mu=}}
\newcommand{\eqcn}{\mathrel{=\mkern-4.5mu\raise.095ex\hbox{:}}}
\newcommand{\RHom}{\mathop{\dR\mathrm{Hom}}\nolimits}
\title[Stable pair invariants of local Calabi-Yau 4-folds]
{Stable pair invariants of local Calabi-Yau 4-folds}
\author{Yalong Cao}
\address{Kavli Institute for the Physics and Mathematics of the Universe (WPI),The University of Tokyo Institutes for Advanced Study, The University of Tokyo, Kashiwa, Chiba 277-8583, Japan}
\email{yalong.cao@ipmu.jp}
\author{Martijn Kool}
\address{Mathematical Institute, Utrecht University, P.O.~Box 80010 3508 TA Utrecht, The Netherlands}
\email{m.kool1@uu.nl}
\author{Sergej Monavari}
\address{Mathematical Institute, Utrecht University, P.O.~Box 80010 3508 TA Utrecht, The Netherlands}
\email{s.monavari@uu.nl}
\begin{document}
\maketitle
\begin{abstract}
In 2008, Klemm-Pandharipande defined Gopakumar-Vafa type invariants of a Calabi-Yau 4-fold $X$ using Gromov-Witten theory.
Recently, Cao-Maulik-Toda proposed a conjectural description of these invariants in terms of stable pair theory.

When $X$ is the total space of the sum of two line bundles over a surface $S$, and all stable pairs are scheme theoretically supported on the zero section, we express stable pair invariants in terms of intersection numbers on Hilbert schemes of points on $S$.
As an application, we obtain new verifications of the Cao-Maulik-Toda conjectures for low degree curve classes and find connections to Carlsson-Okounkov numbers. 
Some of our verifications involve genus zero Gopakumar-Vafa type invariants recently determined in the context of the log-local principle by Bousseau-Brini-van Garrel.

Finally, using the vertex formalism, we provide a few more verifications of the Cao-Maulik-Toda conjectures when thickened curves contribute and also for the case of local $\mathbb{P}^3$.
\end{abstract}

\section{Introduction}

\subsection{GW/GV invariants of Calabi-Yau 4-folds}

Gromov-Witten invariants are rational numbers, which are virtual counts of stable maps from curves to a fixed algebraic variety.
Due to multiple cover contributions, they are in general not integers. For Calabi-Yau 4-folds, Klemm-Pandharipande \cite{KP} defined 
Gopakumar-Vafa type invariants using Gromov-Witten theory and conjectured their integrality.
More specifically, let $X$ be a smooth projective Calabi-Yau 4-fold.
Gromov-Witten invariants
vanish for genus $g\geqslant2$ for dimensional reasons and one only needs to consider the genus zero and one cases.

The genus zero Gromov-Witten invariants of $X$ for class $\beta \in H_2(X,\mathbb{Z})$ are defined using
an insertion. Consider the evaluation map $\mathrm{ev} \colon \overline{M}_{0, 1}(X, \beta)\to X$. For $\gamma \in H^{4}(X, \mathbb{Z})$,
one defines
\begin{equation}
\mathrm{GW}_{0, \beta}(\gamma)
=\int_{[\overline{M}_{0, 1}(X, \beta)]^{\rm{vir}}}
\mathrm{ev}^{\ast}(\gamma).
\nonumber \end{equation}
The genus zero Gopakumar-Vafa type invariants 
\begin{align}\label{intro:n}
n_{0, \beta}(\gamma) \in \mathbb{Q}
\end{align}
are defined in \cite{KP} by the identity
\begin{align*}
\sum_{\beta>0}\mathrm{GW}_{0, \beta}(\gamma) \, q^{\beta}=
\sum_{\beta>0}n_{0, \beta}(\gamma) \sum_{d=1}^{\infty}
d^{-2}q^{d\beta},
\end{align*}
where the sum is over all non-zero effective classes in $H_2(X,\mathbb{Z})$. For the genus one case, the virtual dimension of $\overline{M}_{1, 0}(X, \beta)$ is zero and one defines
\begin{align*}
\mathrm{GW}_{1, \beta}=\int_{[\overline{M}_{1, 0}(X, \beta)]^{\rm{vir}}}
1 \in \mathbb{Q}.
\end{align*}
The genus one Gopakumar-Vafa type invariants
\begin{align}\label{intro:n1}
n_{1, \beta} \in \mathbb{Q}
\end{align}
 are defined in~\cite{KP} by the identity
\begin{align*}
\sum_{\beta>0}
\mathrm{GW}_{1, \beta} \, q^{\beta}=
&\sum_{\beta>0} n_{1, \beta} \sum_{d=1}^{\infty}
\frac{\sigma(d)}{d} \, q^{d\beta}
+\frac{1}{24}\sum_{\beta>0} n_{0, \beta}(c_2(X))\log(1-q^{\beta}) \\
&-\frac{1}{24}\sum_{\beta_1, \beta_2}m_{\beta_1, \beta_2}
\log(1-q^{\beta_1+\beta_2}),
\end{align*}
where $\sigma(d)=\sum_{i|d}i$ and $m_{\beta_1, \beta_2}\in\mathbb{Z}$ are called meeting invariants, which can be inductively determined by the genus zero Gromov-Witten invariants of $X$.
In~\cite{KP}, both of the invariants (\ref{intro:n}), (\ref{intro:n1})
are conjectured to be integers.
Using localization techniques and mirror symmetry, they calculate the Gromov-Witten invariants of $X$ in numerous examples in support of their integrality conjecture.
The genus zero integrality conjecture has been proved by Ionel-Parker using symplectic geometry \cite[Thm.~9.2]{IP}.

\subsection{Stable pair invariants of Calabi-Yau 4-folds}\label{sect on pair inv on cpt CY4}

Stable pairs were introduced in general by Le Potier \cite{LeP} and used by Pandharipande-Thomas to define virtual invariants of smooth projective threefolds \cite{PT1, PT2, PT3}. Stable pair invariants of threefolds are related to Gromov-Witten invariants by the celebrated GW/PT correspondence \cite{MNOP, PT1}, which has been proved in many cases by Pandharipande-Pixton \cite{PP1, PP2}.

In \cite{CMT2}, Cao-Maulik-Toda studied stable pair theory of a smooth projective Calabi-Yau 4-fold $X$. They used stable pair invariants of $X$ to give a sheaf theoretical interpretation of the Gopakumar-Vafa type invariants (\ref{intro:n}) and (\ref{intro:n1})\,\footnote{In 
\cite{CMT1, CT2}, the authors also proposed a sheaf theoretical interpretation of (\ref{intro:n}), (\ref{intro:n1}) using Donaldson-Thomas type counting invariants of one dimensional 
stable sheaves on $X$.}.

Let $P_n(X,\beta)$ be the moduli space of stable pairs $\{s:\oO_X\to F\}$ with $\ch(F)=(0,0,0,\beta,n)$. 
There exists a virtual class 
\begin{equation}\label{pair moduli vir class intro}
[P_n(X, \beta)]^{\rm{vir}}\in H_{2n}\big(P_n(X, \beta),\mathbb{Z}\big), 
\end{equation}
in the sense of Borisov-Joyce \cite{BJ}, which depends on the choice of an orientation of a certain (real) line bundle over $P_n(X, \beta)$ \cite{CGJ}. 
For $\gamma \in H^{4}(X, \mathbb{Z})$, we define primary insertions
\begin{align*}
\tau \colon H^{4}(X,\mathbb{Z})\to H^{2}(P_n(X,\beta),\mathbb{Z}), \quad
\tau(\gamma)=\pi_{P\ast}(\pi_X^{\ast}\gamma \cup\ch_3(\mathbb{F}) ),
\end{align*}
where $\pi_X$, $\pi_P$ are projections from $X \times P_n(X,\beta)$
to the corresponding factors and 
$$
\mathbb{I}^{\mdot}=\{\oO \to \mathbb{F}\}
$$ 
is the universal stable pair on $X \times P_n(X,\beta)$. Note that $\ch_3(\mathbb{F})$ is 
Poincar\'e dual to the fundamental cycle of $\mathbb{F}$. The stable pair invariants of $X$ with primary insertions are defined by 
\begin{align}\label{inv cpt case}
P_{n,\beta}(\gamma):=\int_{[P_n(X,\beta)]^{\rm{vir}}}\tau(\gamma)^n. 
\end{align}
When $n=0$, we simply denote this invariant by $P_{0,\beta}$. We set $P_{0,0}:=1$ and $n_{0,0}(\gamma):=0$.
\begin{conj}\emph{(\cite{CMT2})} \label{conj:GW/GV g=0}
Let $X$ be a smooth projective Calabi-Yau 4-fold, $\beta \in H_2(X,\mathbb{Z})$, $\gamma \in H^4(X,\mathbb{Z})$, and $n \geqslant 1$. Then there exist choices of orientations such that
\begin{align*}
P_{n,\beta}(\gamma)=\sum_{\begin{subarray}{c} \beta_0+\beta_1+\cdots+\beta_n=\beta  \\ \beta_0, \beta_1,\ldots,\beta_n\geqslant0 \end{subarray}}P_{0,\beta_0}\cdot\prod_{i=1}^nn_{0,\beta_i}(\gamma),  \end{align*}
where the sum is over all effective decompositions of $\beta$.
\end{conj}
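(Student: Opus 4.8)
Since the statement concerns an arbitrary smooth projective Calabi-Yau $4$-fold, the plan is to combine three ingredients: deformation invariance of the virtual class, a degeneration formula for stable pair invariants carrying primary insertions, and explicit evaluation on toric local models via the vertex formalism. First I would use that $[P_n(X,\beta)]^{\rm{vir}}$ is deformation invariant, in the sense of Borisov-Joyce, to replace $X$ by a conveniently chosen deformation, and then set up an induction on $n$. The case $n=0$ is the normalization $P_{0,0}=1$ together with the definition of $P_{0,\beta}$; the case $n=1$ is the essential geometric input, and the inductive step is powered by a factorization of the $n$ insertions.

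\textbf{The base case.} I would first establish
\[
P_{1,\beta}(\gamma)=\sum_{\beta_0+\beta_1=\beta}P_{0,\beta_0}\, n_{0,\beta_1}(\gamma).
\]
Here $\ch(F)=(0,0,0,\beta,1)$, so a stable pair $\{s\colon\oO_X\to F\}$ has one-dimensional support in class $\beta$ and holomorphic Euler characteristic $1$. The single insertion $\tau(\gamma)$ is a divisor class on $P_1(X,\beta)$ which, since $\ch_3(\mathbb{F})$ is Poincar\'e dual to the support cycle, constrains that support to meet a fixed cycle dual to $\gamma$. I expect the genus-zero Gopakumar-Vafa invariant $n_{0,\beta_1}(\gamma)$ to emerge exactly as the signed virtual count of connected, arithmetic genus-zero curves in class $\beta_1$ through this cycle, with the remaining Euler characteristic absorbed into a $\chi=0$ configuration counted by $P_{0,\beta_0}$. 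Matching this against the Klemm-Pandharipande definition of $n_{0,\beta}(\gamma)$, and invoking the genus-zero integrality of Ionel-Parker, would close the base case.

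\textbf{The inductive step.} For $n\geqslant2$ the right-hand side is the full product $\prod_{i=1}^{n}n_{0,\beta_i}(\gamma)$, which indicates that each of the $n$ insertions $\tau(\gamma)$ should pin down one rational component of the support. I would prove this through a degeneration formula for the stable pair invariants $P_{n,\beta}(\gamma)$ of a Calabi-Yau $4$-fold, a four-fold analog of the Li-Wu and Maulik-Pandharipande-Thomas degeneration, chosen so that the support configurations split into connected pieces. Tracking how the $n$ insertions distribute over the connected components then yields precisely one factor $n_{0,\beta_i}(\gamma)$ for each component carrying an insertion and a single factor $P_{0,\beta_0}$ for the residual $\chi=0$ part, summed over all effective decompositions $\beta_0+\beta_1+\cdots+\beta_n=\beta$. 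On toric local models the same factorization can be verified directly by Oh-Thomas localization and the vertex formalism, which simultaneously supplies a consistency check and the local building blocks entering the degeneration.

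\textbf{Main obstacle.} The genuine difficulty is that, in contrast with the threefold theory, $[P_n(X,\beta)]^{\rm{vir}}$ does not arise from an ordinary perfect obstruction theory and carries no Behrend function: it is the real virtual class of Borisov-Joyce, defined only after the choice of an orientation of the relevant line bundle. Consequently neither the degeneration formula nor the gluing of the resulting relative invariants is available off the shelf, and the compatibility of orientations across the degeneration, and under the splitting into connected components, must be constructed from scratch. Controlling these orientation signs, together with the contributions of non-reduced (``thickened'') curves that $n_{0,\beta}(\gamma)$ does not record directly, is where I expect the essential work to lie; it is exactly this orientation-and-degeneration package that would be needed to upgrade the local verifications of this paper into a proof of the conjecture in full generality.
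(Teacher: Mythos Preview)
The statement you are trying to prove is a \emph{conjecture} in the paper, not a theorem: it is labelled Conjecture~\ref{conj:GW/GV g=0} and is taken directly from \cite{CMT2}. The paper does not claim to prove it. Instead, the paper \emph{verifies} the conjecture in a finite list of explicit low-degree cases (Corollaries~\ref{main cor 1}, \ref{main cor 2}, Proposition~\ref{prop intro}) by computing both sides independently: the stable pair invariants $P_{n,\beta}([\mathrm{pt}])$ via Theorem~\ref{mainthm intro} or the vertex formalism, and the Gopakumar-Vafa type invariants $n_{0,\beta}(\gamma)$ from the tables of Klemm-Pandharipande \cite{KP} or Bousseau-Brini-van Garrel \cite{BBG}, and then checking numerically that the conjectural identity holds.

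Your proposal is therefore not a comparison against a proof in the paper but a sketch of how one might attack an open problem. As a sketch it is honest about its own gaps: you correctly identify that the degeneration formula for the Borisov-Joyce virtual class, the compatibility of orientations under degeneration, and the treatment of thickened curves are all missing ingredients. These are not technicalities---they are precisely why the statement remains a conjecture. In particular, your base case already assumes that $n_{0,\beta_1}(\gamma)$ is a signed virtual count of connected genus-zero curves through a cycle, but this is a sheaf-theoretic interpretation of a quantity \emph{defined} via Gromov-Witten theory, and establishing that equality is essentially the content of the conjecture itself for $n=1$. The inductive step likewise presupposes a splitting/degeneration package for $\mathrm{DT}_4$ stable pair invariants that does not currently exist.
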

\begin{conj}\emph{(\cite{CMT2})} \label{conj:GW/GV g=1}
Let $X$ be a smooth projective Calabi-Yau 4-fold. Then there exist choices of orientations such that
\begin{align*}
\sum_{\beta \geqslant 0}
P_{0, \beta} \, q^{\beta}=
\prod_{\beta>0} M\big(q ^{\beta}\big)^{n_{1, \beta}},
\end{align*}
where $M(q)=\prod_{k\geqslant 1}(1-q^{k})^{-k}$ denotes the MacMahon function.
\end{conj}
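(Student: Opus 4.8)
The plan is to verify Conjecture~\ref{conj:GW/GV g=1} not in full generality---which would resolve it---but for the local Calabi--Yau $4$-folds and low-degree curve classes advertised in the abstract, by reducing both sides to computable quantities and matching them order by order in the curve degree. First I would specialize to $X=\Tot_S(L_1\oplus L_2)$ with $L_1\otimes L_2\cong K_S$, so that $X$ is Calabi--Yau, and impose the standing hypothesis that every stable pair contributing to $P_{0,\beta}$ is scheme-theoretically supported on the zero section $i\colon S\hookrightarrow X$. Under this hypothesis a stable pair $\{\oO_X\to F\}$ with $\ch(F)=(0,0,0,\beta,0)$ is the pushforward of a pair on $S$ whose sheaf is one-dimensional of holomorphic Euler characteristic zero, so $P_0(X,\beta)$ is identified with a moduli space of pairs on $S$; for the low-degree classes at hand this is a Hilbert scheme of points, or a relative Hilbert scheme of points over the linear system $|\beta|$, with an explicit universal sheaf in terms of tautological bundles. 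The real content here is to track how the two normal directions $L_1\oplus L_2$, and the $\Ext$-groups they produce, enter the deformation theory, since these encode the genuinely four-dimensional part.

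Next I would compute the Borisov--Joyce virtual class in this local model. Since $n=0$ the virtual dimension is zero and there are no insertions, so $P_{0,\beta}$ is simply the degree of $[P_0(X,\beta)]^{\mathrm{vir}}$. Under the identification above this class is the Euler class of a tautological obstruction bundle $\Obs$---a square root of the self-dual obstruction complex whose construction depends on the orientation the conjecture leaves free---built from $L_1$, $L_2$ and $K_S$. The upshot is an expression for $P_{0,\beta}$ as an intersection number of tautological classes on a Hilbert scheme of points $S^{[n]}$, which I would evaluate using Nakajima-operator calculus together with the Carlsson--Okounkov vertex-operator formula for Euler classes of tautological $\Ext$-bundles; this is precisely the origin of the Carlsson--Okounkov numbers mentioned in the abstract.

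In parallel, on the Gopakumar--Vafa side I would pin down the genus-one invariants $n_{1,\beta}$. By their definition in~\cite{KP} these are built from the genus-one Gromov--Witten invariants $\mathrm{GW}_{1,\beta}$, the genus-zero invariants $n_{0,\beta}(c_2(X))$, and the meeting invariants; the genus-zero inputs I would obtain by invoking the log-local principle of Bousseau--Brini--van Garrel, which rewrites the relevant local Gromov--Witten invariants of $X$ in terms of log Gromov--Witten invariants of $S$ relative to a divisor, known in low degree. With $n_{1,\beta}$ in hand I would expand the right-hand product $\prod_{\beta>0}M(q^\beta)^{n_{1,\beta}}$ as a series in the curve degree; at the minimal primitive class this reads $P_{0,\beta}=n_{1,\beta}$, with combinatorial corrections such as $3\,n_{1,\beta}+\binom{n_{1,\beta}}{2}+n_{1,2\beta}$ appearing at $q^{2\beta}$, and compare these degree by degree against the series $\sum_{\beta\geqslant0}P_{0,\beta}\,q^\beta$ computed on the Hilbert scheme side.

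I expect the main obstacle to be showing that the Hilbert-scheme-side generating function factorizes \emph{exactly} into MacMahon factors: $M(q^\beta)$ must emerge as the contribution of a single rigid genus-one curve together with the tower of thickenings and points it supports in the normal directions, and one must prove that these local contributions are independent across distinct curves---so that they multiply---and carry precisely the exponent $n_{1,\beta}$. This requires both a closed-form evaluation of the tautological integrals that exhibits the MacMahon structure and a globally consistent choice of orientations so that all signs conspire correctly. Finally, when the support hypothesis fails and thickened curves genuinely contribute, I would abandon the Hilbert scheme description and instead verify the same matching by a direct box-counting computation in the vertex formalism, which additionally covers the case of local $\mathbb{P}^3$.
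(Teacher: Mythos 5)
First, note that the statement you are addressing is a \emph{conjecture} quoted from \cite{CMT2}; the paper does not prove it, but only verifies finitely many coefficients for specific local geometries, and your proposal is likewise a verification plan rather than a proof. Within that framing, your overall strategy --- identify $P_0(X,\beta)$ with a relative Hilbert scheme over $|\beta|$ when all stable pairs are scheme-theoretically supported on the zero section, write the Borisov--Joyce class as an Euler-class correction $e(-\dR\hH om_{\pi_{P_S}}(\mathbb{F},\mathbb{F}\boxtimes L_1))$ times the surface pairs class, reduce $P_{0,\beta}$ to tautological integrals on $S^{[m]}\times\mathbb{P}^{\chi(\beta)-1}$, and match coefficients of $\prod_{\beta>0}M(q^\beta)^{n_{1,\beta}}$ degree by degree (your expansion $3n_{1,\beta}+\binom{n_{1,\beta}}{2}+n_{1,2\beta}$ at order $q^{2\beta}$ is correct) --- is essentially the paper's route (Proposition \ref{compare virtual class}, Theorem \ref{mainthm}), supplemented by the vertex formalism when thickened curves contribute.

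There are, however, two concrete gaps. First, your proposed source for $n_{1,\beta}$ does not exist: the genus-one Gopakumar--Vafa type invariant is defined with the genus-one Gromov--Witten invariant $\mathrm{GW}_{1,\beta}$ as its leading input, and the log-local results of Bousseau--Brini--van Garrel supply only \emph{genus-zero} invariants. This is exactly why the paper can test Conjecture \ref{conj:GW/GV g=1} only for $(\mathbb{P}^2,\oO(-1),\oO(-2))$ and $(\mathbb{P}^1\times\mathbb{P}^1,\oO(-1,-1),\oO(-1,-1))$, where Klemm--Pandharipande tabulated the genus-one numbers, and uses the BBG data only for the genus-zero Conjecture \ref{conj:GW/GV g=0} (Corollary \ref{main cor 2}). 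Second, the Carlsson--Okounkov vertex-operator formula computes integrals of the form $\int_{S^{[m]}}e(T_{S^{[m]}}(L))$, whereas the integrand actually produced by Theorem \ref{mainthm} involves \emph{total} Chern classes of $T_{S^{[m]}}(L_1)$ and of tautological bundles twisted by $\oO(\pm1)$ on the linear-system factor; it is not of Carlsson--Okounkov type. The paper evaluates these integrals by Atiyah--Bott localization on $S^{[m]}$ for toric $S$, and records the coincidence of the resulting numbers with Carlsson--Okounkov numbers only as an unexplained empirical observation, not as a computational tool. Your closing remark --- that an exact MacMahon factorization would require independence of local contributions and a globally consistent orientation --- is a fair account of why the statement remains open beyond these finitely many checks.
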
  
Conjecture \ref{conj:GW/GV g=0} can be interpreted as a wall-crossing formula in the category of D0-D2-D8 bound states in Calabi-Yau 4-folds \cite{CT1},
while Conjecture \ref{conj:GW/GV g=1} seems to be more mysterious.
In \cite{CMT2}, these conjectures were verified in the following cases (modulo some minor assumptions in some of the cases). In each case, Conjecture \ref{conj:GW/GV g=0} was only verified for $n=1$.
\begin{itemize}
\item $X$ is a general sextic and $\beta = [\ell], 2[\ell]$, where $\ell \subseteq X$ is a line.
\item $X$ is a Weierstrass elliptic fibration and $\beta = r [F]$, where $[F]$ is the fibre class and $r>0$ (in the case of Conjecture \ref{conj:GW/GV g=0} only for $r=1$).
\item $X = Y \times E$, where $Y$ is a smooth projective Calabi-Yau threefold, $E$ is an elliptic curve, and $\beta$ is the push-forward of an irreducible class on $Y \times \{\mathrm{pt}\}$.
\item $X = Y \times E$, where $Y$ is a smooth projective Calabi-Yau threefold, $E$ is an elliptic curve, and $\beta = r [E]$, where $[E]$ is the fibre class and $r>0$ (Conjecture \ref{conj:GW/GV g=1} only).
\end{itemize}
When $X$ is either the total space of a smooth projective Fano threefold, or the total space of $\oO(-1) \oplus \oO(-2)$ on $\mathbb{P}^2$, or $\oO(-1,-1) \oplus \oO(-1,-1)$ on $\mathbb{P}^1 \times \mathbb{P}^1$, the moduli spaces $P_n(X,\beta)$ are projective and it makes sense to consider Conjectures \ref{conj:GW/GV g=0} and \ref{conj:GW/GV g=1}. In this setting, the conjectures were verified in some cases for irreducible curve classes in \cite{CMT2}. 

One of the main goals of this paper is to provide more verifications for these local geometries for more general low degree curve classes.

\subsection{Stable pair invariants of local surfaces}

Let $S$ be a smooth projective surface and let $L_1, L_2$ be two line bundles on $S$ satisfying $L_1 \otimes L_2 \cong K_S$. Then the total space $X$ of $L_1 \oplus L_2$ over $S$ is a non-proper Calabi-Yau 4-fold, which we refer to as a \emph{local surface}. Consider the moduli space $P_n(X,\beta)$ of stable pairs $(F,s)$ with $\chi(F)=n$ and such that $F$ has proper scheme theoretic support in class $\beta \in H_2(X,\mathbb{Z})$. Although $P_n(X,\beta)$ is in general non-proper, it can be proper in several interesting cases (Propositions \ref{proper crit 1}, \ref{listproj}). Then we can define virtual classes \eqref{pair moduli vir class intro} and 
corresponding stable pair invariants \eqref{inv cpt case}.

\begin{exam}
For  $(S,L_1, L_2) = (\mathbb{P}^2,\oO(-1), \oO(-2))$ and $(\mathbb{P}^1 \times \mathbb{P}^1, \oO(-1,-1), \oO(-1,-1))$, 
the moduli space $P_n(X,\beta)$ is projective \emph{for all} $n,\beta$ (see Proposition \ref{proper crit 1}). 
\end{exam}

\begin{exam} \label{P1xP1 cpt/noncpt}
For $(S,L_1, L_2) = (\mathbb{P}^1 \times \mathbb{P}^1, \oO(-1,0), \oO(-1,-2))$, $P_n(X,\beta)$ is in general non-proper. E.g.~let $H_1:= \{\mathrm{pt}\} \times \mathbb{P}^1$, take $\beta = [H_1]$, and $n = \chi(\oO_{H_1})=1$. Then $N_{H_1/X} \cong \oO \oplus \oO \oplus \oO(-2)$ has sections in the first fibre direction, so $H_1 \subseteq \mathbb{P}^1 \times \mathbb{P}^1 \subseteq X$ can move off the zero section $\mathbb{P}^1 \times \mathbb{P}^1 \subseteq X$ and $P_1(X,[H_1])$ is non-proper. On the other hand, for $H_2:=  \mathbb{P}^1 \times \{\mathrm{pt}\}$ and $\beta = [H_2]$, we have $\beta \cdot L_1 < 0$ and $\beta \cdot L_2 < 0$, so $P_1(X,[H_2])$ is projective by Proposition \ref{proper crit 1}.
\end{exam}

When $S$ is toric, the local surface $X$ is toric and the vertex formalism for calculating stable pair invariants of $X$ has been developed in \cite{CK2, CKM} in analogy with \cite{PT2}. Let $T \subseteq (\mathbb{C}^*)^4$ denote the 3-dimensional subtorus preserving the Calabi-Yau volume form, then the fixed locus $P_n(X,\beta)^T$ consists of finitely many isolated reduced points \cite[Sect.~2.2]{CK2}, though the number of fixed points is typically very large making calculations using the vertex formalism cumbersome. 

Although we perform a few new calculations using the vertex formalism as well, 
we mainly focus on another approach, where we use the global geometry of $S$. 
We consider the case when all stable pairs on $X$ are scheme theoretically supported on the zero section $\iota:S\hookrightarrow X$, i.e. we have an isomorphism
$$
\iota_*:P_n(S,\beta) \cong P_n(X,\beta). 
$$
Under this isomorphism, we have (Proposition \ref{compare virtual class})
\begin{equation}\label{intro choice of ori}
[P_n(X,\beta)]^{\mathrm{vir}}=(-1)^{\beta \cdot L_2 +n}\cdot e\big(-\dR\hH om_{\pi_{P_{S}}}(\mathbb{F}, \mathbb{F} \boxtimes L_1)\big) \cdot [P_n(S,\beta)]^{\mathrm{vir}},  
\end{equation}
where $[P_n(S,\beta)]^{\mathrm{vir}}$ is the virtual class of the pairs obstruction theory on $S$, $e(\cdot)$ denotes Euler class, $\pi_{P_{S}} \colon S \times P_n(S, \beta) \to P_n(S, \beta)$ is the projection, $\dR\hH om_{\pi_{P_S}} = \dR \pi_{P_S*} \circ \dR\hH om$, and $\mathbb{F}$ is the universal 1-dimensional sheaf on $S \times P_n(S, \beta)$.
The sign $(-1)^{\beta \cdot L_2 +n}=(-1)^{\beta\cdot c_1(Y) +n}$, where $Y=\mathrm{Tot}_S(L_1)$, comes from a preferred choice of orientation on $P_n(X,\beta)$ which was discussed in a similar situation in \cite{CaoFano}.

In order to use \eqref{intro choice of ori} for calculations, we need the fact that $P_n(S,\beta)$ is isomorphic to a relative Hilbert scheme. 
More precisely, assume $b_1(S) = 0$ and 
denote by $|\beta|$ the linear system determined by $\beta$. 
Denote by $\cC \rightarrow |\beta|$ the universal curve, then \cite[Prop.~B.8]{PT3} gives
$$
P_n(S,\beta) \cong \Hilb^m(\cC / |\beta|),
$$
where $\Hilb^m(\cC / |\beta|)$ denotes the relative Hilbert scheme of $m$ points on the fibres of $\cC \rightarrow |\beta|$ and
$$
m = n + g(\beta) - 1 = n + \tfrac{1}{2}\beta(\beta + K_S).
$$
This isomorphism was exploited in order to determine the surface contribution to stable pair invariants of local surfaces $\mathrm{Tot}_S(K_S)$ in \cite{KT2}. The relative Hilbert scheme $\Hilb^m(\cC / |\beta|)$ is an incidence locus in a smooth ambient space
$$
\Hilb^m(\cC / |\beta|) \subseteq S^{[m]} \times |\beta|,
$$
where $S^{[m]}$ denotes the Hilbert scheme of $m$ points on $S$. More precisely, $\Hilb^m(\cC / |\beta|)$ is cut out tautologically by a section of a vector bundle on $S^{[m]} \times |\beta|$ as we recall in Section \ref{sec:relHilb}. This allows us to express the stable pair invariants of $X$ in terms of intersection numbers on $S^{[m]} \times |\beta|$, or more precisely, on the ``virtual'' ambient space $S^{[m]} \times \mathbb{P}^{\chi(\beta)-1}$, where 
$$\chi(\beta) := \chi(\oO_S(\beta)). $$
In what follows, $\zZ \subseteq S \times S^{[m]}$ denotes the universal subscheme and $\iI$ is the corresponding ideal sheaf. For any line bundle $\lL$ on $S$, the corresponding tautological bundle is defined by
$$
\lL^{[m]} := p_* q^* \lL,
$$
where $p : \zZ \rightarrow S^{[m]}$ and $q : \zZ \rightarrow S$ are projections. 
Moreover, we consider the ``twisted tangent bundle'' \cite{CO}
\begin{equation} \label{twisted tangent bundle}
T_{S^{[m]}}(\lL) := \dR \Gamma(S,\lL) \otimes \oO -\dR\hH om_{\pi}(\iI, \iI \boxtimes \lL),
\end{equation}
where $\pi : S \times S^{[m]} \rightarrow S^{[m]}$ denotes projection. 
Finally, we denote the total Chern class by $c$ and the tautological line bundle on $\mathbb{P}^{\chi(\beta)-1}$ by $\oO(1)$. We prove the following result (Theorem \ref{mainthm}).
\begin{thm} \label{mainthm intro} 
Let $S$ be a smooth projective surface with $b_1(S) = p_g(S) = 0$ and $L_1, L_2 \in \Pic(S)$ such that $L_1 \otimes L_2 \cong K_S$. 
Suppose $\beta \in H_2(S,\mathbb{Z})$ and $n \geqslant 0$ are chosen such that $P_n(X,\beta) \cong P_n(S,\beta)$ for $X = \mathrm{Tot}_S(L_1 \oplus L_2)$. 
Denote by $[\mathrm{pt}] \in H^4(X,\mathbb{Z})$ the pull-back of the Poincar\'e dual of the point class on $S$.
Let $P_n(X,\beta)$ be endowed with the orientation as in \eqref{intro choice of ori}. Then
\begin{align*}
P_{n,\beta}([\mathrm{pt}])&=(-1)^{\beta \cdot L_2+n} \int_{S^{[m]}\times \mathbb{P}^{\chi(\beta)-1}} c_{m}(\oO_S(\beta)^{[m]}(1)) \, \frac{h^n(1+h)^{\chi(L_1(\beta))} (1-h)^{\chi(L_2(\beta))} \, c(T_{S^{[m]}}(L_1))}{c(L_1(\beta)^{[m]} (1))\cdot c((L_2(\beta)^{[m]}(1))^{\vee})},
\end{align*}
when $\beta^2 \geqslant 0$. Here $m := n + g(\beta) - 1$ and $h := c_1(\oO(1))$. Moreover, $P_{n,\beta}([\mathrm{pt}]) = 0$ when $\beta^2<0$.
\end{thm}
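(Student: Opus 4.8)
The plan is to transfer the computation off the non-compact $4$-fold $X$ onto the surface $S$, and then onto the smooth ambient space $A := S^{[m]}\times\mathbb{P}^{\chi(\beta)-1}$, on which every class occurring in the statement is tautological. First I would combine the hypothesis $P_n(S,\beta)\cong P_n(X,\beta)$ with \eqref{intro choice of ori} to rewrite
\[
P_{n,\beta}([\mathrm{pt}]) = (-1)^{\beta\cdot L_2+n}\int_{[P_n(S,\beta)]^{\mathrm{vir}}} e\big(-\dR\hH om_{\pi_{P_{S}}}(\mathbb{F},\mathbb{F}\boxtimes L_1)\big)\cup\tau([\mathrm{pt}])^n,
\]
so the entire problem is now phrased on $S$, and the sign $(-1)^{\beta\cdot L_2+n}$ is carried along unchanged. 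It then remains to convert each of the three factors (insertion, virtual class, Euler class) into tautological data on $A$.

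Two identifications are preliminary. Since $[\mathrm{pt}]$ is pulled back from $S$ and $\mathbb{F}$ is scheme-theoretically supported on the zero section, a direct Chern-character computation using $\iota_*$ and the projection formula shows that $\tau([\mathrm{pt}])$ equals the hyperplane class $h = c_1(\oO(1))$ on $\mathbb{P}^{\chi(\beta)-1}$; geometrically this is the condition that the support curve passes through a fixed point, a hyperplane in $|\beta|$, so $\tau([\mathrm{pt}])^n = h^n$. Next, by \cite[Prop.~B.8]{PT3} we have $P_n(S,\beta)\cong\Hilb^m(\cC/|\beta|)$, and by Section~\ref{sec:relHilb} this relative Hilbert scheme is the zero locus of a tautological section of the rank-$m$ bundle $\oO_S(\beta)^{[m]}(1)$ on $A$. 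Matching the pairs obstruction theory on $S$ with the one induced by this section (both have virtual dimension $\chi(\mathbb{I}^{\bullet},F) = \chi(F)-\chi(F,F) = n+\beta^2$, using $b_1(S)=p_g(S)=0$) then gives $[P_n(S,\beta)]^{\mathrm{vir}} = c_m(\oO_S(\beta)^{[m]}(1))\cap[A]$, which produces the first factor of the formula.

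The heart of the proof is the computation of $e(-\dR\hH om_{\pi_{P_{S}}}(\mathbb{F},\mathbb{F}\boxtimes L_1))$ in tautological terms. For this I would resolve the universal sheaf: writing $\dD$ for the universal curve (cut out by $\oO_S(\beta)\boxtimes\oO(1)$) and $\zZ$ for the universal length-$m$ subscheme, the universal pair gives $0\to\oO_S(-\beta)\boxtimes\oO(-1)\to\oO\to\oO_{\dD}\to0$ and $0\to\oO_{\dD}\to\mathbb{F}\to\oO_{\zZ}\to0$, hence $[\mathbb{F}]=[\oO_{\dD}]+[\oO_{\zZ}]$ in $K$-theory (this is insensitive to any base twist, since $\dR\hH om$ is). Expanding the Euler form $-[\mathbb{F}]^{\vee}[\mathbb{F}][L_1]$ and pushing to $A$, the $\oO_{\zZ}$-with-$\oO_{\zZ}$ term reassembles, via $0\to\iI\to\oO\to\oO_{\zZ}\to0$ on $S\times S^{[m]}$, into $\dR\hH om_{\pi}(\iI,\iI\boxtimes L_1)$ and hence by \eqref{twisted tangent bundle} into $c(T_{S^{[m]}}(L_1))$; the purely line-bundle terms push to $\dR\Gamma(S,L_1(\pm\beta))\otimes\oO(\pm1)$, yielding $(1+h)^{\chi(L_1(\beta))}$ and $(1-h)^{\chi(L_1(-\beta))}=(1-h)^{\chi(L_2(\beta))}$ via Serre duality $\chi(L_1(-\beta))=\chi(K_S\otimes L_1^{-1}(\beta))=\chi(L_2(\beta))$; and the cross terms between $\oO_{\zZ}$ and $\oO_{\dD}$ push, using $\pi_*(\oO_{\zZ}\otimes L_1(\beta)\boxtimes\oO(1))=L_1(\beta)^{[m]}(1)$ and one application of relative Serre duality on $S$, to $L_1(\beta)^{[m]}(1)$ and $(L_2(\beta)^{[m]}(1))^{\vee}$ in the denominator. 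Assembling these with $\tau([\mathrm{pt}])^n=h^n$ and $[P_n(S,\beta)]^{\mathrm{vir}}=c_m(\oO_S(\beta)^{[m]}(1))\cap[A]$ reproduces the stated integrand on $A$.

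The main obstacle is precisely this last computation: tracking all twists by $\oO(\pm1)$, correctly invoking Serre duality both to convert $\chi(L_1(-\beta))$ into $\chi(L_2(\beta))$ and to turn one cross term into the dualized bundle $(L_2(\beta)^{[m]}(1))^{\vee}$, and matching every sign so the answer is exactly the displayed ratio of total Chern classes. Finally, the vanishing for $\beta^2<0$ is immediate in this framework: the virtual bundle $-\dR\hH om_{\pi_{P_{S}}}(\mathbb{F},\mathbb{F}\boxtimes L_1)$ has rank $-\chi(F,F\otimes L_1)=\beta^2$, so when $\beta^2<0$ its Euler class lies in $H^{2\beta^2}=0$ and vanishes, forcing $P_{n,\beta}([\mathrm{pt}])=0$. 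This is consistent with a degree count on $A$: when $\beta^2<0$ one has $\dim_{\mathbb{C}}A=2m+\chi(\beta)-1=m+n+\beta^2<m+n$, while the integrand $c_m(\oO_S(\beta)^{[m]}(1))\,h^n\cdot(\cdots)$ has minimal degree $m+n$, leaving no component of top degree.
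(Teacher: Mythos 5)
Your overall architecture --- reduce to $S$ via \eqref{intro choice of ori}, identify $\tau([\pt])=h$, realize $P_n(S,\beta)$ as a tautological zero locus over $|\beta|$, and expand the K-theory class of $-\dR\hH om_{\pi_{P_S}}(\mathbb{F},\mathbb{F}\boxtimes L_1)$ in tautological terms --- is exactly the paper's, and your treatment of the insertion and of the $\beta^2<0$ vanishing is correct. But the central K-theoretic step contains a genuine error: the cokernel of the universal section $\oO_{\cC}\to\mathbb{F}$ is \emph{not} $\oO_{\zZ}$. By the Pandharipande--Thomas analysis, the section dualizes to $F^*=I_{Z\subset C}\hookrightarrow\oO_C$, and the cokernel of $\oO_C\to F=\hH om_C(I_{Z\subset C},\oO_C)$ is $\eE xt^1_C(\oO_Z,\oO_C)$, whose K-theory class is $\oO_Z^{\vee}\otimes\oO(-C)$, not $\oO_Z$ (in the family these differ already at the level of $\ch_3$). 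Equivalently, the correct input is $(\mathbb{I}_S^\mdot)^{\vee}\cong\iI\boxtimes\oO_S(\beta)\boxtimes\oO(1)$ from \cite[Lem.~A.4]{KT1}, which gives $[\mathbb{F}]=[\oO_{\cC}]+[\oO_{\zZ}^{\vee}\otimes\oO(-\cC)]$; this is what the paper uses.

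This is not cosmetic. Running your expansion with $[\mathbb{F}]=[\oO_{\cC}]+[\oO_{\zZ}]$, the cross terms come out as $+L_1(\beta)^{[m]}(1)-L_1^{[m]}$ and $+(L_2(\beta)^{[m]}(1))^{\vee}-(L_2^{[m]})^{\vee}$: the twisted tautological bundles acquire the \emph{wrong sign} (they would sit in the numerator of the Chern class expression rather than the denominator), and spurious untwisted tautological bundles survive since the $\zZ$--$\zZ$ block contributes $T_{S^{[m]}}(L_1)-L_1^{[m]}-(L_2^{[m]})^{\vee}$. With the correct class $[\oO_{\zZ}^{\vee}(-\cC)]$ the cross terms instead give $-L_1(\beta)^{[m]}(1)+L_1^{[m]}$ and $-(L_2(\beta)^{[m]}(1))^{\vee}+(L_2^{[m]})^{\vee}$, the untwisted pieces cancel, and one lands exactly on \eqref{K group class} and hence on the stated formula. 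So your asserted conclusion does not follow from your intermediate step; the fix is to replace the na\"ive resolution of $\mathbb{F}$ by the duality statement for $(\mathbb{I}_S^\mdot)^{\vee}$. Two smaller gaps: matching virtual dimensions does not identify $[P_n(S,\beta)]^{\mathrm{vir}}$ with the zero-locus class (one needs \cite[Prop.~2.1]{Koo} as in Section \ref{sec:relHilb}), and the passage from $|\beta|$ to the virtual ambient space $\mathbb{P}^{\chi(\beta)-1}$ rests on $[H_\beta]^{\mathrm{vir}}=h^{h^1(\oO(\beta))}\cap|\beta|$, which you should state.
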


The main assumption in this theorem is $P_n(X,\beta) \cong P_n(S,\beta)$. For $(S,L_1,L_2)$ with $S$ minimal and toric, $L_1 \otimes L_2 \cong K_S$ with $L_1^{-1}, L_2^{-1}$ non-trivial and nef, we classify all cases for which $n \geqslant 0$,  $P_n(X,\beta) \cong P_n(S,\beta)$, and $P_n(S,\beta)$ is non-empty (Proposition \ref{stable pair on del pezzo}, Remark \ref{CKK for nef cases}).
Note that $P_n(X,\beta) \cong P_n(S,\beta)$ more or less forces $p_g(S) = 0$, because as soon as $L_1$ or $L_2$ has non-zero sections this isomorphism does not hold. See Remark \ref{rmk on b1>0} for 
an extension to the case $b_1(S)>0$.

\subsection{Verifications}

In this paper, we apply Theorem \ref{mainthm intro} to examples for which $S$ is in addition toric\,\footnote{To our knowledge, all local surfaces which are Calabi-Yau 4-folds and for which Gopakumar-Vafa type invariants have been calculated so far are toric.}.
Then the integrals on $S^{[m]}$ of Theorem \ref{mainthm intro} can be calculated using Atiyah-Bott localization for the lift of the 2-dimensional torus action from $S$ to $S^{[m]}$ as described in Section \ref{sec:torusloc}. This leads to the tables for stable pair invariants in Appendix \ref{tables}.

Denote by $[H] \in H_2(\mathbb{P}^2, \mathbb{Z})$ the class of a line and let $[H_1], [H_2] \in H_2(\mathbb{P}^1 \times \mathbb{P}^1,\mathbb{Z})$ be as in Example \ref{P1xP1 cpt/noncpt}. In \cite[Sect.~3]{KP}, Klemm-Pandharipande determined the Gromov-Witten invariants of $X = \mathrm{Tot}_S(L_1 \oplus L_2)$ 
for $(S,L_1, L_2) = (\mathbb{P}^2,\oO(-1), \oO(-2))$ and $(\mathbb{P}^1 \times \mathbb{P}^1,\oO(-1,-1), \oO(-1,-1))$. They tabulated the corresponding values of the Gopakumar-Vafa type invariants for $\beta = d[H]$ with $d \leqslant 10$ resp.~$\beta = d_1[H_1]+d_2[H_2]$ with $d_1, d_2 \leqslant 6$. 
Combining their calculations and the tables in Appendix \ref{tables}, we deduce the following:
\begin{cor} \label{main cor 1}
In the following cases, Conjectures \ref{conj:GW/GV g=0} and \ref{conj:GW/GV g=1} are true for $X = \mathrm{Tot}_S(L_1 \oplus L_2)$.
\begin{itemize}
\item $(S,L_1,L_2) = (\mathbb{P}^2,\oO(-1),\oO(-2))$, $d=1$, and any $n \geqslant 0$.
\item $(S,L_1,L_2) = (\mathbb{P}^2,\oO(-1),\oO(-2))$, $d=2,3,4$, and $n=0,1$.
\item $(S,L_1,L_2) = (\mathbb{P}^2,\oO(-1),\oO(-2))$, $d=2,3$, and $n=2$.
\item $(S,L_1,L_2) = (\mathbb{P}^1 \times \mathbb{P}^1,\oO(-1,-1), \oO(-1,-1))$, $(d_1,d_2)=(1,0),(0,1),(1,1)$, any $n \geqslant 0$.
\item $(S,L_1,L_2) = (\mathbb{P}^1 \times \mathbb{P}^1,\oO(-1,-1), \oO(-1,-1))$, $(d_1,d_2)=(0,d), (d,0)$ with $d \geqslant 2$, and $0\leqslant n\leqslant d$.
\item $(S,L_1,L_2) = (\mathbb{P}^1 \times \mathbb{P}^1,\oO(-1,-1), \oO(-1,-1))$, $(d_1,d_2)=(1,d), (d,1)$ with $d \geqslant 2$, and $n=0,1,2$.
\item $(S,L_1,L_2) = (\mathbb{P}^1 \times \mathbb{P}^1,\oO(-1,-1), \oO(-1,-1))$, $(d_1,d_2)=(2,2)$, $(2,3)$, $(3,2)$, $(2,4)$, $(4,2)$, $(3,3)$, and $n=0$.
\item $(S,L_1,L_2) = (\mathbb{P}^1 \times \mathbb{P}^1,\oO(-1,-1), \oO(-1,-1))$, $(d_1,d_2)=(2,2), (2,3), (3,2)$, and $n=1$.
\item $(S,L_1,L_2) = (\mathbb{P}^1 \times \mathbb{P}^1,\oO(-1,-1), \oO(-1,-1))$, $(d_1,d_2)=(2,2)$, and $n=2$.
\end{itemize}
\end{cor}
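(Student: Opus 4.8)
The plan is to treat Corollary \ref{main cor 1} as a case-by-case verification: for each triple $(S,L_1,L_2)$, curve class $\beta$, and $n$ in the list, I would compute the left-hand side $P_{n,\beta}([\mathrm{pt}])$ appearing in Conjectures \ref{conj:GW/GV g=0} and \ref{conj:GW/GV g=1} directly, independently assemble the corresponding right-hand side out of the Gopakumar-Vafa type invariants $n_{0,\beta}([\mathrm{pt}])$ and $n_{1,\beta}$ computed by Klemm-Pandharipande \cite{KP}, and conclude by checking that the two sides agree.

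For the left-hand side, first I would confirm that the hypotheses of Theorem \ref{mainthm intro} are met in each listed case. Both $(\mathbb{P}^2, \oO(-1), \oO(-2))$ and $(\mathbb{P}^1 \times \mathbb{P}^1, \oO(-1,-1), \oO(-1,-1))$ satisfy $b_1(S) = p_g(S) = 0$ and $L_1 \otimes L_2 \cong K_S$, the moduli spaces $P_n(X,\beta)$ are projective by Proposition \ref{proper crit 1}, and the key isomorphism $P_n(X,\beta) \cong P_n(S,\beta)$ holds for the listed $(\beta,n)$ by the classification underlying Proposition \ref{stable pair on del pezzo}. Since $\beta^2 = d^2 \geqslant 0$ respectively $\beta^2 = 2 d_1 d_2 \geqslant 0$ for every class in the list, Theorem \ref{mainthm intro} applies and expresses $P_{n,\beta}([\mathrm{pt}])$ as an explicit intersection number on $S^{[m]} \times \mathbb{P}^{\chi(\beta)-1}$ with $m = n + g(\beta)-1$. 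Because $S$ is toric, I would lift the two-dimensional torus action to $S^{[m]}$ and evaluate this integral by Atiyah-Bott localization as set up in Section \ref{sec:torusloc}; summing the fixed-point contributions produces the numerical values recorded in Appendix \ref{tables}.

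For the right-hand side, I would read off from \cite[Sect.~3]{KP} the values $n_{0,\beta}([\mathrm{pt}])$ and $n_{1,\beta}$ for $\beta = d[H]$ with $d \leqslant 10$, respectively $\beta = d_1[H_1] + d_2[H_2]$ with $d_1,d_2 \leqslant 6$, which covers every class appearing in the statement. For the cases with $n \geqslant 1$ I would substitute these genus zero invariants together with the already-computed $P_{0,\beta_0}$ for effective subclasses $\beta_0 \leqslant \beta$ into the effective-decomposition sum on the right of Conjecture \ref{conj:GW/GV g=0}, and compare the result with the tabulated $P_{n,\beta}([\mathrm{pt}])$. For Conjecture \ref{conj:GW/GV g=1} I would expand the MacMahon product $\prod_{\beta>0} M(q^\beta)^{n_{1,\beta}}$ to the relevant order in $q$ using the Klemm-Pandharipande values of $n_{1,\beta}$, and match its coefficients against the computed $P_{0,\beta}$.

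The genuine obstacle is computational rather than conceptual: the localization integral on $S^{[m]}$ is a sum over $T$-fixed length-$m$ subschemes whose number grows rapidly with $m = n + g(\beta) - 1$, so increasing either the degree of $\beta$ or the value of $n$ quickly makes the Hilbert-scheme computation intractable. This is precisely what pins down the exact ranges of $(d,n)$ and $(d_1,d_2,n)$ in the statement. A secondary point deserving care is confirming the scheme-theoretic support hypothesis $P_n(X,\beta) \cong P_n(S,\beta)$ at the boundary of the listed range, since once it fails the formula of Theorem \ref{mainthm intro} no longer computes the full invariant and the comparison would break down.
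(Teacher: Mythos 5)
Your proposal is correct and follows essentially the same route as the paper: confirm $P_n(X,\beta)\cong P_n(S,\beta)$ via Proposition \ref{stable pair on del pezzo}, apply Theorem \ref{mainthm} and Atiyah--Bott localization on $S^{[m]}\times\mathbb{P}^{\chi(\beta)-1}$ to produce the values tabulated in Appendix \ref{tables}, and match them against the Klemm--Pandharipande invariants through the two conjectural formulas. One small correction of emphasis: the exact ranges of $(d,n)$ and $(d_1,d_2,n)$ in the statement are not dictated by computational tractability of the localization sum, but are precisely the complete classification of cases with $n\geqslant 0$ and $P_n(X,\beta)\cong P_n(S,\beta)$ established in Proposition \ref{stable pair on del pezzo}; outside those ranges the hypothesis of Theorem \ref{mainthm} fails and the formula no longer computes the full invariant, as you correctly note at the end.
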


\begin{rmk}
In all these cases $P_n(X,\beta) \cong P_n(S,\beta)$. In fact, these are \emph{all} $(S,L_1,L_2)$ with $L_1 \otimes L_2 \cong K_S$ for which $L_1^{-1}, L_2^{-1}$ are ample, $n \geqslant 0$, and $P_n(X,\beta) \cong P_n(S,\beta)$ by Propositions  \ref{CMT lemma} and \ref{stable pair on del pezzo}. Calculations based on Theorem \ref{mainthm intro} are often more efficient than the vertex formalism \cite{CK2, CKM}. For instance, for $(S,L_1,L_2) = (\mathbb{P}^1 \times \mathbb{P}^1, \oO(-1,-1), \oO(-1,-1))$, $(d_1,d_2)=(2,4)$ and $n=0$, $P_n(X,\beta)$ has $182$ $T$-fixed points, whereas Theorem \ref{mainthm intro} only involves an integral over $S^{[2]} \times \mathbb{P}^{14}$. 
\end{rmk}

Bousseau-Brini-van Garrel \cite{BBG} recently determined the genus zero Gromov-Witten (and hence Gopakumar-Vafa type) invariants of several local surfaces for their verifications of the log-local principle conjectured in general in \cite{GGR}. Combining their numbers with the tables for stable pair invariants in Appendix \ref{tables} allows us to provide some further verifications of Conjecture \ref{conj:GW/GV g=0} as we will now describe.
For any $a \geqslant 1$, consider the Hirzebruch surface
$$
\mathbb{F}_a = \mathbb{P}(\oO_{\mathbb{P}^1} \oplus \oO_{\mathbb{P}^1}(a)).
$$
We denote by $[F]$ the class of a fibre and by $[B]$ the class of the unique section satisfying $B^2 = -a$. We write $\oO(m,n):=\oO(m B + n F)$ and consider curve classes $\beta := d_1 [B] + d_2 [F]$, $d_1,d_2 \geqslant 0$. 
\begin{cor} \label{main cor 2}
In the following cases, Conjecture \ref{conj:GW/GV g=0} is true for $X = \mathrm{Tot}_S(L_1 \oplus L_2)$. 
\begin{itemize}
\item $(S,L_1,L_2) = (\mathbb{P}^1 \times \mathbb{P}^1,\oO(-1,0), \oO(-1,-2))$, $(d_1,d_2)=(0,1)$, and any $n \geqslant 1$.
\item $(S,L_1,L_2) = (\mathbb{P}^1 \times \mathbb{P}^1,\oO(-1,0), \oO(-1,-2))$, $(d_1,d_2)=(0,d)$ with $d \geqslant 2$, and $n=d$.
\item $(S,L_1,L_2) = (\mathbb{P}^1 \times \mathbb{P}^1,\oO(-1,0), \oO(-1,-2))$, $(d_1,d_2)=(2,2),(2,3),(1,d),(d,1)$ with $d \geqslant 1$, and $n=1$.
\item $(S,L_1,L_2) = (\mathbb{P}^1 \times \mathbb{P}^1,\oO(-1,0), \oO(-1,-2))$, $(d_1,d_2)=(1,d)$ with $d \geqslant 2$, and $n=2$.
\item $(S,L_1,L_2) = (\mathbb{F}_1,\oO(-1,-1), \oO(-1,-2))$, $(d_1,d_2)=(0,1)$, and any $n \geqslant 1$.
\item $(S,L_1,L_2) = (\mathbb{F}_1,\oO(-1,-1), \oO(-1,-2))$, $(d_1,d_2)=(0,d)$ with $d \geqslant 2$, and $n=d$.
\item $(S,L_1,L_2) = (\mathbb{F}_1,\oO(-1,-1), \oO(-1,-2))$, $(d_1,d_2)=(2,2),(2,3),(2,4),(1,d)$ with $d \geqslant 1$, and $n=1$.
\item $(S,L_1,L_2) = (\mathbb{F}_1,\oO(-1,-1), \oO(-1,-2))$, $(d_1,d_2)=(1,d)$ with $d \geqslant 2$, and $n=2$.
\item $(S,L_1,L_2) = (\mathbb{F}_1,\oO(0,-1), \oO(-2,-2))$, $(d_1,d_2)=(2,2),(2,3),(1,d)$ with $d \geqslant 1$, $n=1$.
\item $(S,L_1,L_2) = (\mathbb{F}_2,\oO(-1,-2), \oO(-1,-2))$, $(d_1,d_2)=(0,1)$, and any $n \geqslant 1$.
\item $(S,L_1,L_2) = (\mathbb{F}_2,\oO(-1,-2), \oO(-1,-2))$, $(d_1,d_2)=(0,d)$ with $d \geqslant 2$, and $n=d$.
\item $(S,L_1,L_2) = (\mathbb{F}_2,\oO(-1,-2), \oO(-1,-2))$, $(d_1,d_2)=(2,3),(2,4),(2,5),(1,d)$ with $d \geqslant 1$, and $n=1$.
\item $(S,L_1,L_2) = (\mathbb{F}_2,\oO(-1,-2), \oO(-1,-2))$, $(d_1,d_2)=(1,d)$ with $d \geqslant 2$, and $n=2$.
\end{itemize}
\end{cor}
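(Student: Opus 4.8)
The plan is to verify the numerical identity of Conjecture \ref{conj:GW/GV g=0} directly, separately in each listed triple $(S,L_1,L_2)$, class $\beta$, and integer $n$, by computing the two sides independently and comparing: the left-hand side $P_{n,\beta}([\mathrm{pt}])$ via Theorem \ref{mainthm intro}, and the right-hand side from the known genus-zero Gopakumar-Vafa numbers of Bousseau-Brini-van Garrel \cite{BBG} together with the degree-zero pair invariants $P_{0,\beta_0}$. First I would reduce to the single insertion $\gamma=[\mathrm{pt}]$. Since $X=\mathrm{Tot}_S(L_1\oplus L_2)$ deformation retracts onto its zero section $S$, we have $H^4(X,\mathbb{Z})\cong H^4(S,\mathbb{Z})=\mathbb{Z}\cdot[\mathrm{pt}]$. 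Both sides of the conjecture are homogeneous of degree $n$ in $\gamma$ (the left because $\tau$ is linear and the insertion is $\tau(\gamma)^n$, the right because each factor $n_{0,\beta_i}(\gamma)$ is linear), so establishing the identity for $\gamma=[\mathrm{pt}]$ settles it for all $\gamma\in H^4(X,\mathbb{Z})$. I would also record that in every listed case $P_n(X,\beta)\cong P_n(S,\beta)$, the hypothesis of Theorem \ref{mainthm intro}; for these $\mathbb{P}^1\times\mathbb{P}^1$ and Hirzebruch geometries this is checked exactly as in Example \ref{P1xP1 cpt/noncpt} and by the support/properness criteria of Propositions \ref{proper crit 1}, \ref{CMT lemma}, \ref{stable pair on del pezzo}.

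With Theorem \ref{mainthm intro} in force, I would compute the left-hand side. Each of the four surfaces is toric, so the integral of the theorem over $S^{[m]}\times\mathbb{P}^{\chi(\beta)-1}$, with $m=n+g(\beta)-1$, is evaluated by lifting the torus action from $S$ to $S^{[m]}$ and to the ambient product and applying Atiyah-Bott localization as in Section \ref{sec:torusloc}; for the finitely many explicit triples the resulting integers are those collected in Appendix \ref{tables}. For the right-hand side I would enumerate the effective decompositions $\beta_0+\beta_1+\cdots+\beta_n=\beta$; because $n_{0,0}([\mathrm{pt}])=0$, only decompositions with $\beta_1,\ldots,\beta_n$ all nonzero survive, leaving a short explicit list in each case. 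Into these I would substitute the genus-zero Gopakumar-Vafa invariants $n_{0,\beta_i}([\mathrm{pt}])$ read off from \cite{BBG} and the values $P_{0,\beta_0}$ (with $P_{0,0}=1$) taken from the $n=0$ rows of Appendix \ref{tables}, and match against the left-hand side.

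The substantive difficulty is the families indexed by arbitrarily large $n$: namely $\beta=[F]$ for any $n\geqslant 1$, and $\beta=d[F]$ with $n=d$. A case-by-case localization cannot handle these, so I would instead evaluate the integral of Theorem \ref{mainthm intro} in closed form, exploiting that for the fibre class $g([F])=0$ (so $m=n-1$) with $\chi([F])=2$, and that for $\beta=d[F]$ the genus is $g(d[F])=1-d$ so that $n=d$ forces $m=0$ and reduces the integral to one over $\mathbb{P}^{\chi(d[F])-1}$. On the conjecture side, since $[F]$ is primitive its only effective sub-decompositions into nonzero pieces are multiples of $[F]$: for $\beta=[F]$ and $n=1$ the right-hand side collapses to $n_{0,[F]}([\mathrm{pt}])$ while for $n\geqslant 2$ no admissible decomposition exists and it vanishes; for $\beta=d[F]$ and $n=d$ the unique admissible decomposition is $\beta_1=\cdots=\beta_d=[F]$, $\beta_0=0$, giving right-hand side $n_{0,[F]}([\mathrm{pt}])^{d}$.

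Hence the hard part will be to prove that these closed-form left-hand sides reproduce exactly the predicted behaviour: that the integral over $S^{[n-1]}\times\mathbb{P}^{1}$ equals $n_{0,[F]}([\mathrm{pt}])$ at $n=1$ and vanishes for $n\geqslant 2$, and that the zero-dimensional ($m=0$) integral over $\mathbb{P}^{\chi(d[F])-1}$ equals the $d$-th power $n_{0,[F]}([\mathrm{pt}])^{d}$. Establishing these uniform-in-$n$ evaluations of the tautological integral, rather than the individual finite comparisons, is where the real work lies; a secondary care-point is fixing the correct normalization when importing the $n_{0,\beta}([\mathrm{pt}])$ from \cite{BBG} into our point-insertion convention, which I would pin down by cross-checking the already-verified small cases in Appendix \ref{tables}.
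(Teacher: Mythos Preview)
Your overall strategy coincides with the paper's: compute the left-hand side via Theorem~\ref{mainthm intro} and Atiyah--Bott localization (the tables in Appendix~\ref{tables}), compute the right-hand side by plugging in the genus zero Gopakumar--Vafa numbers from \cite{BBG} together with the $P_{0,\beta_0}$, and compare term by term. The reduction to $\gamma=[\mathrm{pt}]$ is correct and harmless.

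There is, however, an oversight in your identification of the infinite families. You single out only $\beta=[F]$ for all $n\geqslant 1$ and $\beta=d[F]$ with $n=d$, but the statement also contains the families $(d_1,d_2)=(1,d),(d,1)$ with $n=1$ and $(d_1,d_2)=(1,d)$ with $n=2$ (and their $\mathbb{F}_1$, $\mathbb{F}_2$ analogues), all indexed by an unbounded $d$. For these one has $g(\beta)=0$, hence $m=n-1\in\{0,1\}$, but $\chi(\beta)$ grows linearly in $d$, so the integral in Theorem~\ref{mainthm intro} is over a projective space of growing dimension and likewise requires a uniform closed-form evaluation (the paper records the outcomes $P_{1,(1,d)}([\mathrm{pt}])=P_{1,(d,1)}([\mathrm{pt}])=\pm 1$ and $P_{2,(1,d)}([\mathrm{pt}])=\pm 2$ in Appendix~\ref{tables}). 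On the conjecture side, you must also argue uniformly that $P_{0,\beta_0}=0$ for every nonzero $\beta_0\leqslant\beta$ in these classes (so that the right-hand side collapses to a short expression); this follows because the relevant moduli spaces are empty, but it should be stated.

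A smaller point: the isomorphism $P_n(X,\beta)\cong P_n(S,\beta)$ in the nef, non-ample cases treated here is established in Remark~\ref{CKK for nef cases} (extending the filtration argument of Proposition~\ref{stable pair on del pezzo}), not by Propositions~\ref{CMT lemma} or~\ref{stable pair on del pezzo} themselves, which concern only the ample situation.
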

In all these cases $P_n(X,\beta) \cong P_n(S,\beta)$ and $n>0$. Since Bousseau-Brini-van Garrel only determined the \emph{genus zero} Gopakumar-Vafa type invariants for the above geometries, we can only verify Conjecture \ref{conj:GW/GV g=0} in these cases. In fact, in Proposition \ref{stable pair on del pezzo} and Remark \ref{CKK for nef cases}, we classify \emph{all} cases $(S,L_1,L_2)$ such that $S$ is minimal toric, $L_1 \otimes L_2 \cong K_S$, $L_1^{-1}, L_2^{-1}$ are non-trivial and nef, $n \geqslant 0$, $P_n(X,\beta) \cong P_n(S,\beta)$, and $P_n(S,\beta)$ is non-empty. Using Theorem \ref{mainthm intro}, we determined the stable pair invariants in all these cases, \emph{including} the $n=0$ case (see Appendix \ref{tables}).

\begin{rmk}
For all calculations done in Appendix \ref{tables} for which $P_n(X,\beta) \cong P_n(S,\beta)$ and the invariant is non-zero, we have
\begin{equation} \label{COnumbers}
P_{n,\beta}([\pt]) = \pm \int_{S^{[m]}} e(T_{S^{[m]}}(L_1)).
\end{equation}
These numbers were calculated by Carlsson-Okounkov \cite{CO} and are determined by the formula
$$
\sum_{m=0}^{\infty} q^m \int_{S^{[m]}} e(T_{S^{[m]}}(L_1)) = \prod_{m=1}^{\infty} (1-q^m)^{-c_2(T_S \otimes L_1)},
$$
where $c_2(T_S \otimes L_1) = c_2(S) - L_1L_2$. We do not know whether \eqref{COnumbers} is a mere coincidence. 
\end{rmk}

\subsection{Vertex calculations}

Although most calculations in this paper are based on Theorem \ref{mainthm intro}, we also did some computations using the vertex formalism. 
\begin{prop} \label{prop intro}
For the following cases, Conjectures \ref{conj:GW/GV g=0} and \ref{conj:GW/GV g=1} are true.
 \begin{itemize}
 \item $X =\mathrm{Tot}_{\mathbb{P}^3}(K_{\mathbb{P}^3})$, $d=1$, and any $n \geqslant 0$.
\item $X =\mathrm{Tot}_{\mathbb{P}^3}(K_{\mathbb{P}^3})$, $d=2,3$, and $n=0,1$.
\item $X =\mathrm{Tot}_{\mathbb{P}^3}(K_{\mathbb{P}^3})$, $d=2$, and $n=2$.
\end{itemize}
For the following cases Conjecture \ref{conj:GW/GV g=0} is true for $X = \mathrm{Tot}_S(L_1 \oplus L_2)$. 
\begin{itemize}
 \item $(S,L_1,L_2) = (\mathbb{P}^2,\oO(-1),\oO(-2))$, $d=2$, and $n=3$. 
 \item $(S,L_1,L_2) = (\mathbb{P}^1 \times \mathbb{P}^1,\oO(-1,-1), \oO(-1,-1))$, $(d_1,d_2)=(0,2), (2,0), (1,2), (2,1)$, and $n=3$.
 \item $(S,L_1,L_2) = (\mathbb{F}_1,\oO(-1,-1), \oO(-1,-2))$, $(d_1,d_2)=(0,2)$, and $n=3$.
 \end{itemize}
\end{prop}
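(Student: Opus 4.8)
The plan is to verify, case by case, the relevant conjecture by computing both sides directly and checking equality. These are precisely the cases in which thickened (non-reduced) curves contribute, or in which there is no zero-section surface at all (local $\mathbb{P}^3$), so the hypothesis $P_n(X,\beta) \cong P_n(S,\beta)$ of Theorem \ref{mainthm intro} fails and one cannot reduce the computation to intersection numbers on Hilbert schemes of points. Instead I would fall back on the equivariant vertex formalism for stable pairs on toric Calabi--Yau $4$-folds developed in \cite{CK2, CKM}. The first step is to fix the $3$-dimensional subtorus $T \subseteq (\mathbb{C}^*)^4$ preserving the Calabi--Yau volume form and to invoke the fact \cite[Sect.~2.2]{CK2} that $P_n(X,\beta)^T$ is a finite set of isolated reduced points, each a $T$-fixed stable pair described combinatorially by labelled box configurations supported on the torus-invariant lines (and, for the thickened classes, carrying extra boxes off the supporting edges).

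The left-hand side $P_{n,\beta}(\gamma)$ is then evaluated by the Oh--Thomas/Borisov--Joyce virtual localization formula in the form established for this setting in \cite{CK2, CKM}. Concretely, for each fixed point $Z \in P_n(X,\beta)^T$ I would decompose the virtual tangent space $T^{\mathrm{vir}}_Z = \RHom_X(I^{\bullet}, I^{\bullet})_0[1]$ as a $T$-representation, note that its $T$-fixed part vanishes because $Z$ is isolated and reduced, and form the square-root Euler class $\sqrt{e}$ of the moving part, choosing at each fixed point the sign dictated by a global orientation. The equivariant insertion $\tau(\gamma)^n$ must likewise be restricted to $Z$. Summing the resulting rational functions over all $Z$ yields a sum that is independent of the equivariant parameters and equals $P_{n,\beta}(\gamma)$; this is the same type of computation tabulated in Appendix \ref{tables}, now carried out for the thickened classes and for $X = \mathrm{Tot}_{\mathbb{P}^3}(K_{\mathbb{P}^3})$.

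For the right-hand sides I would assemble the Gopakumar--Vafa inputs. For Conjecture \ref{conj:GW/GV g=0} I would compute $\sum_{\beta_0 + \cdots + \beta_n = \beta} P_{0,\beta_0}\prod_{i=1}^n n_{0,\beta_i}(\gamma)$, using the genus-zero invariants $n_{0,\beta}$ from the Gromov--Witten/Gopakumar--Vafa computations of Klemm--Pandharipande \cite{KP} together with the already-determined values $P_{0,\beta_0}$; the sum over effective decompositions is what makes the higher $n$ cases ($n=2,3$) more involved. For Conjecture \ref{conj:GW/GV g=1} I would expand $\prod_{\beta>0} M(q^{\beta})^{n_{1,\beta}}$ to the relevant order in $q^{\beta}$ and read off the coefficients, again using the genus-one invariants $n_{1,\beta}$ from \cite{KP}. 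Matching these finitely many numbers against the localization output then closes each case.

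The main obstacle is the bookkeeping intrinsic to Calabi--Yau $4$-fold localization. The combinatorial enumeration of $T$-fixed stable pairs grows rapidly with the degree, and, more subtly, the square-root Euler class is only well-defined up to a sign at each fixed point, so one must pin down a consistent choice of orientation: matching the sign $(-1)^{\beta \cdot L_2 + n}$ of \eqref{intro choice of ori} wherever a zero-section comparison is available, and otherwise fixing it so that the resulting $n=0$ series is compatible with Conjecture \ref{conj:GW/GV g=1}. Keeping these signs coherent across all fixed points, while controlling the embedded-point (thickened-curve) contributions that are exactly what places these cases outside the scope of Theorem \ref{mainthm intro}, is where the genuine care lies; the remaining algebra is a finite, if lengthy, equivariant computation.
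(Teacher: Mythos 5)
Your overall strategy — enumerate the $T$-fixed stable pairs combinatorially, evaluate the localization contributions with a consistent orientation, and match against the Gopakumar--Vafa inputs of \cite{KP} — is exactly the computational route the paper takes; the verification of Conjectures \ref{conj:GW/GV g=0} and \ref{conj:GW/GV g=1} is in the end a finite numerical check against the tables.

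There is, however, one genuine gap in your justification of the left-hand side. You assert that the square-root-Euler-class localization sum ``equals $P_{n,\beta}(\gamma)$,'' i.e.\ the globally defined invariant \eqref{inv cpt case}. At the time of writing, no virtual localization formula for the Borisov--Joyce class on a Calabi--Yau $4$-fold was available in the generality you invoke (Oh--Thomas \cite{OT} had only announced one), so this step cannot be taken for granted. The paper circumvents this as follows: for $X=\mathrm{Tot}_{\mathbb{P}^3}(K_{\mathbb{P}^3})$ one first proves $P_n(X,\beta)\cong P_n(\mathbb{P}^3,\beta)$ (a filtration argument as in Proposition \ref{stable pair on del pezzo}, using that degree $2$ Cohen--Macaulay curves in $\mathbb{P}^3$ have $\chi(\oO_C)\geqslant 1$), deduces $[P_n(X,\beta)]^{\mathrm{vir}}=(-1)^{\beta\cdot c_1(\mathbb{P}^3)+n}[P_n(\mathbb{P}^3,\beta)]^{\mathrm{vir}}_{\mathrm{pair}}$ with the pairs obstruction theory \eqref{pairpot} genuinely perfect, and only then applies the classical Graber--Pandharipande formula \cite{GP} to an ordinary perfect obstruction theory; the same reduction works for $(\mathbb{P}^2,\oO(-1),\oO(-2))$, $d=2$, $n=3$, where all stable pairs live in the threefold $\mathrm{Tot}_S(L_1)$. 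For the remaining thickened local-surface cases (e.g.\ $(\mathbb{P}^1\times\mathbb{P}^1,\oO(-1,-1),\oO(-1,-1))$ with $(d_1,d_2)=(1,2)$, $n=3$), where $P_n(X,\beta)\setminus P_n(S,\beta)\neq\varnothing$, no such reduction is available and the paper explicitly takes the localization expression as the \emph{definition} of the invariant (the $\star$-decorated entries in Appendix \ref{tables}), deferring the identification with \eqref{inv cpt case} to the then-forthcoming Oh--Thomas formula. To make your argument match the statement as the paper intends it, you either need to add the reduction-to-threefold step where it applies, or weaken your claim to say that in the genuinely $4$-fold cases the verified quantity is the localization-defined invariant.
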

The invariants in this proposition are defined by localization on the fixed locus \cite{CK2, CKM}. In the cases above where $X =\mathrm{Tot}_{\mathbb{P}^3}(K_{\mathbb{P}^3})$, we have $P_n(X,\beta) \cong P_n(\mathbb{P}^3,\beta)$ and 
\begin{equation*}
[P_n(X,\beta)]^{\mathrm{vir}}=(-1)^{\beta \cdot c_1(\mathbb{P}^3) +n}\cdot [P_n(\mathbb{P}^3,\beta)]_{\mathrm{pair}}^{\mathrm{vir}},  
\end{equation*}
where $[P_n(\mathbb{P}^3,\beta)]_{\mathrm{pair}}^{\mathrm{vir}}$ is the virtual class of the pairs perfect obstruction theory on $\mathbb{P}^3$ (discussed in \eqref{pairpot}, see also \cite[Lem.~3.1]{CMT2} in a similar setting). The sign in this formula is a preferred choice of orientation on $P_n(X,\beta)$ similar to \eqref{intro choice of ori}. Then the Graber-Pandharipande virtual localization formula \cite{GP} can be applied to the right hand side to show that the local invariants of Proposition \ref{prop intro} are equal to the global invariants \eqref{inv cpt case}. The same method works for the local surface case $(S,L_1,L_2) = (\mathbb{P}^2,\oO(-1),\oO(-2))$, $d=2$, $n=3$, because then all stable pairs are scheme theoretically supported in the threefold $\mathrm{Tot}_S(L_1)$.\,\footnote{For the other local surfaces cases of Proposition \ref{prop intro}, equating our invariants to the global invariants requires a more general virtual localization formula. Recently, Oh-Thomas announced such a formula  \cite{OT}.} See Remark \ref{virtloc} for more details.

We remark that most stable pair invariants of local surfaces calculated in this paper are small (see Section \ref{local surfa list}).
For $X =\mathrm{Tot}_{\mathbb{P}^3}(K_{\mathbb{P}^3})$, the numbers are rather big:
\begin{align*}
P_{0,3[\ell]} = 11200, \quad P_{1,2[\ell]}([\ell])= -820, \quad P_{1,3[\ell]}([\ell]) = -68060, \quad P_{2,2[\ell]}([\ell])= 400,
\end{align*}
where $[\ell] \in H_2(\mathbb{P}^3,\mathbb{Z}) \cong H_2(X,\mathbb{Z})$ denotes the class of a line $\ell \subseteq \mathbb{P}^3$ and we also write
$[\ell] \in H^4(X,\mathbb{Z})$ for the pull-back of its Poincar\'e dual from $\mathbb{P}^3$ to $X$.
This provides further good evidence for Conjectures \ref{conj:GW/GV g=0} and \ref{conj:GW/GV g=1}.

\subsection{Acknowledgements}
We warmly thank Pierrick Bousseau, Andrea Brini, and Michel van Garrel for providing their values of the genus zero Gopakumar-Vafa type invariants for the geometries listed in Corollary \ref{main cor 2} and for useful discussions. This allowed us to use our calculations of stable pair invariants for verifying Conjecture \ref{conj:GW/GV g=0} in more cases.
We are grateful to Davesh Maulik and Yukinobu Toda for various helpful discussions and communications.
We also thank the anonymous referee for several helpful comments, which improved the exposition of the paper.
Y.C. is partially supported by the World Premier International Research Center Initiative (WPI), MEXT, Japan, the JSPS KAKENHI Grant Number JP19K23397,
and Royal Society Newton International Fellowships Alumni 2019. 
M.K. is supported by NWO grant VI.Vidi.192.012.
S.M. is supported by NWO grant TOP2.17.004.

\section{Background}

\subsection{DT invariants of Calabi-Yau 4-folds}\label{review dt4}

Let $X$ be a smooth projective Calabi-Yau 4-fold with ample divisor $\omega$
and take a cohomology class
$v \in H^{\ast}(X, \mathbb{Q})$.
The coarse moduli space $M_{\omega}(v)$
of $\omega$-Gieseker semistable sheaves
$E$ on $X$ with $\ch(E)=v$ is a projective scheme.
We always assume that
$M_{\omega}(v)$ is a fine moduli space, i.e.~any point $[E] \in M_{\omega}(v)$ is stable and
there exists a universal family $\eE$ on $X \times M_{\omega}(v)$ flat over $M_\omega(v)$.
For instance, the moduli space of 1-dimensional stable sheaves $E$ with $[E]=\beta$, $\chi(E)=1$ and Hilbert schemes of 
closed subschemes satisfy this assumption \cite{CK1, CK2, CMT1, CT2}.

Borisov-Joyce~\cite{BJ} (in general) and Cao-Leung \cite{CL} (in special cases) constructed a virtual fundamental class
\begin{align}\label{virtual}
[M_{\omega}(v)]^{\rm{vir}} \in H_{2-\chi(v, v)}(M_{\omega}(v), \mathbb{Z}), \end{align}
where $\chi(\cdot,\cdot)$ denotes the Euler pairing.
In order to construct the above virtual class (\ref{virtual}) with coefficients in $\mathbb{Z}$ (instead of $\mathbb{Z}_2$), we need an orientability result 
for $M_{\omega}(v)$, which can be stated as follows. Let  
\begin{equation*}
 \lL:=\mathrm{det}(\dR \hH om_{\pi_M}(\eE, \eE))
 \in \Pic(M_{\omega}(v)), \quad  
\pi_M \colon X \times M_{\omega}(v)\to M_{\omega}(v)
\end{equation*}
be the determinant line bundle of $M_{\omega}(v)$, which is equipped with the nondegenerate symmetric pairing $Q$ induced by Serre duality.  An \textit{orientation} of 
$(\mathcal{L},Q)$ is a reduction of its structure group from $\mathrm{O}(1,\mathbb{C})$ to $\mathrm{SO}(1, \mathbb{C})=\{1\}$. In other words, we require a choice of square root of the isomorphism
\begin{align}\label{orie}Q: \lL\otimes \lL \to \oO_{M_{\omega}(v)}.  \end{align}
Existence of orientations was first proved when the Calabi-Yau 4-fold $X$ satisfies 
$\mathrm{Hol}(X)=\mathrm{SU}(4)$ and $H^{\rm{odd}}(X,\mathbb{Z})=0$ in \cite{CL2}, 
and was recently generalized to arbitrary Calabi-Yau 4-folds in \cite[Cor.~1.17]{CGJ}.
Notice that the collection of orientations forms a torsor for $H^{0}(M_{\omega}(v),\mathbb{Z}_2)$. The virtual class \eqref{virtual} depends on the choice of orientation, but we suppress it from the notation.

Roughly speaking, in order to construct \eqref{virtual}, one chooses at
every point $[E]\in M_{\omega}(v)$, a half-dimensional real subspace
\begin{align*}\Ext_{+}^2(E, E)\subseteq \Ext^2(E, E)\end{align*}
of the usual obstruction space $\Ext^2(E, E)$, on which the quadratic form $Q$ defined by Serre duality is real and positive definite. 
Then one glues local Kuranishi-type models of the form 
\begin{equation}\kappa_{+}=\pi_+\circ\kappa: \Ext^{1}(E,E)\to \Ext_{+}^{2}(E,E),  \nonumber \end{equation}
where $\kappa$ is the Kuranishi map for $M_{\omega}(v)$ at $[E]$ and $\pi_+$ denotes projection 
on the first factor of the decomposition $\Ext^{2}(E,E)=\Ext_{+}^{2}(E,E)\oplus\sqrt{-1}\cdot\Ext_{+}^{2}(E,E)$.  

In \cite{CL}, local models are glued in three special cases: 
\begin{enumerate}
\item when $M_{\omega}(v)$ consists of locally free sheaves only, 
\item  when $M_{\omega}(v)$ is smooth,
\item when $M_{\omega}(v)$ is a shifted cotangent bundle of a quasi-smooth derived scheme. 
\end{enumerate}
In each case, the corresponding virtual classes are constructed using either gauge theory or algebro-geometric perfect obstruction theory.

The general gluing construction, due to Borisov-Joyce \cite{BJ}, is 
based on Pantev-T\"{o}en-Vaqui\'{e}-Vezzosi's theory of shifted symplectic geometry \cite{PTVV} and Joyce's theory of derived $C^{\infty}$-geometry.
The corresponding virtual class is constructed using Joyce's
D-manifold theory (a machinery similar to Fukaya-Oh-Ohta-Ono's theory of Kuranishi space structures used for defining Lagrangian Floer theory).

Examples computed in this paper only involve virtual class constructions in (2) and (3) mentioned above.
We briefly review them:
\begin{itemize}
\item When $M_{\omega}(v)$ is smooth, the obstruction sheaf $\mathrm{Ob}\to M_{\omega}(v)$ is a vector bundle endowed with a quadratic form $Q$ via Serre duality. Then the virtual class is given by
\begin{equation}[M_{\omega}(v)]^{\rm{vir}}=\mathrm{PD}(e(\mathrm{Ob},Q)),   \nonumber \end{equation}
where $\mathrm{PD}(\cdot)$ denotes 
Poincar\'e dual and $e(\mathrm{Ob}, Q)$ is the half-Euler class of 
$(\mathrm{Ob},Q)$, i.e.~the Euler class of its real form $\mathrm{Ob}_+$. In this case, a choice of orientation \eqref{orie} is equivalent to a choice of
orientation of $\mathrm{Ob}_+$. 
The half-Euler class satisfies 
\begin{align*}
e(\mathrm{Ob},Q)^{2}&=(-1)^{\frac{\mathrm{rk}(\mathrm{Ob})}{2}}e(\mathrm{Ob}),  \qquad \textrm{ }\mathrm{if}\textrm{ } \mathrm{rk}(\mathrm{Ob})\textrm{ } \mathrm{is}\textrm{ } \mathrm{even}, \\
 e(\mathrm{Ob},Q)&=0, \qquad\qquad\qquad\qquad \ \,  \textrm{ }\mathrm{if}\textrm{ } \mathrm{rk}(\mathrm{Ob})\textrm{ } \mathrm{is}\textrm{ } \mathrm{odd}. 
\end{align*}
\item Suppose $M_{\omega}(v)$ is the classical truncation of the shifted cotangent bundle of a quasi-smooth derived scheme. Roughly speaking, this means that at any closed point $[E]\in M_{\omega}(v)$, we have a Kuranishi map of the form
\begin{equation}\kappa \colon
 \Ext^{1}(E,E)\to \Ext^{2}(E,E)=V_E\oplus V_E^{*},  \nonumber \end{equation}
where $\kappa$ factors through a maximal isotropic subspace $V_E$ of $(\Ext^{2}(E,E),Q)$. Then the virtual class of $M_{\omega}(v)$ is essentially the 
virtual class of the perfect obstruction theory formed by $\{V_E\}_{[E]\in M_{\omega}(v)}$. 
When $M_{\omega}(v)$ is furthermore smooth as a scheme, 
then it is
simply the Euler class of the vector bundle 
$\{V_E\}_{[E]\in M_{\omega}(v)}$ over $M_{\omega}(v)$. 
\end{itemize}

\subsection{Stable pair invariants of Calabi-Yau 4-folds}

As in \cite{PT1}, a stable pair $(F,s)$ on a smooth projective Calabi-Yau 4-fold $X$ consists of
\begin{itemize}
\item a pure dimension 1 sheaf $F$ on $X$,
\item a section $s \in H^0(X,F)$ with 0-dimensional or trivial cokernel.
\end{itemize}

For $\beta \in H_2(X, \mathbb{Z})$ and $n\in \mathbb{Z}$, denote by $P_n(X, \beta)$
be the moduli space of stable pairs $(F, s)$ on $X$
such that $F$ has scheme theoretic support with class $\beta$ and $\chi(F)=n$.
By \cite{PT1}, it can alternatively be seen as the moduli space parametrizing 2-term complexes
\begin{align*}
I^\mdot=\{\oO_X \stackrel{s}{\to} F\} \in D^b(\Coh(X))
\end{align*}
in the bounded derived category of coherent sheaves on $X$. This viewpoint produces an obstruction theory on $P_n(X, \beta)$, which is however not perfect because $\Ext^2(I^\mdot, I^\mdot)_0$ is in general non-vanishing.
Nonetheless, using the methods of Borisov-Joyce \cite{BJ}, one can construct a virtual class (see \cite[Thm.~1.4]{CMT2})
\begin{equation*}\label{pair moduli vir class}
[P_n(X, \beta)]^{\rm{vir}}\in H_{2n}\big(P_n(X, \beta),\mathbb{Z}\big)
\end{equation*}
depending on a choice of orientation. Existence of orientations was proved in \cite[Cor.~1.17]{CGJ}.

\section{Moduli spaces}

\subsection{Compactness I}

In the previous section, we assumed $X$ is a smooth \emph{projective} Calabi-Yau 4-fold. As we will discuss in more detail in Section \ref{sec:comparison}, the previous section also applies to certain cases where $X$ is a smooth quasi-projective Calabi-Yau 4-fold and $P_n(X,\beta)$ is proper. 

Suppose $S$ is a smooth projective surface and $L_1$, $L_2 \in \Pic(S)$ satisfy
$$
L_1 \otimes L_2 \cong K_S.
$$
Then $X = \mathrm{Tot}_S(L_1 \oplus L_2)$ is a smooth quasi-projective Calabi-Yau 4-fold, which we refer to as a \emph{local surface}. 
One way to ensure the properness of $P_n(X,\beta)$ is as follows.
\begin{prop} \label{proper crit 1}
Suppose $S$ is a smooth projective surface with $L_1$, $L_2 \in \Pic(S)$ satisfying $L_1 \otimes L_2 \cong K_S$ and let $X = \mathrm{Tot}_S(L_1 \oplus L_2)$. Let $\beta \in H_2(S,\mathbb{Z})$ and suppose for any $0 \neq \beta' \leqslant \beta$,\footnote{The notation $\beta' \leqslant \beta$ means that there exist effective curve classes $\beta',\beta'' \in H_2(S,\mathbb{Z})$ such that $\beta = \beta' + \beta''$.} we have $\beta' \cdot L_i < 0$ for $i=1, 2$.
Then $P_n(X,\beta)$ is projective for any $n \in \mathbb{Z}$. 
\end{prop}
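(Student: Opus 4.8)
The plan is to show that $P_n(X,\beta)$ is proper by proving it is set-theoretically supported on the zero section $S \hookrightarrow X$, so that $P_n(X,\beta) = P_n(S,\beta)$, and then to invoke properness of the latter. The underlying principle is that the hypothesis $\beta' \cdot L_i < 0$ for all $i$ and all $0 \neq \beta' \leqslant \beta$ is a negativity condition that prevents any one-dimensional subsheaf from escaping the zero section in either fibre direction. First I would reduce to a statement about the scheme-theoretic support of the pure one-dimensional sheaf $F$ underlying a stable pair $(F,s)$: since $F$ is pure of dimension one with proper support in class $\beta$, its support is a one-dimensional closed subscheme $C \subseteq X$ whose irreducible components map to effective curves in $S$, and each such component has class $\beta' \leqslant \beta$.

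The key geometric step is to analyze the projection of $\supp(F)$ to $S$ and to the fibre directions. Let $\pi : X = \mathrm{Tot}_S(L_1 \oplus L_2) \to S$ be the bundle projection. If a component $C'$ of $\supp(F)$ were not contained in the zero section, then its image $\pi(C')$ would be an effective curve of some class $\beta' \leqslant \beta$, and $C'$ would acquire a nonzero component in the $L_1$ or $L_2$ fibre direction; the obstruction to such a component is measured by the relevant degrees $\beta' \cdot L_1$ and $\beta' \cdot L_2$. The point is that moving a curve of class $\beta'$ off the zero section in the $L_i$ direction requires $L_i$ to have a nonzero section along $\pi(C')$, i.e.~requires $\deg(L_i|_{\pi(C')}) = \beta' \cdot L_i \geqslant 0$ for at least one $i$. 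Since the hypothesis forces $\beta' \cdot L_i < 0$ for \emph{both} $i=1,2$, no such deformation off the zero section is possible, and every component of $\supp(F)$ must lie in $S$. I would make this rigorous by pulling back along $\pi$ restricted to $\pi(C')$ and observing that the tautological sections of $\pi^*L_i$ cutting out the zero section restrict to sections of line bundles of negative degree on the (proper) curve $\pi(C')$, hence vanish identically, forcing $C' \subseteq S$.

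Once $\supp(F) \subseteq S$ set-theoretically, purity of $F$ together with the fact that $S \subseteq X$ is a regularly embedded smooth subvariety lets me conclude that $F$ is scheme-theoretically supported on $S$ (a pure one-dimensional sheaf cannot have embedded fibre directions against a smooth reduced base without violating purity or the negativity constraint applied to subclasses $\beta' \leqslant \beta$). This yields the isomorphism $\iota_* : P_n(S,\beta) \simeq P_n(X,\beta)$. Finally, $P_n(S,\beta)$ is projective: $S$ is projective, stable pairs on a projective surface form a projective moduli space by the results of Le Potier \cite{LeP} and Pandharipande-Thomas \cite{PT1} (equivalently, via the identification $P_n(S,\beta) \cong \Hilb^m(\cC/|\beta|)$ recalled in the introduction, which is a relative Hilbert scheme over the projective linear system $|\beta|$). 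Therefore $P_n(X,\beta)$ is projective.

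\textbf{Expected main obstacle.} The delicate point is the passage from set-theoretic to scheme-theoretic support: ruling out infinitesimal thickenings of $\supp(F)$ in the fibre directions. The hypothesis is stated for all $0 \neq \beta' \leqslant \beta$ precisely to handle this, since a nonreduced structure or a section contributing to such a thickening produces a subsheaf whose support projects to a subclass $\beta'$, and one must apply the negativity $\beta' \cdot L_i < 0$ to that subclass. Carefully tracking how the purity of $F$ interacts with the filtration by fibre-degree — and ensuring that the argument applies uniformly in families so as to deduce \emph{properness} (via the valuative criterion) rather than merely a pointwise statement — is where the real work lies.
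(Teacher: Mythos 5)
Your first step---showing every irreducible component of the support of a stable pair is set-theoretically contained in the zero section---is essentially the paper's argument. The paper phrases it via the spectral construction: $\oO_{D_{\mathrm{red}}}$ corresponds to a stable Higgs pair $(p_*\oO_{D_{\mathrm{red}}},\phi)$ on $Y=\mathrm{Tot}_S(L_1)$, and $\beta'\cdot L_2<0$ forces $\phi=0$; your version, that the tautological section of $\pi^*L_i$ restricts on the reduced proper irreducible curve to a section of a line bundle of negative degree and hence vanishes, is the same mechanism and is fine.

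The genuine gap is your passage from set-theoretic to scheme-theoretic support. You claim purity of $F$ forces $F$ to be scheme-theoretically supported on $S$, hence $P_n(X,\beta)\cong P_n(S,\beta)$. This is false: a ribbon thickening a curve $C\subseteq S$ into a fibre direction of $L_1\oplus L_2$ carries a pure one-dimensional structure sheaf that is set-theoretically but not scheme-theoretically supported on $S$. Indeed, under exactly the hypotheses of Proposition \ref{proper crit 1} (e.g.\ $(S,L_1,L_2)=(\mathbb{P}^2,\oO(-1),\oO(-2))$, where $L_1^{-1},L_2^{-1}$ are ample so the negativity condition holds for all $\beta$), the paper's Proposition \ref{stable pair on del pezzo} shows $P_3(X,2[H])\setminus P_3(\mathbb{P}^2,2[H])\neq\varnothing$; deciding when $P_n(X,\beta)\cong P_n(S,\beta)$ is a separate and much more delicate question. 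The correct conclusion does not need scheme-theoretic support on $S$: since every point of $P_n(X,\beta)$ is set-theoretically supported on $S$, boundedness gives a uniform $d\gg 0$ such that every stable pair is scheme-theoretically supported on the $d$-fold thickening $dS$ (the subscheme cut out by $I^d$, with $I$ the ideal of the zero section), so $P_n(X,\beta)\cong P_n(dS,\beta)$, which is projective because $dS$ is a projective scheme. This also dissolves your worry about the valuative criterion, since one has reduced to stable pairs on a fixed projective scheme.
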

\begin{proof}
Let $[(F,s)] \in P_n(X,\beta)$. We first show that $F$ is set theoretically supported on the zero section $S \subseteq X$. Let $D$ be an irreducible component of the scheme theoretic support of $F$, then we want to show $D_{\mathrm{red}} \subseteq S$. Let $Y = \mathrm{Tot}_S(L_1)$ and consider the projection $p : X = \mathrm{Tot}_Y(L_2) \rightarrow Y$ (here and below, we suppress the pull-back of $L_2$ along the projection $Y \to S$). Since $D_{\mathrm{red}}$ is a proper irreducible reduced curve, $\oO_{D_{\mathrm{red}}}$ is stable. By the spectral construction, it corresponds to a stable Higgs pair $(p_* \oO_{D_{\mathrm{red}}},\phi)$, where 
$$
\phi : p_* \oO_{D_{\mathrm{red}}} \rightarrow p_* \oO_{D_{\mathrm{red}}} \otimes L_2.
$$
Denote the curve class of the scheme theoretic support of $p_* \oO_{D_{\mathrm{red}}}$ by $\beta' \in H_2(Y,\mathbb{Z}) \cong H_2(S,\mathbb{Z})$. Then $0 \neq \beta' \leqslant \beta$, so $\beta' \cdot L_2 < 0$. Combined with stability of the Higgs pairs $(p_* \oO_{D_{\mathrm{red}}},\phi)$ and $(p_* \oO_{D_{\mathrm{red}}} \otimes L_2,\phi \otimes \mathrm{id}_{L_2})$, this implies $\phi =0$ so $D_{\mathrm{red}} \subseteq Y =  \mathrm{Tot}_S(L_1)$ (see \cite[Prop.~7.4]{TT} for a similar argument). Reversing the roles of $L_1, L_2$, we deduce $D_{\mathrm{red}} \subseteq Y =  \mathrm{Tot}_S(L_2)$, so $D_{\mathrm{red}} \subseteq S$. 

Since each element of $P_n(X,\beta)$ is set theoretically supported on $S$, we conclude that $P_n(X,\beta)$ is projective. Indeed, there is a $d \gg 0$ such that every element of $P_n(X,\beta)$ is scheme theoretically supported in $d S$, where $d S$ denotes the $d$ times thickening of the zero section $S \subseteq X$, i.e.~the closed subscheme of $X$ defined by $I^{d} \subseteq \oO_X$, where $I \subseteq \oO_X$ denotes the ideal of the zero section. Therefore $P_n(X,\beta) \cong P_n(dS,\beta)$, 
\end{proof}
Suppose $L_1^{-1}$ and $L_2^{-1}$ are ample. Then $K_S^{-1}$ is ample, i.e.~$S$ is del Pezzo, and $P_n(X,\beta)$ is projective for all $\beta, n$ by Proposition \ref{proper crit 1}. As noted in \cite[Sect.~4.2]{CMT2}, there are only two possibilities: 
\begin{prop} \label{CMT lemma}
Let $S$ be a smooth projective surface and $L_1, L_2 \in \Pic(S)$ such that $L_1 \otimes L_2 \cong K_S$. Suppose $L_1^{-1}$ and $L_2^{-1}$ are ample. Then, up to permutating $L_1, L_2$, we only have $(S,L_1,L_2) = (\mathbb{P}^2,\oO(-1),\oO(-2))$ or $(S,L_1,L_2) = (\mathbb{P}^1 \times \mathbb{P}^1,\oO(-1,-1),\oO(-1,-1))$.
\end{prop}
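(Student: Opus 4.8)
The plan is to reduce everything to the classification of minimal del Pezzo surfaces. First I would observe that $K_S^{-1} \cong L_1^{-1} \otimes L_2^{-1}$ is a tensor product of two ample line bundles, hence ample; thus $S$ is a del Pezzo surface. By the standard classification, $S$ is either $\mathbb{P}^2$, $\mathbb{P}^1 \times \mathbb{P}^1$, or a blow-up of $\mathbb{P}^2$ at $1 \leqslant k \leqslant 8$ points in general position. The goal is then to rule out every blow-up, leaving only the two minimal cases, and afterwards to solve the resulting numerical constraints on $L_1, L_2$.

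The crux of the argument, and the step I would flag as the main obstacle, is to show that $S$ contains no $(-1)$-curve; everything else is soft. Suppose $E \subseteq S$ were a smooth rational curve with $E^2 = -1$. Adjunction gives $K_S \cdot E = -2 - E^2 = -1$, so $K_S^{-1} \cdot E = 1$. On the other hand, since $L_1^{-1}$ and $L_2^{-1}$ are ample, each pairs with the irreducible curve $E$ to give a strictly positive integer, whence $L_i^{-1} \cdot E \geqslant 1$; therefore
\[
K_S^{-1} \cdot E = L_1^{-1} \cdot E + L_2^{-1} \cdot E \geqslant 2,
\]
a contradiction. Hence $S$ carries no $(-1)$-curve. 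Since every blow-up of $\mathbb{P}^2$ (including $\mathbb{F}_1$, the blow-up at a single point) contains a $(-1)$-curve, this forces $S$ to be minimal, i.e.~$S \cong \mathbb{P}^2$ or $S \cong \mathbb{P}^1 \times \mathbb{P}^1$. The key insight is thus that expressing the ample anticanonical class as a \emph{sum} of two ample classes is far more restrictive than mere del Pezzo-ness: it doubles the positivity demanded against any curve, which a $(-1)$-curve cannot supply.

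It then remains to pin down $L_1, L_2$ in each case, which is pure bookkeeping. For $S = \mathbb{P}^2$ we have $\Pic(S) = \mathbb{Z}$ and $K_S = \oO(-3)$; writing $L_i = \oO(-a_i)$ with $a_i \geqslant 1$ (ampleness of $L_i^{-1}$) and $a_1 + a_2 = 3$ forces $\{a_1, a_2\} = \{1,2\}$, giving $(\oO(-1), \oO(-2))$ up to order. For $S = \mathbb{P}^1 \times \mathbb{P}^1$ we have $\Pic(S) = \mathbb{Z}^2$ and $K_S = \oO(-2,-2)$; writing $L_i = \oO(-a_i,-b_i)$ with $a_i, b_i \geqslant 1$ and $(a_1+a_2, b_1+b_2) = (2,2)$ forces $a_1 = a_2 = b_1 = b_2 = 1$, so $L_1 = L_2 = \oO(-1,-1)$. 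Beyond the $(-1)$-curve exclusion the only care needed is the elementary fact that an ample divisor meets any irreducible curve in degree at least one, so I do not anticipate any further genuine difficulty.
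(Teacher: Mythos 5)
Your proposal is correct and follows essentially the same route as the paper: both arguments hinge on showing $S$ has no $(-1)$-curve by playing adjunction ($K_S\cdot E=-1$) against the positivity of the two ample bundles $L_i^{-1}$ on $E$ (forcing $K_S^{-1}\cdot E\geqslant 2$), and then invoking the classification of del Pezzo surfaces. Your write-up is somewhat more explicit than the paper's (which compresses the curve computation into one displayed inequality and leaves the final determination of $L_1,L_2$ to the reader), but the mathematical content is identical.
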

\begin{proof}
Suppose $S$ contains a $(-1)$-curve $C$. Then the Nakai criterion and adjunction imply
$$
-2 \geqslant \deg(L_1|_C) + \deg(L_2|_C) = \deg(K_S|_C) = -1,
$$
so $S$ does not contain $(-1)$-curves. The classification of del Pezzo surfaces yields the result.
\end{proof}
For both geometries of this proposition, the Gromov-Witten (and hence Gopakumar-Vafa type) invariants were determined in \cite[Sect.~3]{KP}.

Let us go back to an arbitrary smooth projective surface $S$ with $L_1$, $L_2 \in \Pic(S)$ satisfying $L_1 \otimes L_2 \cong K_S$. Consider the moduli space $P_n(S,\beta)$ of stable pairs $(F,s)$ on $S$ with $\chi(F) = n$ and scheme theoretic support of $F$ in class $\beta \in H_2(S,\mathbb{Z})$. Any stable pair $I^\mdot = \{ \oO_S \rightarrow F\}$ gives rise to a stable pair
$$
\{ \oO_X \rightarrow \iota_* \oO_S \rightarrow \iota_* F\}
$$
on $X = \mathrm{Tot}_S(L_1 \oplus L_2)$, where $\iota : S \hookrightarrow X$ denotes inclusion of the zero section. This gives a closed embedding 
\begin{equation} \label{emb}
P_n(S,\beta) \hookrightarrow P_n(X,\beta).
\end{equation}
We refer to elements of $P_n(X,\beta)$ in the image as ``stable pairs which are scheme theoretically supported on $S$''.
Requiring $P_n(X,\beta)$ to be proper poses restrictions on $n,\beta$.
The following result is very useful for finding ``candidates'' for proper moduli spaces $P_n(X,\beta)$ (as we will see later in this section in Proposition \ref{listproj}). 
\begin{prop} \label{properness prop}
Let $S$ be a smooth projective surface, $L_1$, $L_2 \in \Pic(S)$ such that $L_1 \otimes L_2 \cong K_S$ and let $X = \mathrm{Tot}_S(L_1 \oplus L_2)$. Let $\beta \in H_2(S,\mathbb{Z})$ and $n \in \mathbb{Z}$ such that $P_n(X,\beta)$ is proper and $P_n(S,\beta) \neq \varnothing$. Suppose $C_1, C_2 \subseteq S$ are effective divisors satisfying
\begin{itemize}
\item $C_1 \cong \mathbb{P}^1$ and $[C_1+C_2] = \beta$,
\item $L_i \cdot C_1 = 0$ for $i=1$ or $i=2$.
\end{itemize}
Then
\begin{align*}
&- \tfrac{1}{2}\beta(\beta + K_S)  \leqslant n \leqslant - \tfrac{1}{2}C_2(C_2 + K_S).
\end{align*}
\end{prop}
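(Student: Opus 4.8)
The plan is to establish the two inequalities by quite different means: the lower bound $-\tfrac{1}{2}\beta(\beta+K_S)\leqslant n$ will follow from the mere non-emptiness of $P_n(S,\beta)$, whereas the upper bound $n\leqslant-\tfrac{1}{2}C_2(C_2+K_S)$ will be extracted from properness of $P_n(X,\beta)$ by exhibiting, whenever $n$ is too large, a one-parameter family of stable pairs with no limit, contradicting the valuative criterion.

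For the lower bound, I would begin with any $[(F,s)]\in P_n(S,\beta)$. The image of $s\colon\oO_S\to F$ is the structure sheaf $\oO_C$ of the scheme-theoretic support $C$ of $F$, a Cohen--Macaulay curve in class $\beta$, and purity of $F$ yields a short exact sequence $0\to\oO_C\to F\to Q\to 0$ with $Q$ zero-dimensional. Taking Euler characteristics gives $n=\chi(F)=\chi(\oO_C)+\mathrm{length}(Q)\geqslant\chi(\oO_C)$, and since $C$ is an effective divisor in class $\beta$, adjunction gives $\chi(\oO_C)=1-p_a(C)=-\tfrac{1}{2}\beta(\beta+K_S)$. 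This is exactly the desired lower bound.

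For the upper bound, after relabelling I may assume $L_1\cdot C_1=0$, so that $L_1|_{C_1}\cong\oO_{\mathbb{P}^1}$. Then the constant sections of the $L_1$-fibre direction produce a one-parameter family of translates $\{C_1^t\}_{t\in\mathbb{C}^*}$, each isomorphic to $\mathbb{P}^1$ and of class $[C_1]\in H_2(X,\mathbb{Z})\cong H_2(S,\mathbb{Z})$, sitting at height $t$ off the zero section and escaping to infinity as $t\to\infty$. Arguing by contradiction, I would suppose $n\geqslant 1-\tfrac{1}{2}C_2(C_2+K_S)=1+\chi(\oO_{C_2})$ and set $\ell:=n-1-\chi(\oO_{C_2})\geqslant 0$. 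On each $C_1^t$ I place the stable pair $(\oO_{C_1^t}(\ell),s_1)$, where $\oO_{C_1^t}(\ell)$ is the degree-$\ell$ line bundle on $C_1^t\cong\mathbb{P}^1$ and $s_1$ vanishes at $\ell$ distinct points (possible since $\mathbb{P}^1$ has infinitely many points), and on the fixed curve $C_2\subseteq S$ the pair $(\oO_{C_2},1)$. Their direct sum, pushed forward to $X$, is a stable pair in class $\beta$ with $\chi=(\ell+1)+\chi(\oO_{C_2})=n$. As $t$ ranges over $\mathbb{C}^*$ the supports $C_1^t$ and $C_2$ lie at different heights in the $L_1$-fibre and are therefore disjoint, so this is a flat family of stable pairs of constant invariants whose support leaves every compact set as $t\to\infty$; by the valuative criterion this contradicts properness of $P_n(X,\beta)$. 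Hence $n\leqslant-\tfrac{1}{2}C_2(C_2+K_S)$.

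The lower bound and the Euler-characteristic bookkeeping are routine. The conceptual heart, and the step one must get exactly right, is the upper bound. The main obstacle is to verify cleanly that $L_i\cdot C_1=0$ genuinely produces deformations of $C_1$ off the zero section inside $X$ (so the family really escapes every compact set), and that the escaping family is a legitimate flat family of stable pairs of \emph{constant} $\chi=n$ and \emph{constant} class $\beta$ violating the valuative criterion. In particular one must track the threshold shift coming from the $\chi(\oO_{\mathbb{P}^1})=1$ contribution of the moving $\mathbb{P}^1$-component, which is precisely what turns the naive bound $\chi(\oO_{C_2})$ into the stated $-\tfrac{1}{2}C_2(C_2+K_S)=\chi(\oO_{C_2})$ after the integrality gap $n<1+\chi(\oO_{C_2})$ is used.
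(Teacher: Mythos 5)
Your proposal is correct and follows essentially the same route as the paper: the lower bound comes from $n=\chi(F)\geqslant\chi(\oO_C)$ for any pair in $P_n(S,\beta)$, and the upper bound is obtained by contradiction, using the triviality of $L_i|_{C_1}$ to translate a copy of $C_1$ off the zero section, adding $\ell=n-1-\chi(\oO_{C_2})$ points to reach $\chi=n$, and letting the $\mathbb{C}^*$-scaling push that component to infinity, violating properness. The only cosmetic difference is that you place the extra length on the moving $\mathbb{P}^1$ as a degree-$\ell$ line bundle, whereas the paper twists $\oO_{D_1\sqcup C_2}$ by an effective divisor of appropriate length; these are the same construction.
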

\begin{proof}
Suppose $P_n(S,\beta) \neq \varnothing$, $P_n(X,\beta)$ is proper, and let $C_1, C_2 \subseteq S$ be as stated. Then for any element $[(F,s)] \in P_n(S,\beta)$ with underlying scheme theoretic support $C$, we have
$$
n = \chi(F) \geqslant \chi(\oO_C) = - \tfrac{1}{2}\beta(\beta + K_S).
$$
Suppose 
$$
n \geqslant 1  - \tfrac{1}{2}C_2(C_2 + K_S).
$$
Since $C_1 \cong \mathbb{P}^1$ and $\deg(L_i|_{C_1}) = L_i \cdot C_1 = 0$, for $i=1$ or $i=2$, the line bundle $L_i|_{C_1}$ is trivial. 
Hence we can take a nowhere vanishing section $D_1$ of the line bundle $L_i|_{C_1} \cong \mathbb{P}^1 \times \mathbb{C}$. In particular, $D_1$ and $C_2$ are disjoint. 
Therefore
\begin{align*}
\chi(\oO_{D_1 \sqcup C_2}) &=  \chi(\oO_{D_1}) + \chi(\oO_{C_2})  \\
&= 1 - \tfrac{1}{2}C_2(C_2 + K_S).
\end{align*}
Twisting $\oO_{D_1 \sqcup C_2}$ by an effective divisor of appropriate length, we obtain a stable pair $[(F,s)] \in P_n(X,\beta) \setminus P_n(S,\beta)$ with underlying scheme theoretic support $D_1 \sqcup C_2$. Since $D_1$ does not lie in the zero-section, using the $\mathbb{C}^*$-scaling action on $L_i$, we get a family of stable pairs with part of the support (i.e.~$D_1$) moving off to infinity, contradicting properness of $P_n(X,\beta)$.
\end{proof}

We want to apply this proposition to smooth projective surfaces $S$ with $L_1$, $L_2 \in \Pic(S)$ such that $L_1 \otimes L_2 \cong K_S$ and $L_1^{-1}$, $L_2^{-1}$ non-trivial and nef. These surfaces were recently studied in the context of the log-local principle by Bousseau-Brini-van Garrel \cite{BBG}. In particular, they determined the genus zero Gromov-Witten (and hence Gopakumar-Vafa type) invariants of $\mathrm{Tot}_S(L_1 \oplus L_2)$ in many new cases. 

Smooth projective surfaces $S$ with $K_S^{-1}$ nef and big are called weak del Pezzo surfaces. The weak toric del Pezzo surfaces are: $\mathbb{P}^2$, $\mathbb{P}^1 \times \mathbb{P}^1$, $\mathbb{F}_1$, $\mathbb{F}_2$, or certain repeated toric blow-ups of $\mathbb{P}^2$ in at most 6 points as specified in \cite{Sat}. In this paper, we only consider the \emph{minimal} cases, i.e.~the first four cases. Using the notation for Hirzebruch surfaces from the introduction, the only possibilities for $L_1, L_2 \in \Pic(S)$ such that $L_1 \otimes L_2 \cong K_S$ with $L_1^{-1}$, $L_2^{-1}$ non-trivial and nef are (up to permutations of $L_1,L_2$):
\begin{itemize}
\item $(S,L_1,L_2)=(\mathbb{P}^2,\oO(-1),\oO(-2))$,
\item $(S,L_1,L_2)=(\mathbb{P}^1\times \mathbb{P}^1,\oO(-1,-1),\oO(-1,-1))$ or $(\mathbb{P}^1 \times \mathbb{P}^1,\oO(-1,0),\oO(-1,-2))$,
\item $(S,L_1,L_2)=(\mathbb{F}_1,\oO(-1,-1),\oO(-1,-2))$ or $(\mathbb{F}_1,\oO(0,-1), \oO(-2,-2))$,
\item $(S,L_1,L_2)=(\mathbb{F}_2,\oO(-1,-2),\oO(-1,-2))$.
\end{itemize}

\begin{exam} \label{nef case 1}
Suppose $(S,L_1,L_2)=(\mathbb{P}^1\times \mathbb{P}^1, \oO(-1,0), \oO(-1,-2))$. Let $H_1 = \{\mathrm{pt}\} \times \mathbb{P}^1$ and $H_2 =  \mathbb{P}^1 \times \{\mathrm{pt}\}$. We define $(d_1,d_2) := d_1 H_1 + d_2 H_2$. For all $d_1, d_2 \in \mathbb{Z}$, $(d_1,d_2)$ is effective if and only if $d_1, d_2 \geqslant 0$. For $(0,d)$ with $d \geqslant 1$, the moduli space $P_n(X,(0,d))$ is projective for all $n$ by Proposition \ref{proper crit 1}. Now suppose $d_1>0$, $d_2 \geqslant 0$, and $n \geqslant 0$. Let $C_1 \in |H_1|$ and $C_2 \in |(d_1-1) H_1 + d_2 H_2|$. Then $L_1 \cdot C_1 = 0$ 
and the inequalities of Proposition \ref{properness prop} reduce to
$$
d_1 + d_2 -d_1d_2 \leqslant n \leqslant d_1 + 2d_2 -d_1d_2 - 1. 
$$ 
These inequalities have the following solutions:
\begin{itemize}
\item $(d_1,d_2) = (3,2), (2,d)$ with $d \geqslant 2$ and $n=0$,
\item $(d_1,d_2) = (d,1), (1,d)$ with $d \geqslant 1$ and $n=1$, or $(d_1,d_2) = (2,d)$ with $d \geqslant 1$ and $n=1$,
\item  $(1,d)$ with $2 \leqslant n \leqslant d$.
\end{itemize}
\end{exam}

\begin{exam} \label{nef case 2}
Suppose $(S,L_1,L_2)=(\mathbb{F}_1,\oO(-1,-1),\oO(-1,-2))$ and use the notation for Hirzebruch surfaces from the introduction, so $(d_1,d_2) := d_1 B + d_2 F$ for all $d_1, d_2 \in \mathbb{Z}$. Then $(d_1,d_2)$ is effective if and only if $d_1, d_2 \geqslant 0$ (this holds for all Hirzebruch surfaces). For $(0,d)$ with $d \geqslant 1$, the moduli space $P_n(X,(0,d))$ is projective for all $n$ by Proposition \ref{proper crit 1}. Suppose $d_1 > 0$, $d_2 \geqslant 0$, and $n \geqslant 0$. Let $C_1 \in |B|$ and $C_2 \in |(d_1-1) B + d_2 F|$. Then $L_1 \cdot C_1 = 0$ and the inequalities of Proposition \ref{properness prop} reduce to
$$
\tfrac{1}{2} d_1(d_1+1) - d_2(d_1-1) \leqslant n \leqslant \tfrac{1}{2} d_1 (d_1 - 1) - d_2(d_1 - 2).
$$
These inequalities have the following solutions:
\begin{itemize}
\item $(d_1,d_2) = (3,3), (2,d)$ with $d \geqslant 3$ and $n=0$,
\item $(d_1,d_2) = (2,d)$ with $d \geqslant 2$ and $n=1$,
\item $(1,d)$ with $1 \leqslant n \leqslant d$.
\end{itemize}
\end{exam}

\begin{exam} 
Suppose $(S,L_1,L_2)=(\mathbb{F}_1,\oO(0,-1),\oO(-2,-2))$. Suppose $d_1 > 0$, $d_2 \geqslant 0$, and $n \geqslant 0$. Taking $C_1,C_2$ as in Example \ref{nef case 2} leads to the same list. Additionally, we can take $d_1 \geqslant 0$, $d_2 > 0$, $n \geqslant 0$, $C_1 \in |F|$ and $C_2 \in |d_1 B + (d_2-1) F|$. Then $L_1 \cdot C_1 = 0$ and the inequalities of Proposition \ref{properness prop} reduce to
$$
\tfrac{1}{2} d_1(d_1+1) - d_2(d_1-1) \leqslant n \leqslant \tfrac{1}{2} d_1(d_1+1) - (d_2-1)(d_1-1).
$$
The solutions to these inequalities \emph{and} the ones from Example \ref{nef case 2} are:
\begin{itemize}
\item $(d_1,d_2) = (2,3), (2,4), (3,3)$ and $n=0$,
\item $(d_1,d_2) = (2,2), (2,3), (1,d)$ with $d \geqslant 1$ and $n=1$.
\end{itemize}
\end{exam}

\begin{exam} \label{nef case 4}
Suppose $(S,L_1,L_2)=(\mathbb{F}_2,\oO(-1,-2),\oO(-1,-2))$. For $(0,d)$ with $d \geqslant 1$, the moduli space $P_n(X,(0,d))$ is projective for all $n$ by Proposition \ref{proper crit 1}. Suppose $d_1 > 0$, $d_2 \geqslant 0$, and $n \geqslant 0$. Let $C_1 \in |B|$ and $C_2 \in |(d_1-1) B + d_2 F|$. Then $L_1 \cdot C_1 = 0$ and the inequalities of Proposition \ref{properness prop} reduce to
$$
d_1^2 - d_2(d_1-1) \leqslant n \leqslant (d_1-1)^2 - d_2(d_1-2).
$$
These inequalities have the following solutions:
\begin{itemize}
\item $(d_1,d_2) = (2,d)$, $d \geqslant 4$, and $n=0$,
\item $(d_1,d_2) = (2,d)$, $d \geqslant 3$, and $n=1$,
\item $(1,d)$ with $1 \leqslant n \leqslant d$.
\end{itemize}
\end{exam}

In these examples we listed, for given $(S,L_1,L_2)$, all the cases for which $n \geqslant 0$ and \emph{potentially} $P_n(X,\beta)$ is proper and $P_{n}(S,\beta)  \neq \varnothing$ (Proposition \ref{properness prop}). For $\beta = (0,d)$ with $d \geqslant 1$, $P_{n}(S,\beta)  \neq \varnothing$ if and only if $n \geqslant d$, and $P_n(X,\beta)$ is proper by Proposition \ref{proper crit 1}. For all other cases listed, $P_n(S,\beta)$ is also non-empty since $|\beta| \neq \varnothing$ and $n \geqslant \chi(\oO_C)$ for any $C \in |\beta|$. Indeed adding sufficiently many points to $C$ one obtains a stable pair $(F,s)$ on $S$ with $\chi(F) = n$. We now prove that in each of the cases listed, $P_n(X,\beta)$ is indeed proper. 

\begin{prop} \label{listproj}
In each of the cases listed in Examples \ref{nef case 1}--\ref{nef case 4}, $P_n(X,\beta)$ is projective. 
\end{prop}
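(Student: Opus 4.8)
The plan is to apply the valuative criterion of properness and reduce the statement to showing that no one-parameter family of stable pairs in $P_n(X,\beta)$ can degenerate with part of its support escaping to infinity along the fibres of $L_1\oplus L_2$. First I would dispose of the classes $\beta=(0,d)$, $d\geqslant 1$: for these Proposition \ref{proper crit 1} applies directly, as already noted in Examples \ref{nef case 1}--\ref{nef case 4}, so $P_n(X,(0,d))$ is projective. It then remains to treat the finitely many listed cases with $d_1>0$, where Proposition \ref{proper crit 1} fails because the flat directions $L_i\cdot C_1=0$ are present.

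For these, I would compactify the fibres by embedding $X=\mathrm{Tot}_S(L_1\oplus L_2)$ into the projective bundle $\bar X:=\mathbb{P}_S(L_1\oplus L_2\oplus \oO_S)$, which is projective over $S$, hence projective; its divisor at infinity $D_\infty:=\bar X\setminus X=\mathbb{P}_S(L_1\oplus L_2)$ is a $\mathbb{P}^1$-bundle over $S$. Since $\bar X$ is a smooth projective $4$-fold, $P_n(\bar X,\beta)$ is a projective scheme and $P_n(X,\beta)$ is an open subscheme of it. Properness of $P_n(X,\beta)$ is then equivalent to its being closed in $P_n(\bar X,\beta)$, i.e.~to the statement that no flat limit of a family in $P_n(X,\beta)$ has scheme-theoretic support meeting $D_\infty$.

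Suppose, for contradiction, that such a limit $(F_0,s_0)$ exists. The fibrewise $\mathbb{C}^*$-scaling action fixes $D_\infty$ pointwise, so the portion of $\Supp(F_0)$ lying on $D_\infty$ projects to a curve $C_1\subseteq S$; moreover, escape along the $L_i$-direction forces $L_i|_{C_1}$ to carry sections, whence $L_i\cdot C_1=0$ for $i=1$ or $2$ (as $L_i^{-1}$ is nef)—precisely the flat curves isolated in Proposition \ref{properness prop}. This is the same mechanism as in the Higgs-pair argument of Proposition \ref{proper crit 1}, where $L_i\cdot D<0$ forced the support to remain on the zero section and hence prevented any motion. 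Writing $\beta=[C_1]+[C_2]$ for the escaped and stationary parts and bookkeeping Euler characteristics exactly as in the proof of Proposition \ref{properness prop}, the stable pair structure forces $n\geqslant 1-\tfrac12 C_2(C_2+K_S)$, contradicting the upper bound $n\leqslant -\tfrac12 C_2(C_2+K_S)$ satisfied by every case in the lists of Examples \ref{nef case 1}--\ref{nef case 4}. Hence no such limit exists, $P_n(X,\beta)$ is closed in the projective scheme $P_n(\bar X,\beta)$, and is therefore projective.

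The main obstacle I anticipate lies in the limit analysis of the third paragraph: one must show that \emph{every} possible escaping configuration—over \emph{every} flat curve $C_1$, allowing multiplicities and embedded points—reduces to a decomposition $\beta=[C_1]+[C_2]$ already governed by the inequalities of Proposition \ref{properness prop}, so that the numerical constraints genuinely rule out all degenerations and not merely the single one exhibited there. Because $S$ is minimal toric weak del Pezzo, the flat curve classes are limited to the fibre and the negative section, so this is a finite, if delicate, enumeration; the crux is to check that in each listed case the chosen $C_1,C_2$ realise the extremal value of $-\tfrac12 C_2(C_2+K_S)$ among all admissible escaping decompositions, which is what makes the bound $n\leqslant -\tfrac12 C_2(C_2+K_S)$ sufficient rather than merely necessary.
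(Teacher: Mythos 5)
Your overall strategy---rule out escape to infinity by an Euler-characteristic bound in the spirit of Proposition \ref{properness prop}---is the same mechanism the paper uses, but your write-up stops exactly where the actual work begins, and you say so yourself (``the main obstacle I anticipate''). That obstacle is not a routine check; it \emph{is} the proof. The paper's argument does not pass through a compactification or the valuative criterion at all: it takes an arbitrary $[(F,s)]\in P_n(X,\beta)$, uses the spectral/Higgs-pair stability argument to show that any irreducible component $D$ of the support not lying set-theoretically on $S$ must satisfy $[D_{\mathrm{red}}]\cdot L_1\geqslant 0$ or $[D_{\mathrm{red}}]\cdot L_2\geqslant 0$, classifies such classes (only multiples of the flat fibre class $H_1$), shows each such $D_{\mathrm{red}}$ is a section of $\mathrm{Tot}_\Sigma(L_1)\cong\mathbb{P}^1\times\mathbb{C}$ so that the escaped components $D_1,\dots,D_\ell$ and the remainder $D'$ are \emph{mutually disjoint}, establishes $\chi(\oO_{D_i})\geqslant 1$ and $\chi(\oO_{D'})\geqslant 1-g(p_*[D'])$ for possibly non-reduced components via filtrations by thickenings, and then checks by explicit computation that
$n\leqslant \delta-1-\tfrac12(\beta-\delta H_1)(\beta-\delta H_1+K_S)$ for \emph{every} $1\leqslant\delta\leqslant d_1$, where $\delta=\sum_i\delta_i$ is the total multiplicity of the escaped part. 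Once no component can leave $S$, projectivity follows as in Proposition \ref{proper crit 1} from $P_n(X,\beta)\cong P_n(dS,\beta)$.

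The concrete gaps in your version are: (i) the single inequality $n\geqslant 1-\tfrac12 C_2(C_2+K_S)$ you invoke only covers an escape of one reduced flat curve of class $H_1$; for $\delta\geqslant 2$ or non-reduced escaped components the relevant lower bound is the $\delta$-dependent one above, and verifying it for all admissible $\delta$ in each listed case is precisely the enumeration you defer; (ii) the claim that ``escape along the $L_i$-direction forces $L_i|_{C_1}$ to carry sections'' is not justified---the paper instead derives $[D_{\mathrm{red}}]\cdot L_i\geqslant 0$ from stability of $\oO_{D_{\mathrm{red}}}$ via the spectral construction, which is a different and actually provable statement; (iii) the disjointness of the escaped components and the inequalities $\chi(\oO_{D_i})\geqslant 1$, $\chi(\oO_{D'})\geqslant 1-g$ for non-reduced supports are needed to make the Euler characteristics add up, and are nowhere addressed. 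The compactification into $\mathbb{P}_S(L_1\oplus L_2\oplus\oO_S)$ also introduces complications the paper avoids (limits can acquire components in fibre classes of $D_\infty$ and embedded points, and one must still analyze an arbitrary limiting pair), whereas arguing directly on $X$ with an arbitrary stable pair sidesteps all limit analysis. As it stands the proposal is a plan, not a proof.
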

\begin{proof}
We write out the proof for Example \ref{nef case 1}. The other cases are analogous. Recall that $H_1 := \{\mathrm{pt}\} \times \mathbb{P}^1$, $H_2 := \mathbb{P}^1 \times \{\mathrm{pt}\}$, $L_1 := \oO(-H_1)$, $L_2 := \oO(-H_1-2H_2)$, and $\beta:=d_1H_1+d_2H_2$. 
Suppose $n, \beta$ are as listed in Example \ref{nef case 1}. As in Proposition \ref{proper crit 1}, it is enough to show that all elements of $P_n(X,\beta)$ are set theoretically supported on $S$. Suppose $[(F,s)] \in P_n(X,\beta)$ has scheme theoretic support $C$ and let $D$ be an irreducible component of $C$  which is \emph{not} set theoretically supported on $S$. Then we claim $D_{\mathrm{red}}$ is a proper irreducible reduced curve with class $[D_{\mathrm{red}}] \in H_2(X,\mathbb{Z}) \cong H_2(S,\mathbb{Z})$ satisfying $[D_{\mathrm{red}}] \cdot L_1 \geqslant 0$ or $[D_{\mathrm{red}}] \cdot L_2 \geqslant 0$.
Indeed suppose $[D_{\mathrm{red}}] \cdot L_1 < 0$ and $[D_{\mathrm{red}}] \cdot L_2 < 0$. Using the spectral construction as in the proof of Proposition \ref{proper crit 1}, stability of $\oO_{D_{\mathrm{red}}}$, then implies $D_{\mathrm{red}} \subseteq S$ contrary to our assumption.

The only non-zero effective curve classes $\beta'$ on $S$ such that $\beta' \cdot L_1 \geqslant 0$ or $\beta' \cdot L_2 \geqslant 0$ are $\beta' = m H_1$ for some $m>0$. 
Hence there exists a $\Sigma \in |H_1|$ such that 
$$
p|_{D_{\mathrm{red}}} : D_{\mathrm{red}} \to \Sigma \subseteq S,
$$ 
where $p : X \to S$ denotes the projection. Note that $L_1|_{\Sigma} \cong \oO$ and $L_2|_{\Sigma} \cong \oO(-2)$. Since $[D_{\mathrm{red}}] \cdot L_2 < 0$, a similar argument as above shows that $D_{\mathrm{red}} \subseteq \mathrm{Tot}_{\Sigma}(L_1) \cong \mathbb{P}^1 \times \mathbb{C}$. Therefore $D_{\mathrm{red}}$ is a non-zero section of $\mathrm{Tot}_{\Sigma}(L_1) \cong \mathbb{P}^1 \times \mathbb{C}$.

Denote the irreducible components of $C$ which are \emph{not} set theoretically supported on $S$ by $D_1, \ldots, D_{\ell}$ and let $D'$ be the union of the remaining components. Above, we showed each $D_{i,\mathrm{red}} \cong \mathbb{P}^1$ and $D_{i,\mathrm{red}}$ is a non-zero section of $\mathrm{Tot}_{\Sigma_i}(L_1) \cong \mathbb{P}^1 \times \mathbb{C}$ for some $\Sigma_i \in |H_1|$. It follows that $D_{1,\mathrm{red}}, \ldots, D_{\ell,\mathrm{red}}, D'_{\mathrm{red}}$ are mutually disjoint.  Denote the multiplicity of $D_i$ at $D_{i,\mathrm{red}}$ by $\delta_i \geqslant 1$. Consider the classes $p_*[D_i] , p_*[D'] \in H_2(S,\mathbb{Z})$, where $p : X \to S$ is the projection. Then
$$
p_*[D_i] := \delta_i H_1, \quad p_*[D']:= \beta - \delta H_1,
$$
where $\delta := \sum_{i=1}^{\ell} \delta_i$. We claim
\begin{equation} \label{ineqschi}
\chi(\oO_{D_i}) \geqslant 1, \quad \chi(\oO_{D'}) \geqslant 1-g(p_*[D']) = -\tfrac{1}{2}(\beta - \delta H_1)(\beta - \delta H_1 + K_S),
\end{equation}
for all $i=1, \ldots, \ell$, where the last equality is by the Riemann-Roch formula. When $D_{i}$ (resp.~$D'$) are reduced, these inequalities are equalities. In general, since $N_{D_{i,\mathrm{red}}/X} \cong \oO \oplus \oO \oplus \oO(-2)$ and $N_{S/X} \cong L_1 \oplus L_2$ with $L_1^{-1}, L_2^{-1}$ nef, we have inequalities as stated\,\footnote{One way to see this is by using filtrations by thickenings of $D_{i,\mathrm{red}} \subseteq X$ and $S \subseteq X$ as in the proof of Proposition \ref{stable pair on del pezzo} below.}. From \eqref{ineqschi} and the fact that $D_1, \ldots, D_\ell, D'$ are mutually disjoint, we deduce
\begin{align*}
n = \chi(F) \geqslant \chi(\oO_C) &= \sum_{i=1}^{\ell} \chi(\oO_{D_i}) + \chi(\oO_{D'}) \\
&\geqslant \delta -\tfrac{1}{2}(\beta - \delta H_1)(\beta - \delta H_1 + K_S) \\
&= \delta - \tfrac{1}{2}(d_1 - \delta)(d_2-2) - \tfrac{1}{2} d_2(d_1-\delta-2).
\end{align*}
However, for each of the cases listed in Example \ref{nef case 1}, it is easy to see that $n \leqslant \delta - 1 - \tfrac{1}{2}(d_1 - \delta)(d_2-2) - \tfrac{1}{2} d_2(d_1-\delta-2)$ for all $1 \leqslant \delta \leqslant d_1$ by explicit calculation. We have reached a contradiction.
\end{proof}

\noindent \textbf{Conclusion.} For any $(S,L_1,L_2)$ with $L_1 \otimes L_2 \cong K_S$, $L_1^{-1}, L_2^{-1}$ non-trivial and nef, $S$ minimal and toric, we classified \emph{all} $n \geqslant 0$, $\beta \in H_2(S,\mathbb{Z})$ such that $P_n(X,\beta)$ is proper and $P_n(S,\beta) \neq \varnothing$.

\subsection{Compactness II} 

In the previous section, we studied properness of $P_n(X,\beta)$ for local surfaces. In particular, for 
$(S,L_1,L_2) = (\mathbb{P}^2,\oO(-1),\oO(-2))$ or $(\mathbb{P}^1 \times \mathbb{P}^1,\oO(-1,-1),\oO(-1,-1))$, the moduli space $P_n(X,\beta)$ is always proper (Proposition \ref{proper crit 1}). 
We are now interested in the cases where $P_n(X,\beta) \cong P_n(S,\beta)$, i.e.~the embedding \eqref{emb} is an isomorphism. For the 3-fold $\mathrm{Tot}(K_{\mathbb{P}^2})$, this question was considered by Choi-Katz-Klemm in \cite[Prop.~2]{CKK}. In the proof of the following proposition, we use some of their techniques (adapted to the 4-fold setting).
\begin{prop}\label{stable pair on del pezzo}
Let $X=\mathrm{Tot}_{\mathbb{P}^2}(\oO(-1)\oplus \oO(-2))$, $\beta=d [H]$ with $d \geqslant 1$, and $n\geqslant 0$. Then
\begin{align*}
P_n(X, \beta) \cong P_n(\mathbb{P}^2, \beta) 
\end{align*}
if and only if 
\begin{enumerate}
\item $d=1$ and any $n \geqslant 0$, or
\item  $d=2,3,4$ and $n=0,1$, or
\item $d=2,3$ and $n=2$.
\end{enumerate}
Let $X=\mathrm{Tot}_{\mathbb{P}^1\times \mathbb{P}^1}(\oO(-1,-1)\oplus \oO(-1,-1))$, $\beta=d_1[H_1]+d_2[H_2] \neq 0$ with $d_1,d_2 \geqslant 0$, and $n\geqslant 0$.  Then 
\begin{align*} 
P_n(X, \beta) \cong P_n(\mathbb{P}^1\times \mathbb{P}^1, \beta), 
\end{align*}
if and only if
\begin{enumerate}
\item $(d_1,d_2) = (1,0), (0,1), (1,1)$ and any $n \geqslant 0$, or
\item $(d_1,d_2) = (0,d), (d,0)$ with $d \geqslant 2$ and $0\leqslant n\leqslant d$, or
\item $(d_1,d_2) = (1,d), (d,1)$ with $d \geqslant 2$ and $n=0,1,2$, or
\item $(d_1,d_2) = (2,2), (2,3), (3,2), (2,4), (4,2), (3,3)$ and $n=0$, or
\item $(d_1,d_2) = (2,2), (2,3), (3,2)$ and $n=1$, or
\item $(d_1,d_2) = (2,2)$ and $n=2$.
\end{enumerate}
\end{prop}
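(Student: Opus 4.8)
The plan is to translate the isomorphism into the numerical question of which $(\beta,n)$ admit a stable pair that is genuinely thickened off the zero section, and then to pin down the threshold. The embedding \eqref{emb} is a closed immersion whose image is exactly the locus of pairs $(F,s)$ with $\mathcal{I}_S\cdot F=0$ (equivalently $F=\iota_*(\text{sheaf on }S)$), where $\mathcal{I}_S\subseteq\oO_X$ is the ideal of the zero section. Since, by Proposition \ref{CMT lemma}, $L_1^{-1}$ and $L_2^{-1}$ are ample in both geometries, Proposition \ref{proper crit 1} shows $P_n(X,\beta)$ is proper and every stable pair is set-theoretically supported on $S$. For a pure one-dimensional $C\subseteq X$ with $\pi_*[C]=\beta$ one has $n=\chi(F)\geqslant\chi(\oO_C)$ for any pair $(F,s)$ with support $C$, and conversely every value $n\geqslant\chi(\oO_C)$ is realized by adding $n-\chi(\oO_C)$ points to $(\oO_C,1)$. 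Hence, setting
\[
\mu(\beta):=\min\big\{\chi(\oO_C)\ :\ C\subseteq X\text{ pure, one-dimensional},\ \pi_*[C]=\beta,\ C\not\subseteq S\big\},
\]
the map \eqref{emb} is an isomorphism precisely when $n<\mu(\beta)$. I would carry the argument out in families so as to obtain an isomorphism of schemes: the only extra point is to exclude first-order deformations off $S$, which would contribute sections of $L_1|_C$ or $L_2|_C$ along the support, and these vanish because $L_i^{-1}$ is ample; thus \eqref{emb} is an isomorphism on tangent spaces and, once bijective on points, an isomorphism.

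The bulk of the work is to compute $\mu(\beta)$, i.e.\ to match a lower bound (giving the listed isomorphisms) against explicit thickened curves (giving the non-listed failures). For the lower bound I would filter $\oO_C$ by $\mathcal{I}_S$: the graded pieces $G_k:=\mathcal{I}_S^k\oO_C/\mathcal{I}_S^{k+1}\oO_C$ are sheaves on $S$, and since $\mathrm{gr}^\bullet_{\mathcal{I}_S}\oO_X=\mathrm{Sym}_{\oO_S}(L_1^\vee\oplus L_2^\vee)$ the sum $\bigoplus_kG_k$ is a module over this algebra generated in degree $0$. Thus $G_0=\oO_{C_0}$ for $C_0:=C\cap S$, contributing $\chi(G_0)\geqslant-\tfrac12\beta_0(\beta_0+K_S)$ with $\beta_0:=[C_0]$, the supports of the $G_k$ are nested, and each higher $G_k$ is built from $G_0$ by the fibre coordinates, which are sections of the ample bundles $L_1^\vee,L_2^\vee$. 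Ampleness forces every non-zero higher piece to contribute a strictly positive amount to $\chi(\oO_C)=\sum_k\chi(G_k)$, so that $C\not\subseteq S$ costs a definite, computable amount of Euler characteristic; this is the four-fold, \emph{ample} sharpening of the nef estimate \eqref{ineqschi} used in Proposition \ref{listproj} and of the threefold argument of \cite{CKK}.

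For the matching upper bound I would construct explicit thickened curves. The basic block is a fat line: the order-$k$ thickening of a line $\Gamma\cong\mathbb{P}^1$ inside $\mathrm{Tot}_\Gamma(L_1|_\Gamma)\cong\mathrm{Tot}_{\mathbb{P}^1}(\oO(-1))$, a pure one-dimensional subscheme lying off $S$ with $\pi_*$-class $k[\Gamma]$ and computable $\chi$ (the ribbon $k=2$ over a line has $\chi=3$, exactly the first excluded case $d=2$, $n=3$ on $\mathbb{P}^2$). Gluing such a fat line to an honest curve in $S$ of the complementary class and using the B\'ezout intersection to lower $\chi$ produces thickened $C$ realizing $\mu(\beta)$; for example on $\mathbb{P}^2$ a ribbon over a line glued to a cubic gives $\chi=3+0-3=0$, so $\mu(d[H])\leqslant0$ for $d=5$ and the isomorphism fails for every $n\geqslant0$. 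Checking that these configurations are pure and carry a section with zero-dimensional cokernel is routine.

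The step I expect to be the real obstacle is making the lower bound of the second paragraph \emph{quantitatively sharp}, so that it agrees with the constructions of the third across all of the finitely many classes $d[H]$ and $d_1[H_1]+d_2[H_2]$. Because the allowed $n$ can exceed $-\tfrac12\beta(\beta+K_S)$ by more than one (e.g.\ $d=2$ on $\mathbb{P}^2$ allows $n\leqslant2$ while $\mu=3$), a merely qualitative ``thickening is strict'' statement is not enough; one must control the higher graded pieces and reducible/non-reduced supports precisely and optimize over all effective splittings of $\beta$. This optimization is exactly the case analysis recorded in the statement.
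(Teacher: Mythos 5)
Your overall strategy coincides with the paper's: reduce the question to a sharp lower bound on $\chi(\oO_C)$ for Cohen--Macaulay curves $C$ not scheme-theoretically contained in $S$ (via a filtration by the ideal of the zero section plus Riemann--Roch on the graded pieces), and match it against explicit thickened curves for the converse. Your packaging via $\mu(\beta)$ and your numerics are correct where you carry them out (the $L_1$-ribbon over a line with $\chi=3$ for $d=2$, the ribbon-plus-cubic with $\chi=0$ for $d=5$), and you correctly identify that the entire difficulty is making the lower bound quantitatively sharp. For the converse the paper instead exhibits $(\mathbb{C}^*)^4$-fixed stable pairs combinatorially; your explicit glued configurations are an acceptable substitute provided you verify purity of the unions, which is routine as you say.

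The one place your sketch does not go through as written is the lower bound itself. Filtering $\oO_C$ by $\mathcal{I}_S$ in a single step gives graded pieces $G_k$ that are quotients of $G_{k-1}\otimes(L_1^{\vee}\oplus L_2^{\vee})$, hence of $G_0\otimes\mathrm{Sym}^k(L_1^{\vee}\oplus L_2^{\vee})=\bigoplus_{i+j=k}G_0\otimes L_1^{-i}L_2^{-j}$; such a quotient is \emph{not} of the form $\oO_{C_k}\otimes(\text{single line bundle})$, so Riemann--Roch does not apply termwise and the slogan ``ampleness forces every non-zero higher piece to contribute strictly positively'' is neither precise nor sufficient. The paper circumvents exactly this by a two-step filtration: first by the ideal of the divisor $Y_1=\mathrm{Tot}_S(L_1)\subseteq X$ (whose ideal is a line bundle, so the graded pieces are $\oO_{C_j}\otimes p^*L_2^{-j}$), then by the ideal of $S\subseteq Y_1$, producing pieces $\oO_{C_{ij}}\otimes L_1^{-i}L_2^{-j}$ indexed by a single bidegree. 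The resulting inequality \eqref{key ineq} keeps track of the cross-terms $\beta_{ij}\beta_{i'j'}$ and the twists separately, and this bookkeeping is genuinely needed: for $(\mathbb{P}^1\times\mathbb{P}^1,(3,3),n=0)$ the naive estimate is off by one and the paper must additionally invoke the nestedness $C_{ij}\supseteq C_{i+1,j}$ of the supports to sharpen $\sum d_{1,ij}d_{2,i'j'}\geqslant 2$ to $\geqslant 3$. So your argument needs this refinement of the filtration (or an equivalent device) before the case analysis can be completed; with it, the rest of your plan matches the paper's proof. A minor further point: the first-order obstruction to deforming off $S$ is $\Hom(F,F\otimes L_i)$ rather than $H^0(C,L_i|_C)$, and its vanishing for non-reduced or reducible supports requires the stability/spectral-pair argument of Proposition \ref{proper crit 1}, not just negativity of the degree on each component.
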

\begin{proof}
Let $[(F,s)] \in P_n(X, \beta)$ be a stable pair with scheme theoretic support $C:=\mathrm{supp}(F)$. The stable pair $(F,s)$ is set theoretically supported on the zero section $S \subseteq X$ by Proposition \ref{proper crit 1}. Let $Y_i=\mathrm{Tot}_{S}(L_i)$ for $i=1,2$. We consider the ideals of $C \subseteq X$ and $Y_1 \subseteq X$:
\begin{align*}
J:= I_{C \subseteq X} \subseteq \oO_X, \quad I_2:= I_{Y_1 \subseteq X} \subseteq \oO_X.
\end{align*} 
Note that $I_2$ is a line bundle on $X$. Since $(F,s)$ is set theoretically supported on $S \subseteq X$ (and therefore $Y_1 \subseteq X$), there exists an $\ell \geqslant 0$ such that $J+I_2^{\ell+1}=J$ and we have
\begin{align}\label{equ sum of chi}
\chi(\oO_C)=\sum_{j=0}^{\ell} \chi\Big(\frac{J+I_2^j}{J+I_2^{j+1}}\Big).  
\end{align}
For each $j$, we have a surjective map 
\begin{align*} 
p^* L_2^{-j}\cong\frac{I_2^j}{I_2^{j+1}}\to \frac{J+I_2^j}{J+I_2^{j+1}},  
\end{align*}
where $p : Y_1 \rightarrow S$ denotes projection. Hence $\frac{J+I_2^j}{J+I_2^{j+1}} \cong  \oO_{C_j} \otimes p^* L_2^{-j}$ for some closed subscheme $C_j \subseteq Y_1$ of dimension $\leqslant 1$. 
Moreover, we have $C_j \supseteq C_{j+1}$ for all $j$.\footnote{This follows from the natural surjection
$\oO_{C_{j}} \otimes p^* L_2^{-j-1} \cong \frac{J + I_2^j}{J + I_2^{j+1}} \otimes \frac{I_2}{I_2^2} \twoheadrightarrow \frac{J+I_2^{j+1}}{J+I_2^{j+2}} \cong \oO_{C_{j+1}} \otimes p^* L_2^{-j-1}$.}
From the fact that $C$ is Cohen-Macaulay, it also follows that, when non-empty, $C_j$ is not 0-dimensional. 

For a fixed $j$, we consider the ideals of $C_j \subseteq Y_1$ and $S \subseteq Y_1$:
\begin{align*}
J_j:= I_{C_j \subseteq Y_1} \subseteq \oO_{Y_1}, \quad I_1:= I_{S \subseteq Y_1} \subseteq \oO_{Y_1}.
\end{align*} 
Note that $I_1$ is a line bundle on $Y_1$. As above, there exists an $\ell_{j} \geqslant 0$ such that $J_j+I_1^{\ell_j+1}=J_j$ and we have 
\begin{align*}
\chi(\oO_{C_{j}})=\sum_{i=0}^{\ell_j} \chi\Big(\frac{J_j+I_1^i}{J_j+I_1^{i+1}}\Big). 
\end{align*}
As above, for all $i$, we have $\frac{J_j+I_1^i}{J_j+I_1^{i+1}} \cong \oO_{C_{ij}} \otimes L_1^{-i}$ for some closed subscheme $C_{ij} \subseteq S$ of dimension $\leqslant 1$. As above, we also have $C_{ij} \supseteq C_{i+1,j}$ for all $i$. This time, we leave open the possibility that $C_{ij}$ is 0-dimensional, because $C_j$ need not be Cohen-Macaulay. Nonetheless, denoting $\beta_{ij} := [C_{ij}]$, we have
$$
\beta = \sum_{j=0}^{\ell} \sum_{i=0}^{\ell_j} \beta_{ij} \in H_2(S,\mathbb{Z}).
$$
Consider the torsion filtration
$$
0\to \mathcal{T}_0 \to \oO_{C_{ij}}\to \oO_{C_{ij}^{\mathrm{pure}}}\to 0
$$
and the exact sequence
$$
0\to \oO_S(-C^{\mathrm{pure}}_{ij})\to \oO_{S}\to \oO_{C^{\mathrm{pure}}_{ij}}\to 0.
$$
The support of $\mathcal{T}_0$ is 0-dimensional. Applying the Hirzebruch-Riemann-Roch formula gives  
\begin{align*}
\chi\Big(\frac{J_j+I_1^i}{J_j+I_1^{i+1}} \otimes p^* L_2^{-j} \Big)=\chi(\oO_{C_{ij}} \otimes L_1^{-i} \otimes L_2^{-j}) &\geqslant \chi( \oO_{C^{\mathrm{pure}}_{ij}} \otimes L_1^{-i} \otimes L_2^{-j}) \\
&=- \tfrac{1}{2}\beta_{ij}(\beta_{ij} + K_S) - (i L_1 + j L_2) \beta_{ij}.  
\end{align*}
Combining with (\ref{equ sum of chi}), we obtain 
\begin{align} 
\begin{split} \label{key ineq}
\chi(F)\geqslant\chi(\oO_C)&\geqslant - \sum_{j=0}^{\ell} \sum_{i=0}^{\ell_j} ( \tfrac{1}{2} \beta_{ij} (\beta_{ij} + K_S) + (i L_1 + j L_2) \beta_{ij}) \\ 
&   \geqslant  -\tfrac{1}{2} \beta(\beta +K_S) - \beta (L_1 + L_2) \\
&\quad + \tfrac{1}{2} \sum_{((i,j),(i',j')) \atop (i,j) \neq (i',j')} \beta_{ij} \beta_{i'j'} + \beta_{00} (L_1+L_2) + L_1 \sum_{j=1}^{\ell} \beta_{0j} + L_2 \sum_{i=1}^{\ell_0} \beta_{i0},
\end{split}
\end{align}
where we used that  $L_1$ and $L_2$ are nef line bundles. \\

\noindent \textbf{Case 1.} Let $(S,L_1,L_2)=(\mathbb{P}^2,\oO(-1),\oO(-2))$, $\beta = d [H]$ with $d \geqslant 1$, and $n \geqslant 0$. Suppose there exists an element $[(F,s)] \in P_n(X,\beta) \setminus P_n(S,\beta)$. We use the notation above for its scheme theoretic support $C$ and the associated schemes $C_{ij}$. Let $\beta_{ij} = d_{ij} [H]$, then \eqref{key ineq} gives 
\begin{align*}
\chi(F) &\geqslant - \tfrac{1}{2} d^2 + \tfrac{9}{2} d - 3d_{00} - \sum_{j=1}^{\ell} d_{0j} - 2 \sum_{i=1}^{\ell_0} d_{i0} + \tfrac{1}{2} \sum_{((i,j),(i',j')) \atop (i,j) \neq (i',j')} d_{ij} d_{i'j'} \\
&\geqslant - \tfrac{1}{2} d^2 + \tfrac{5}{2} d - d_{00}  + \tfrac{1}{2} \sum_{((i,j),(i',j')) \atop (i,j) \neq (i',j')} d_{ij} d_{i'j'} \\
& \geqslant  - \tfrac{1}{2} d^2 + \tfrac{5}{2} d,
\end{align*}
where the last inequality uses that there exists an $(i,j) \neq (0,0)$ with $d_{ij} \geqslant 1$, because we assumed $C$ is not scheme theoretically supported in the zero section $S \subseteq X$. Hence $n\leqslant -\frac{1}{2}d^2+\frac{5}{2}d-1$ implies
$P_n(S,\beta) \cong P_n(X,\beta)$.
In particular, we find that for the cases (1)--(3) we have $P_n(S,\beta) \cong P_n(X,\beta)$. For case (1) this is obvious from Proposition \ref{proper crit 1} and the fact that $\beta$ is irreducible. 

For $\beta,n$ other than (1)--(3), it is easy to construct a $(\mathbb{C}^*)^4$-fixed stable pair in $$P_n(X,\beta) \setminus P_n(S,\beta)$$ using the combinatorial description of stable pairs in \cite{PT2, CK2}, where $(\mathbb{C}^*)^4$ denotes the torus of the toric Calabi-Yau 4-fold $X$. \\

\noindent \textbf{Case 2.} Let $(S,L_1,L_2)=(\mathbb{P}^1\times\mathbb{P}^1,\oO(-1,-1),\oO(-1,-1))$, $\beta=d_{1}[H_1]+d_{2}[H_2]$ for some $d_1,d_2 \geqslant 0$ not both zero, and $n \geqslant 0$. Suppose there exists an element $[(F,s)] \in P_n(X,\beta) \setminus P_n(S,\beta)$. We use the notation above for its scheme theoretic support $C$ and the associated schemes $C_{ij}$.  Let $\beta_{ij} = d_{1,ij} [H_1] + d_{2,ij} [H_2]$, then \eqref{key ineq} gives
\begin{align}
\begin{split} \label{P1xP1 ineqs}
\chi(F) &\geqslant 3d_1+3d_2-d_1d_2 - 2d_{1,00} - 2d_{2,00} - \sum_{j = 1}^{\ell} (d_{1,0j} + d_{2,0j}) - \sum_{i=1}^{\ell_0} (d_{1,i0} + d_{2,i0}) \\
&\quad + \tfrac{1}{2}  \sum_{((i,j),(i',j')) \atop (i,j) \neq (i',j')}  (d_{1,ij} d_{2,i'j'} + d_{1,i'j'} d_{2,ij}) \\
&\geqslant 2d_1+2d_2-d_1d_2 - d_{1,00} - d_{2,00} + \sum_{((i,j),(i',j')) \atop (i,j) \neq (i',j')}  d_{1,ij} d_{2,i'j'}  \\
&\geqslant d_1 + d_2 - d_1 d_2 + 1 + \sum_{((i,j),(i',j')) \atop (i,j) \neq (i',j')}  d_{1,ij} d_{2,i'j'},
\end{split}
\end{align}
where the last inequality uses that $C$ does not lie scheme theoretically in $S$. Suppose $d_1, d_2 \geqslant 2$, then
\begin{equation} \label{mixineq}
\sum_{((i,j),(i',j')) \atop (i,j) \neq (i',j')}  d_{1,ij} d_{2,i'j'} \geqslant 2.
\end{equation}
Therefore $n \leqslant d_1+d_2 - d_1d_2 + 2$ implies $P_n(S,\beta) \cong P_n(X,\beta)$. In particular, for $\beta,n$ as in (4)--(6), \emph{except} for $(d_1,d_2)=(3,3)$ and $n=0$ (!), we deduce that $P_n(S,\beta) \cong P_n(X,\beta)$. Cases (1)--(3) can be found from \eqref{P1xP1 ineqs} by a similar reasoning. 

For $(d_1,d_2) = (3,3)$, we still use \eqref{P1xP1 ineqs}, but we need to sharpen \eqref{mixineq}. Recall that the schemes $C_j$ and $C_{ij}$ constructed in the first part of the proof are nested.
This implies $d_{1,ij} \geqslant d_{1,i+1j}$ and $d_{2,ij} \geqslant d_{2,i+1j}$ for all $i,j$. Using these inequalities for $(d_1,d_2) = (3,3)$, one can show that 
\begin{equation*} 
\sum_{((i,j),(i',j')) \atop (i,j) \neq (i',j')}  d_{1,ij} d_{2,i'j'}  \geqslant 3.
\end{equation*}
It follows that for $[(F,s)] \in P_n(X,(3,3)) \setminus P_n(S,(3,3))$, we have $\chi(F) \geqslant 1$. Hence $P_0(S,(3,3)) \cong P_0(X,(3,3))$.

For $\beta, n$ other than (1)--(6),  it is easy to construct a $(\mathbb{C}^*)^4$-fixed stable pair in $P_n(X,\beta) \setminus P_n(S,\beta)$ using the combinatorial description of stable pairs in \cite{PT2, CK2}. 
\end{proof}

\begin{rmk} \label{CKK for nef cases}
For $(S,L_1,L_2)$ as in Examples \ref{nef case 1}--\ref{nef case 4}, we found \emph{all} cases for which $n \geqslant 0$, $P_n(X,\beta)$ is proper, and $P_n(S,\beta) \neq \varnothing$ (Propositions \ref{properness prop} and \ref{listproj}). A similar reasoning as in the proof of Proposition \ref{stable pair on del pezzo}  (using \eqref{key ineq}) can be applied to find out when $P_n(X,\beta) \cong P_n(S,\beta)$. In the following cases, we have $n \geqslant 0$, $P_n(X,\beta) \cong P_n(S,\beta)$:
\begin{itemize}
\item $(S,L_1,L_2)=(\mathbb{P}^1\times \mathbb{P}^1, \oO(-1,0), \oO(-1,-2))$, $(d_1,d_2)=(0,1)$ and any $n \geqslant 0$, or $(d_1,d_2)= (0,d)$ for any $d \geqslant 2$ and $n=d$ , or $(d_1,d_2) = (2,2), (2,3), (3,2), (2,4)$ and $n=0$, or $(d_1,d_2) = (2,2), (2,3), (1,d), (d,1)$ for any $d \geqslant 1$ and $n=1$, or $(d_1,d_2) = (1,d)$ for any $d \geqslant 2$ and $n=2$.
\item $(S,L_1,L_2)=(\mathbb{F}_1, \oO(-1,-1), \oO(-1,-2))$, $(d_1,d_2)=(0,1)$ and any $n \geqslant 0$, or $(d_1,d_2)= (0,d)$ for any $d \geqslant 2$ and $n=d$ , or $(d_1,d_2) = (2,3), (2,4), (3,3), (2,5)$ and $n=0$, or $(d_1,d_2) = (2,2), (2,3), (2,4)$ and $n=1$, or $(d_1,d_2) = (1,d)$ for $n=1,2$ and any $d \geqslant n$.
\item $(S,L_1,L_2)=(\mathbb{F}_1, \oO(0,-1), \oO(-2,-2))$, $(d_1,d_2) = (2,3), (2,4), (3,3)$ and $n=0$, or $(d_1,d_2) = (2,2), (2,3), (1,d)$ with $d \geqslant 1$ and $n=1$.
\item $(S,L_1,L_2)=(\mathbb{F}_2, \oO(-1,-2), \oO(-1,-2))$, $(d_1,d_2)=(0,1)$ and any $n \geqslant 0$, or $(d_1,d_2)= (0,d)$ for any $d \geqslant 2$ and $n=d$ , or $(d_1,d_2) = (2,4), (2,5), (2,6)$ and $n=0$, or $(d_1,d_2) = (2,3), (2,4), (2,5)$ and $n=1$, or $(d_1,d_2) = (1,d)$ for $n=1,2$ and any $d \geqslant n$.
\end{itemize}
Furthermore, in all cases listed in Examples \ref{nef case 1}--\ref{nef case 4} but \emph{not} in the above list, one can easily construct a $(\mathbb{C}^*)^4$-fixed stable pair in $P_n(X,\beta) \setminus P_n(S,\beta)$ using the combinatorial description of stable pairs on toric varieties \cite{PT2, CK2}. \\ 
${}$ \\
\textbf{Conclusion.}
For any $(S,L_1,L_2)$ with $L_1 \otimes L_2 \cong K_S$, $L_1^{-1}, L_2^{-1}$ non-trivial and nef, and $S$ minimal and toric, we have
classified \emph{all} $\beta\in H_2(S,\mathbb{Z})$ and $n \geqslant 0$ such that $P_n(X,\beta) \cong P_n(S,\beta) \neq \varnothing$. \\

In the next section, we develop a method to determine the stable pair invariants $P_{n,\beta}([\pt])$ in all of these cases (tabulated in Appendix \ref{tables}).
\end{rmk}

\section{Invariants}

\subsection{Virtual classes of relative Hilbert schemes} \label{sec:relHilb}

For $S$ a smooth projective surface, $\beta \in H_2(S,\mathbb{Z})$, and $n \in \mathbb{Z}$, the moduli space $P_n(S,\beta)$ has a nice description in terms of relative Hilbert schemes due to Pandharipande-Thomas \cite{PT3}. Given a stable pair $[(F,s)] \in P_n(S,\beta)$, one has a short exact sequence
$$
0 \rightarrow \oO_C \rightarrow F \rightarrow Q \rightarrow 0,
$$
where $C$ is the scheme theoretic support of $F$. Dualizing on $C$ yields a short exact sequence
$$
0 \rightarrow F^* \rightarrow \oO_C \rightarrow  \eE xt^1(Q,\oO_C) \rightarrow 0,
$$
where we used $\eE xt^1(F,\oO_C) = 0$ by \cite[Lem.~B.2]{PT3}. Hence $\eE xt^1(Q,\oO_C) \cong \oO_Z$ for some 0-dimensional subscheme $Z \subseteq C$ of length
$$
m = n + g(\beta) - 1 = n + \tfrac{1}{2}\beta(\beta+K_S).
$$
As shown in \cite[Prop. B.8.]{PT3}, the family version of this argument gives an isomorphism
\begin{align} \label{iso rel hilb}
P_{n}(S,\beta)\cong \Hilb^m(\mathcal{C}/H_\beta), 
\end{align}
where $\Hilb^m(\mathcal{C}/H_\beta)$ denotes the relative Hilbert scheme of $m$ points on the fibres of the universal curve
$
\mathcal{C} \rightarrow H_\beta
$
and $H_\beta$ denotes the Hilbert scheme of effective divisors on $S$ in class $\beta$. The description in terms of relative Hilbert schemes helps to establish smoothness. Although we do not need it for this paper, we include the following observation.
\begin{prop} 
In all the cases listed in Proposition  \ref{stable pair on del pezzo} and Remark \ref{CKK for nef cases}, $P_n(S,\beta)$ is smooth.
\end{prop}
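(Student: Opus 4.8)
The plan is to run everything through the isomorphism \eqref{iso rel hilb}, $P_n(S,\beta)\cong\Hilb^m(\cC/H_\beta)$, and the realization of $\Hilb^m(\cC/|\beta|)$ as the incidence locus in $S^{[m]}\times|\beta|$ recalled in Section \ref{sec:relHilb}; since $b_1(S)=0$ we have $H_\beta=|\beta|=\mathbb{P}^{\chi(\beta)-1}$, and a point is a pair $(Z,[C])$ with $Z\in S^{[m]}$, $C\in|\beta|$, $Z\subseteq C$. I would prove smoothness by exhibiting $\Hilb^m(\cC/|\beta|)$, in each listed case, as a smooth fibration using one of the two projections $q\colon(Z,[C])\mapsto Z$ to $S^{[m]}$ and $\rho\colon(Z,[C])\mapsto[C]$ to $|\beta|$. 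The cases split naturally according to whether $n$ is bounded (the ``big'' classes) or unbounded (the irreducible classes $\beta$ of arithmetic genus $0$, plus $(1,1)$ on $\mathbb{P}^1\times\mathbb{P}^1$).

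For the bounded cases I would use $q$. The fibre of $q$ over a fixed $Z$ is $\mathbb{P}(H^0(S,\iI_Z\otimes\oO_S(\beta)))$, where $\iI_Z$ is the ideal of $Z$. If $\oO_S(\beta)$ is $(m-1)$-very ample, then the restriction $H^0(\oO_S(\beta))\to H^0(\oO_Z(\beta))$ is surjective for every length-$m$ subscheme $Z$; combined with $H^1(\oO_S(\beta))=0$ (which holds by Kawamata--Viehweg, as $\oO_S(\beta)$ is nef and $-K_S$ is nef and big) this gives $H^1(\iI_Z\otimes\oO_S(\beta))=0$ for \emph{all} $Z$, so by Riemann--Roch $h^0(\iI_Z\otimes\oO_S(\beta))=\chi(\beta)-m$ is constant. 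By cohomology and base change $\pi_*(\iI\otimes\oO_S(\beta))$ is then locally free of rank $\chi(\beta)-m$ on the smooth scheme $S^{[m]}$ (with $\pi\colon S\times S^{[m]}\to S^{[m]}$ the projection and $\iI$ the universal ideal), and $\Hilb^m(\cC/|\beta|)=\mathbb{P}(\pi_*(\iI\otimes\oO_S(\beta)))$ is a projective bundle over $S^{[m]}$, hence smooth. The input needed is the $(m-1)$-very ampleness of $\oO_S(\beta)$, which I would read off from the standard bounds ($\oO_{\mathbb{P}^2}(d)$ is $d$-very ample, $\oO_{\mathbb{P}^1\times\mathbb{P}^1}(a,b)$ is $\min(a,b)$-very ample, and the analogous intersection-theoretic criteria on $\mathbb{F}_1,\mathbb{F}_2$) after computing $m=n+\tfrac12\beta(\beta+K_S)$; in the bounded-$n$ cases of Proposition \ref{stable pair on del pezzo} and Remark \ref{CKK for nef cases} one finds $m\leqslant 3$, and the required very ampleness always holds (the degenerate $m=0$ cases are just $|\beta|$).

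For the unbounded cases—$\beta=[H]$ on $\mathbb{P}^2$, the ruling classes such as $(0,1)$, and $(1,1)$ on $\mathbb{P}^1\times\mathbb{P}^1$—the class $\oO_S(\beta)$ is only globally generated, so $q$ is not dominant and the argument above breaks; here I would use $\rho$ instead. Since $|\beta|$ is base point free, $\cC\subseteq S\times|\beta|$ is a projective bundle over $S$ and hence smooth. When every member of $|\beta|$ is a smooth rational curve (lines on $\mathbb{P}^2$, fibres of a ruling), $\cC\to|\beta|$ is a $\mathbb{P}^1$-bundle and $\Hilb^m(\cC/|\beta|)$ is the associated $\mathbb{P}^m$-bundle over $|\beta|$, so it is smooth. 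For $\beta=(1,1)$ the members are smooth rational curves or the reducible curves $(1,0)+(0,1)$, which meet in a single node, so every member is at worst one-nodal; I would then invoke the standard fact that the relative Hilbert scheme of points of a family of at-worst-nodal curves whose total space is smooth is itself smooth (étale-locally the family is the versal smoothing $xy=t$ of the node, whose relative Hilbert scheme of points is a direct smooth local computation). This disposes of $(1,1)$ for all $n$ uniformly.

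The main obstacle is precisely the $(1,1)$ case, and more generally any listed case whose generic fibre $C^{[m]}$ of $\rho$ is singular while the total space is smooth: there no bundle structure is available and one must use smoothness of $\cC$ together with the versal-deformation (local node) input rather than a direct fibration argument. The conceptual reason both regimes succeed is that the only obstruction to deforming $C$ while keeping $Z\subseteq C$ is the group $H^1(\iI_Z\otimes\oO_S(\beta))$, which is killed either by very ampleness (big classes) or, in the presence of the extra deformations of $Z$ inside the smooth curve $C$, by smoothness of $\cC$ at the node (the class $(1,1)$); using $b_1(S)=p_g(S)=0$ and nefness of $-K_S$ one checks this suffices for the full deformation theory of $(Z\subseteq C)$ to be unobstructed. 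Beyond this, the remaining work is bookkeeping: verifying the precise $k$-very ampleness on $\mathbb{F}_1$ and $\mathbb{F}_2$, where positivity is governed delicately by the negative section $B$, and confirming case by case which of the two regimes applies.
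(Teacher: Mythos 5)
Your proposal is correct and, for the bulk of the cases, coincides with the paper's argument: the paper also fibres $\Hilb^m(\cC/|\beta|)$ over $S^{[m]}$, identifies the fibres with projectivized kernels of the evaluation maps $H^0(\oO_S(\beta))\to H^0(\oO_Z(\beta))$, and reduces smoothness to $(m-1)$-very ampleness via the Beltrametti--Sommese criteria, falling back on the fibration over the linear system (with fibres $\Sym^m(\mathbb{P}^1)\cong\mathbb{P}^m$) for the genus-zero irreducible classes. You are more careful than the paper in two useful places. First, you justify the projective-bundle structure by cohomology and base change rather than just asserting that equidimensional projective-space fibres over a smooth base give smoothness. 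Second, and more substantially, you isolate the class $(1,1)$ on $\mathbb{P}^1\times\mathbb{P}^1$ for $n\geqslant 4$: there $m=n-1\geqslant 3$ exceeds the $1$-very ampleness of $\oO(1,1)$, and the fibration over $|\beta|\cong\mathbb{P}^3$ is not a $\mathbb{P}^m$-bundle because of the reducible (one-nodal) members, so neither of the paper's two mechanisms applies literally; the paper disposes of this with ``the remaining cases follow similarly,'' whereas your appeal to smoothness of the relative Hilbert scheme of a family of at-worst-nodal curves with smooth total space (equivalently, versality of $|\oO(1,1)|$ at the node, which holds since the system is base point free) is exactly the missing ingredient and is correct. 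One small inaccuracy: $\oO_S(\beta)$ is not nef in every bounded case (e.g.\ $\beta=2B+3F$ on $\mathbb{F}_2$ has $\beta\cdot B=-1$), so Kawamata--Viehweg does not apply verbatim; this is harmless, since constancy of $h^0(\iI_Z\otimes\oO_S(\beta))$ already follows from surjectivity of the evaluation maps (i.e.\ from $(m-1)$-very ampleness) together with Grauert over the reduced base $S^{[m]}$, and the vanishing $H^1(\oO_S(\beta))=0$ can in any case be checked directly in each listed case.
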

\begin{proof}
The method in this proof was also used in \cite{KST}. Let $S = \mathbb{P}^2$, then for any $\beta = d[H]$ with $d \geqslant 1$, and any $n \in \mathbb{Z}$, we have a morphism
\begin{equation} \label{map to Hilb}
P_n(S,\beta) \cong \Hilb^m(\mathcal{C}/|\oO(d)|) \rightarrow S^{[m]},
\end{equation}
where $m = n + \tfrac{1}{2}d(d-3)$. The fibre over $Z \in S^{[m]}$ is the projectivization of the kernel of the evaluation map $H^0(\mathbb{P}^2,\oO(d)) \rightarrow H^0(Z,\oO(d)|_Z)$. It suffices to show that for $n$ and $d \neq 1$ as in Proposition  \ref{stable pair on del pezzo}, this map is surjective. Then it follows that the fibres of \eqref{map to Hilb} are equi-dimensional projective spaces and $P_n(S,\beta)$ is smooth, because $S^{[m]}$ is also smooth. Surjectivity of the evaluation map for all $Z \in S^{[m]}$ is equivalent to $(m-1)$-very ampleness of $\oO(d)$ (by definition, \cite{BS}). Beltrametti-Sommese showed that $\oO(d)$ is $(m-1)$-very ample if and only if $m-1 \leqslant d$, i.e.
$$
n \leqslant d - \tfrac{1}{2}d(d-3) + 1.
$$
This inequality is satisfied for all $n$ and $d \neq 1$ in Cases (2), (3) of Proposition  \ref{stable pair on del pezzo}. Smoothness of $P_n(S,\beta)$ for $d=1$ and any $n$ is clear, because in this case the fibres of \eqref{map to Hilb} are $\mathrm{Sym}^m(\mathbb{P}^1) \cong \mathbb{P}^m$. 

For  $S = \mathbb{P}^1 \times \mathbb{P}^1$, $\oO(d_1 H_1 + d_2 H_2)$ is $k$-very ample if and only if $k \leqslant \mathrm{min}\{d_1,d_2\}$ \cite{BS}. For $S = \mathbb{F}_a$ (for any $a \geqslant 1$), $\oO(d_1 B + d_2 F)$ is $k$-very ample if and only if $k \leqslant \mathrm{min}\{d_1,d_2 - ad_1\}$ \cite{BS}. The proof in the remaining cases of Proposition \ref{stable pair on del pezzo} and Remark \ref{CKK for nef cases} then follows similarly.
\end{proof}
Let $\mathcal{Z}\subseteq S\times S^{[m]}$ be the universal  subscheme and denote the pull-back of 
$\mathcal{Z}$ to $S\times S^{[m]}\times H_\beta$ by the same symbol (and similarly for $\mathcal{C}\subseteq S\times H_\beta$).
Consider the rank $m$ vector bundle 
\begin{align*}
\oO(\mathcal{C})^{[m]}:=\pi_*(\oO(\mathcal{C})|_{\mathcal{Z}}) 
\end{align*}
on $S^{[m]}\times H_\beta$, where $\pi: S\times S^{[m]}\times H_\beta \to S^{[m]}\times H_\beta$ is the projection. By \cite[App.~A]{KT1}, there exists a tautological section $s$ of $\oO(\mathcal{C})^{[m]}$ cutting out $\Hilb^m(\mathcal{C}/H_\beta)$ from its ambient space
\begin{align*}\xymatrix{& \oO(\mathcal{C})^{[m]} \ar[d]_{ }  &  &  \\
  s^{-1}(0)\cong \Hilb^m(\mathcal{C}/H_\beta) \ar@{^(->}[r]^{\quad \quad \jmath} & S^{[m]}\times H_\beta.  \ar@/_1pc/[u]_{s} &  &  }
\end{align*}
In general $H_\beta$ is not smooth (or smooth but not of expected dimension). Therefore, this construction only provides a \emph{relative} perfect obstruction theory on $\Hilb^m(\mathcal{C}/H_\beta) \rightarrow H_\beta$. The Hilbert scheme of divisors $H_\beta$ has a natural perfect obstruction theory
$$
(\dR p_* \oO_{\mathcal{C}}(\mathcal{C}))^\vee \rightarrow \mathbb{L}_{H_\beta},
$$
where $p : S \times H_\beta \to H_\beta$ denotes projection. This is the perfect obstruction theory used to define the Poincar\'e/Seiberg-Witten invariants of $S$ in \cite{DKO, CK}. Taken together, these provide an \emph{absolute} perfect obstruction theory on $P_n(S,\beta) \cong \Hilb^m(\mathcal{C}/H_\beta)$ by \cite[App.~A.3]{KT1}. The virtual tangent bundle of this absolute perfect obstruction theory is
$$
\dR \hH om_{\pi_S}(\mathbb{I}_S^\mdot, \mathbb{F})
$$
where $\mathbb{I}_S^\mdot = \{\oO \to \mathbb{F}\}$ denotes the universal stable pair on $S \times P_n(S,\beta)$ and $\pi_S : S \times P_n(S,\beta) \to P_n(S,\beta)$ is the projection. By \cite[Prop.~2.1]{Koo}, the resulting virtual class satisfies
\begin{align} \label{virt class rel Hilb} 
\jmath_*[\Hilb^m(\mathcal{C}/H_\beta)]^{\mathrm{vir}}= (S^{[m]}\times [H_\beta]^{\mathrm{vir}}) \cdot e\big(\oO(\mathcal{C})^{[m]}\big). 
\end{align}
The corresponding virtual class on $P_n(S,\beta)$, via the isomorphism \eqref{iso rel hilb}, is denoted by $[P_n(S,\beta)]^{\mathrm{vir}}$.

\subsection{Comparison of virtual classes} \label{sec:comparison}

Let $S$ be a smooth projective surface and $L_1, L_2 \in \Pic(S)$ such that $L_1 \otimes L_2 \cong K_S$. We consider the local surface $X = \mathrm{Tot}_S(L_1 \oplus L_2)$, which is a Calabi-Yau 4-fold. 
Fix $n \in \mathbb{Z}$ and $\beta \in H_2(S,\mathbb{Z})$ such that $P_n(X,\beta)$ is proper. Then it has a virtual class
\begin{equation} \label{virtual class}
[P_n(X, \beta)]^{\rm{vir}}\in H_{2n}\big(P_n(X, \beta),\mathbb{Z}\big),
\end{equation}
in the sense of Borisov-Joyce \cite{BJ}, which depends on a choice of orientation on $P_n(X,\beta)$. In order to apply \cite{BJ} to our case, we need  the existence of a $(-2)$-shifted symplectic structure on $P_n(X,\beta)$ and an orientability result as reviewed in Section \ref{review dt4}.
The existence of a $(-2)$-shifted symplectic structure on $P_n(X,\beta)$ was shown in \cite[Thm.~7.3.2]{Bus} and \cite[Thm.~4.0.8]{Pre}. By \cite[Thm.~5.3]{CMT2}, the existence of orientations on $P_n(X,\beta)$ can be reduced to the existence of orientations on the moduli stack $\mathfrak{M}_n(X, \beta)$ of coherent sheaves $F$ on $X$ with 1-dimensional proper support of class $\beta$ and $\chi(F) = n$. 
Taking $Y = \mathrm{Tot}_S(L_1)$, we have $X = \mathrm{Tot}_Y(K_Y)$. By considering the derived enhancement of $\mathfrak{M}_n(X, \beta)$, it is the $(-2)$-shifted cotangent bundle of a derived moduli stack of sheaves on $Y$. Therefore $\mathfrak{M}_n(X, \beta)$ has an orientation 
(see e.g. \cite[Lem.~4.3]{Toda} for a similar argument in the case of Calabi-Yau threefolds).

We denote by $[\mathrm{pt}] \in H^4(X,\mathbb{Z})$ the pull-back along $\pi : X \rightarrow S$ of the Poincar\'e dual of the point class on $S$. Using the same notation as in Section \ref{sect on pair inv on cpt CY4}, we define stable pair invariants
\begin{align}\label{glob inv}
P_{n,\beta}([\pt]):=&\int_{[P_n(X,\beta)]^{\rm{vir}}}\tau([\pt])^n \in \mathbb{Z}. 
\end{align}
When $n=0$, we simply write $P_{0,\beta}:=P_{0,\beta}([\pt])$.
  
Assuming $P_n(X,\beta) \cong P_n(S,\beta)$, we can compare the virtual class \eqref{virtual class} to the virtual class on the relative Hilbert scheme \eqref{virt class rel Hilb} studied in \cite{KT1, KT2}. In Proposition \ref{stable pair on del pezzo} and Remark \ref{CKK for nef cases} we gave a list of examples where this assumption is satisfied.
\begin{prop}\label{compare virtual class}
Let $S$ be a smooth projective surface, $L_1, L_2 \in \Pic(S)$ such that $L_1 \otimes L_2 \cong K_S$ and let $X = \mathrm{Tot}_S(L_1 \oplus L_2)$. Suppose $\beta \in H_2(S,\mathbb{Z})$ and $n \geqslant 0$ are chosen such that $P_n(X,\beta) \cong P_n(S,\beta)$. Then there exists a choice of orientation such that 
\begin{align*}
[P_n(X,\beta)]^{\mathrm{vir}}=(-1)^{\beta \cdot L_2 +n} \cdot e\big(-\dR\hH om_{\pi_{P_{S}}}(\mathbb{F}, \mathbb{F} \boxtimes L_1)\big) \cdot [P_n(S,\beta)]^{\mathrm{vir}}.   
\end{align*}
Here $[P_n(S,\beta)]^{\mathrm{vir}}$ is the virtual class induced from the relative Hilbert scheme (Section \ref{sec:relHilb}), $\mathbb{I}_S^{\mdot}=\{\oO \to \mathbb{F}\}$ denotes the universal stable pair on $S \times P_n(S, \beta)$, and $\pi_{P_{S}} \colon S \times P_n(S, \beta) \to P_n(S, \beta)$ is the projection.
The sign results from a preferred choice of orientation. 
\end{prop}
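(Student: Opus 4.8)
The plan is to factor $X$ as $X = \mathrm{Tot}_Y(K_Y)$, where $Y := \mathrm{Tot}_S(L_1)$ is a smooth quasi-projective $3$-fold with $K_Y \cong \pi_Y^*L_2$ ($\pi_Y : Y \to S$), and to perform two successive reductions along the two fibre directions: a Calabi-Yau $4$-fold to $3$-fold reduction $X \to Y$ (the $L_2$-direction), and a $3$-fold to surface reduction $Y \to S$ (the $L_1$-direction). First I would note that the hypothesis $P_n(X,\beta) \cong P_n(S,\beta)$ splits the composite of closed embeddings $P_n(S,\beta) \hookrightarrow P_n(Y,\beta) \hookrightarrow P_n(X,\beta)$, forcing $P_n(S,\beta) \cong P_n(Y,\beta) \cong P_n(X,\beta)$. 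In particular $P_n(Y,\beta)$ is proper and every pair is scheme-theoretically supported on the zero section $S$.

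\emph{Step A ($X \to Y$).} Writing $j : Y \hookrightarrow X$ with $N_{Y/X} \cong K_Y$, the Koszul resolution gives $\dL j^* j_* E \cong E \oplus (E \otimes K_Y^{-1})[1]$, so together with Serre duality on the $3$-fold $Y$ one obtains $\Ext^2_X(\mathbb{I}^\mdot,\mathbb{I}^\mdot) \cong V \oplus V^*$, where $V := \Ext^2_Y(\mathbb{I}_Y^\mdot,\mathbb{I}_Y^\mdot)$ is a maximal isotropic subspace for the Serre pairing $Q$. As recalled in Section \ref{sec:comparison}, this exhibits $P_n(X,\beta)$ as the classical truncation of the $(-2)$-shifted cotangent bundle of the quasi-smooth derived moduli of pairs on $Y$, i.e.\ case (3) of the Borisov-Joyce/Cao-Leung construction of Section \ref{review dt4}. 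The half-Euler reduction then recovers the virtual class of the $3$-fold pairs perfect obstruction theory on $P_n(Y,\beta)$, and, following \cite[Lem.~3.1]{CMT2} and \cite{CaoFano}, the preferred orientation yields
\begin{equation*}
[P_n(X,\beta)]^{\mathrm{vir}} = (-1)^{\beta \cdot L_2 + n} \cdot [P_n(Y,\beta)]^{\mathrm{vir}}_{\mathrm{pair}},
\end{equation*}
with $\beta \cdot L_2 \equiv \beta \cdot c_1(Y) \pmod 2$.

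\emph{Step B ($Y \to S$).} Next I would compare the $3$-fold PT theory on $Y$ with the surface theory on $S$ via $N_{S/Y} \cong L_1$. The PT virtual tangent on $Y$ is $\dR\hH om_{\pi_{P_Y}}(\mathbb{I}_Y^\mdot, \mathbb{F}_Y)$, where $\mathbb{F}_Y$ is the pushforward of $\mathbb{F}$ to $Y$. Using $\dL\iota_Y^* \iota_{Y*}\mathbb{F} \cong \mathbb{F} \oplus (\mathbb{F} \otimes L_1^{-1})[1]$ together with the triangle $\mathbb{I}^\mdot \to \oO \to \mathbb{F}$ on each of $Y$ and $S$, one checks that the connecting map to $\dR\pi_{P_S*}\mathbb{F}$ involves only the untwisted summand, so the resulting triangle splits and gives, in K-theory,
\begin{equation*}
T^{\mathrm{vir}}_{P_n(Y,\beta)} = T^{\mathrm{vir}}_{P_n(S,\beta)} - \dR\hH om_{\pi_{P_S}}(\mathbb{F}, \mathbb{F} \boxtimes L_1),
\end{equation*}
where $T^{\mathrm{vir}}_{P_n(S,\beta)} = \dR\hH om_{\pi_{P_S}}(\mathbb{I}_S^\mdot, \mathbb{F})$ is the virtual tangent of the relative-Hilbert-scheme obstruction theory of Section \ref{sec:relHilb}. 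By the functoriality of the virtual class under this splitting of obstruction theories, as in \cite[App.~A]{KT1} and \cite[Prop.~2.1]{Koo}, this yields
\begin{equation*}
[P_n(Y,\beta)]^{\mathrm{vir}}_{\mathrm{pair}} = e\big(-\dR\hH om_{\pi_{P_S}}(\mathbb{F}, \mathbb{F} \boxtimes L_1)\big) \cdot [P_n(S,\beta)]^{\mathrm{vir}},
\end{equation*}
the Euler class being that of the virtual obstruction bundle $-\dR\hH om_{\pi_{P_S}}(\mathbb{F}, \mathbb{F} \boxtimes L_1)$. Combining Steps A and B gives the claim.

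The decompositions above are formal and Step B is a standard reduction; the genuinely delicate point, which I would settle last, is the orientation sign in Step A. One must check that the Borisov-Joyce orientation torsor on $P_n(X,\beta)$ admits a preferred element for which the half-Euler reduction is compatible with the complex orientation of the $3$-fold PT obstruction theory, and then compute the resulting sign to be $(-1)^{\beta \cdot L_2 + n}$. This amounts to tracking orientations through the isomorphism $\Ext^2_X \cong V \oplus V^*$ and the Serre pairing $Q$ exactly as in \cite{CaoFano}, and it is where essentially all the real work lies.
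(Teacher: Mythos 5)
Your overall strategy coincides with the paper's: factor $X = \mathrm{Tot}_Y(K_Y)$ with $Y = \mathrm{Tot}_S(L_1)$ and reduce in two steps, first from the $4$-fold class to a $3$-fold pairs class on $Y$, then from $Y$ down to $S$ by splitting off an excess bundle; your Step B is in outline the paper's argument. But there are two genuine gaps.

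First, in Step A your identification of the maximal isotropic subspace is incorrect, and the argument offered for it does not apply. The dimensional-reduction isomorphism $\dL j^* j_* E \cong E \oplus (E \otimes K_Y^{-1})[1]$ computes $\Ext^{\bullet}_X(j_*E, j_*E)$ only for objects pushed forward from $Y$; the pair complex $I_X^\mdot = \{\oO_X \to j_*\iota_*F\}$ contains $\oO_X$, which is not supported on $Y$, so you cannot conclude $\Ext^2_X(I_X^\mdot,I_X^\mdot)_0 \cong \Ext^2_Y(I_Y^\mdot,I_Y^\mdot) \oplus \Ext^2_Y(I_Y^\mdot,I_Y^\mdot)^{\vee}$ this way. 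The paper instead forms the cone $T$ of $\RHom_Y(I_Y^\mdot,\iota_*F) \to \RHom_X(I_X^\mdot,I_X^\mdot)_0[1]$, identifies $T \cong \RHom_Y(I_Y^\mdot,\iota_*F)^{\vee}[-2]$, and thereby exhibits $\Ext^1_Y(I_Y^\mdot,\iota_*F)$ --- the obstruction space of the Le Potier \emph{pairs} obstruction theory, not of the $\Ext^{\bullet}(I,I)_0$ theory --- as a half-dimensional isotropic subspace of $\Ext^2_X(I_X^\mdot,I_X^\mdot)_0$. Your version is also internally inconsistent: you set $V = \Ext^2_Y(\mathbb{I}_Y^\mdot,\mathbb{I}_Y^\mdot)$ but then assert that the half-Euler reduction lands on the \emph{pairs} virtual class.

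Second, you use the hypothesis $P_n(X,\beta)\cong P_n(S,\beta)$ only for properness, whereas the paper extracts from it (by comparing Zariski tangent spaces, $\Ext^0_S(I_S^\mdot,F)\cong \Ext^1_Y(I_Y^\mdot,I_Y^\mdot)_0$, through the adjunction splitting $\dL\iota^*I_Y^\mdot \cong I_S^\mdot \oplus F\otimes L_1^{-1}$) the vanishings $\Hom_S(F,F\otimes L_1)=\Hom_S(F,F\otimes L_2)=0$. These are what make both steps run: they give $\Ext^2_Y(I_Y^\mdot,\iota_*F)=0$, hence the Le Potier obstruction theory on $Y$ is actually perfect (without this, ``the $3$-fold pairs perfect obstruction theory'' invoked in your Step A is not defined), and they show that $-\dR\hH om_{\pi_{P_S}}(\mathbb{F},\mathbb{F}\boxtimes L_1)\cong \eE xt^1_{\pi_{P_S}}(\mathbb{F},\mathbb{F}\boxtimes L_1)$ is locally free, without which the Euler class in your Step B is not the Euler class of a vector bundle and the comparison theorem of Siebert does not apply. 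You are right that the orientation sign is delicate, but it is not the only place where real work is required.
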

\begin{proof}
Let $Y=\mathrm{Tot}_S(L_1)$. Then $X=\mathrm{Tot}_Y(K_Y)$ is the total space of the canonical bundle of $Y$. 
By the assumption, we have isomorphisms of moduli spaces
\begin{align}\label{isom of mod}
P_n(S,\beta) \cong P_n(Y,\beta) \cong P_n(X,\beta). 
\end{align}
Let $\iota : S \hookrightarrow Y$ denote the zero section. A stable pair $I_S^\mdot =\{s : \oO_S \to F\}\in P_n(S,\beta)$ on $S$ induces a stable pair
$$
I_Y^\mdot = \{\oO_Y\to \iota_* \oO_S \stackrel{\iota_*s}{\to} \iota_*F\}
$$ 
on $Y$. Consider the distinguished triangle
\begin{align}\label{triangle start}
I_Y^\mdot \to \oO_Y \to \iota_{\ast}F.
\end{align} 
Applying $\RHom_Y(I_Y^\mdot, \cdot)$ and taking out trace gives a distinguished triangle
\begin{align*} 
\RHom_Y(I_Y^\mdot,\iota_*F) \to \RHom_Y(I_Y^\mdot,I_Y^\mdot)_0[1]\to \RHom_Y(\iota_*F,\oO_Y)[2].
\end{align*}
Applying adjunction and the isomorphism 
\begin{equation} \label{der pull-back}
\dL \iota^*I_Y^\mdot\cong I_S^\mdot \oplus F\otimes L_1^{-1}
\end{equation}
gives a long exact sequence 
$$
\cdots \to \Ext^i_S(I_S^\mdot,F)\oplus \Ext^i_S(F,F\otimes L_1)\to \Ext^{i+1}_Y(I_Y^\mdot,I_Y^\mdot)_0 \to \Ext^{i+2}_Y(\iota_*F,\oO_Y) \to \cdots. 
$$
Note that $\Ext^{1}_Y(\iota_*F,\oO_Y) \cong \Ext^2_Y(\oO_Y,\iota_* F \otimes K_Y)^\vee = 0$. Furthermore, the isomorphism \eqref{isom of mod} induces an isomorphism on Zariski tangent spaces
$$
\Ext^0_S(I_S^\mdot, F)\cong \Ext^1_Y(I_Y^\mdot, I_Y^\mdot)_0. 
$$
Therefore, we deduce $\Hom_S(F,F\otimes L_1)=0$ (similarly $\Hom_S(F,F\otimes L_2)=0$). This vanishing allows us to conclude that the natural (Le Potier) pair obstruction theory 
\begin{equation} \label{pairpot}
(\dR \hH om_{\pi_{P_Y}}(\mathbb{I}_Y^\mdot, \iota_{P_Y *} \mathbb{F}))^{\vee} \to \mathbb{L}_{P_n(Y,\beta)}
\end{equation}
is \emph{perfect}, i.e.~2-term, as we will now show\,\footnote{This was proved for irreducible $\beta$ in \cite[Lem.~3.1]{CMT2}.}. 
Here $\mathbb{I}_Y^\mdot = \{\oO \to \iota_{P_Y *} \mathbb{F} \}$ denotes the universal stable pair on $Y \times P_n(Y,\beta)$, $\iota_{P_Y} : S \times P_n(Y,\beta) \hookrightarrow Y \times P_n(Y,\beta)$ is the base change of the zero section, and $\pi_{P_Y} : Y \times P_n(Y,\beta) \to P_n(Y,\beta)$ denotes the projection. 

From the distinguished triangle
\begin{equation} \label{ex triangle 2}
\RHom_Y(\iota_* F, \iota_*F) \to \RHom_Y(\oO_Y,\iota_* F) \to \RHom_Y(I_Y^{\mdot}, \iota_* F),
\end{equation}
we obtain an exact sequence
$$
0 = H^2(Y,\iota_* F) \to \Ext^2_Y(I_{Y}^\mdot,\iota_* F) \to \Ext^3_Y(\iota_*F,\iota_*F) \to 0 \to \Ext^3_Y(I_Y^\mdot, \iota_* F) \to 0.
$$
Moreover, by adjunction and $\dL \iota^* F \cong F \oplus F \otimes L_1^{-1}[1]$, we have
\begin{align*}
\Ext^3_Y(\iota_*F,\iota_*F)&\cong \Ext^3_S(F,F)\oplus \Ext^2_S(F,F\otimes L_1)\cong \Hom_S(F,F\otimes L_2)^{\vee}=0. 
\end{align*}
Hence $\Ext^2_Y(I_{Y}^\mdot,\iota_* F) \cong \Ext^3_Y(\iota_* F,\iota_* F) =0$. Also note that $\Hom_Y(\iota_*F,\iota_*F) \to \Hom_Y(\oO_Y,\iota_* F)$ is injective. Therefore $\Ext^i_Y(I_Y^{\mdot},\iota_* F) = 0$ unless $i=0,1$ and the complex \eqref{pairpot} is 2-term. We denote the corresponding virtual class by $[P_n(Y,\beta)]^{\mathrm{vir}}_{\mathrm{pair}}$.

We can now use the argument of  \cite[Prop.~4.3]{CMT2} to deduce that the 4-fold virtual class $[P_n(X,\beta)]^{\mathrm{vir}}$ of \eqref{virtual class} equals the pairs virtual class $[P_n(Y,\beta)]^{\mathrm{vir}}_{\mathrm{pair}}$. For completeness, we repeat the argument. Just like pushing forward from $S$ to $Y$ gives \eqref{triangle start} and \eqref{der pull-back}, pushing forward further to $X$ gives
\begin{align*}
\RHom_X(I_X^\mdot,\jmath_* \iota_* F) \to &\RHom_X(I_X^\mdot,I_X^\mdot)_0[1]\to \RHom_X(\jmath_* \iota_*F,\oO_X)[2], \\
&\dL \jmath^*I_X^\mdot\cong I_Y^\mdot \oplus \iota_* F\otimes K_Y^{-1}, 
\end{align*}
where 
$$
I_X^\mdot = \{\oO_X\to \jmath_* \iota_* \oO_S \stackrel{\jmath_* \iota_* s}{\to} \jmath_* \iota_* F\}
$$ 
and we denote the zero section by $\jmath : Y \hookrightarrow X$. Let $T$ be the cone of the composition
$$
\RHom_Y(I_Y^\mdot,\iota_*F) \to \RHom_X(I_X^\mdot, \jmath_* \iota_*F) \to  \RHom_X(I_X^\mdot,I_X^\mdot)_0[1].
$$
Then $T$ fits in the distinguished triangles
\begin{align}  
\begin{split} \label{ex triangles T}
&\RHom_Y(I_Y^\mdot, \iota_* F) \to \RHom_X(I_X^\mdot,I_X^\mdot)_0[1] \to T, \\
&\RHom_Y(\iota_* F, \iota_*F \otimes K_Y) \to T \to \RHom_X(\jmath_* \iota_* F, \oO_X)[2].
\end{split}
\end{align}
Applying Serre duality to the first and third term of the second distinguished triangle, dualizing, and shifting gives the following distinguished triangle
$$
\RHom_Y(\iota_*F, \iota_*F)[2] \to \RHom_X(\oO_X,\jmath_*\iota_*F)[2] \to T^{\vee}.
$$
Comparing to \eqref{ex triangle 2}, we obtain $T \cong \RHom_Y(I_Y^\mdot, \iota_*F)^\vee[-2]$. Hence from \eqref{ex triangles T} we get a short exact sequence
$$
0 \to \Ext^1_Y(I_Y^\mdot, \iota_*F) \to \Ext^2_X(I_X^\mdot, I_X^\mdot)_0 \to \Ext^1_Y(I_Y^\mdot, \iota_*F)^\vee \to 0,
$$
where we crucially used $\Ext^2_Y(I_Y^\mdot,\iota_* F) = 0$ which was shown above. This way, we obtain a half-dimensional subspace $\Ext^1_Y(I_Y^\mdot, \iota_* F)$ of $ \Ext^2_X(I_X^\mdot, I_X^\mdot)_0$. One can show that it is isotropic by the exact same argument as in the proof of \cite[Prop.~3.3, Prop.~2.11]{CMT2}. From this, it is concluded in loc.~cit.~that 
$$
[P_n(X,\beta)]^{\mathrm{vir}} = (-1)^{\beta \cdot L_2 +n}\cdot [P_n(Y,\beta)]^{\mathrm{vir}}_{\mathrm{pair}}.
$$
Here the sign comes from a choice of preferred orientation discussed in a similar setting in \cite{CaoFano}.

Finally, we express the pairs virtual class on $Y$ in terms of the pairs virtual class on $S$. By adjunction, we have 
\begin{align*}
\dR \hH om_{\pi_{P_Y}}(\mathbb{I}_Y^\mdot,\iota_{P_Y *} \mathbb{F})&\cong \dR \hH om_{\pi_{P_S}}(\dL \iota_{P_Y}^* \mathbb{I}_Y^\mdot,\mathbb{F})\\
&\cong \dR \hH om_{\pi_{P_S}}(\mathbb{I}_S^\mdot,\mathbb{F}) \oplus \dR \hH om_{\pi_{P_S}}(\mathbb{F},\mathbb{F} \boxtimes L_1),   
\end{align*}
where $\pi_{P_S} : S \times P_n(S,\beta) \rightarrow P_n(S,\beta) \cong P_n(Y,\beta)$ denotes the projection. From the vanishing $\Hom_S(F,F\otimes L_1) = \Hom_S(F,F\otimes L_2) =0$, for all $[(F,s)] \in P_n(S,\beta)$, we deduce that 
$$
- \dR \hH om_{\pi_{P_S}}(\mathbb{F},\mathbb{F} \boxtimes L_1) \cong \eE xt^1_{\pi_{P_S}} (\mathbb{F},\mathbb{F} \boxtimes L_1)
$$ 
is locally free on $P_n(Y,\beta) \cong P_n(S,\beta)$. Hence the two virtual tangent bundles on $P_n(Y,\beta) \cong P_n(S,\beta)$ differ by a locally free sheaf (in degree $1$). Therefore, by \cite[Thm.~4.6]{Sie}, we have
\begin{equation*}
[P_n(Y,\beta)]^{\mathrm{vir}}_{\mathrm{pair}} = e\big(-\dR\hH om_{\pi_{P_{S}}}(\mathbb{F}, \mathbb{F} \boxtimes L_1)\big) \cdot [P_n(S,\beta)]^{\mathrm{vir}}. \qedhere
\end{equation*}
\end{proof}

\begin{rmk} 
Let $S$ be a smooth projective surface satisfying $b_1(S) = p_g(S) = 0$. Let $L_1, L_2 \in \Pic(S)$ such that $L_1 \otimes L_2 \cong K_S$ and $X = \mathrm{Tot}_S(L_1 \oplus L_2)$. Suppose $P_n(X,\beta)$ is proper. Once the virtual localization formula for stable pair theory on Calabi-Yau 4-folds is established\,\footnote{In the special case $S$ is moreover toric and $P_n(S,\beta) \cong P_n(X,\beta)$ is smooth, a virtual localization formula was proved in \cite[Thm.~A.1]{CK2}. A general virtual localization formula has been announced by Oh-Thomas \cite{OT}.}, we expect that it will induce a virtual class on each of the connected components of $P_n(X,\beta)^{\mathbb{C}^*}$, where $\mathbb{C}^*$ is the 1-dimensional subtorus, preserving the Calabi-Yau volume form, inside the torus $\mathbb{C}^* \times \mathbb{C}^*$ acting on the fibres of $X$. When $P_n(S,\beta) \subseteq P_n(X,\beta)^{\mathbb{C}^*}$ is open and closed, this gives a virtual class, which we expect to be given by 
(for an appropriate choice of orientation) 
\begin{equation} \label{expect}
(-1)^{\beta \cdot L_2 +n} \cdot e\big(-\dR\hH om_{\pi_{P_{S}}}(\mathbb{F}, \mathbb{F} \boxtimes L_1) \otimes \mathfrak{t}_1 \big) \cdot [P_n(S,\beta)]^{\mathrm{vir}} ,  
\end{equation}
where $[P_n(S,\beta)]^{\mathrm{vir}}$ is the virtual class induced from the relative Hilbert scheme (Section \ref{sec:relHilb}), $\mathfrak{t}_1$ is a primitive character corresponding to the first component of the action of $\mathbb{C}^* \times \mathbb{C}^*$, and $e(\cdot)$ denotes equivariant Euler class. 
When $P_n(X,\beta)$ is non-proper, one could define the contribution of $P_n(S,\beta)$ to the stable pair invariants of $X$ by \eqref{expect} (capped with appropriate insertions). 
\end{rmk}

\subsection{Main theorem}

We are now ready to prove the theorem of the introduction. Recall from \eqref{twisted tangent bundle} that we denote by $T_{S^{[m]}}(\lL)$ the twisted (by $\lL$) tangent bundle of $S^{[m]}$.
\begin{thm} \label{mainthm} 
Let $S$ be a smooth projective surface with $b_1(S) = p_g(S) = 0$ and $L_1, L_2 \in \Pic(S)$ such that $L_1 \otimes L_2 \cong K_S$. 
Suppose $\beta \in H_2(S,\mathbb{Z})$ and $n \geqslant 0$ are chosen such that $P_n(X,\beta) \cong P_n(S,\beta)$ for $X = \mathrm{Tot}_S(L_1 \oplus L_2)$. 
Denote by $[\mathrm{pt}] \in H^4(X,\mathbb{Z})$ the pull-back of the Poincar\'e dual of the point class on $S$.
Let $P_n(X,\beta)$ be endowed with the orientation as in \eqref{intro choice of ori}. Then
\begin{align*}
P_{n,\beta}([\mathrm{pt}])&=(-1)^{\beta \cdot L_2+n} \int_{S^{[m]}\times \mathbb{P}^{\chi(\beta)-1}} c_{m}(\oO_S(\beta)^{[m]}(1)) \, \frac{h^n(1+h)^{\chi(L_1(\beta))} (1-h)^{\chi(L_2(\beta))} \, c(T_{S^{[m]}}(L_1))}{c(L_1(\beta)^{[m]} (1))\cdot c((L_2(\beta)^{[m]}(1))^{\vee})},
\end{align*}
when $\beta^2 \geqslant 0$. Here $m := n + g(\beta) - 1$ and $h := c_1(\oO(1))$. Moreover, $P_{n,\beta}([\mathrm{pt}]) = 0$ when $\beta^2<0$.
\end{thm}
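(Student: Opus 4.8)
The plan is to start from the comparison \eqref{intro choice of ori} of Proposition \ref{compare virtual class} and to treat the two cases $\beta^2<0$ and $\beta^2\geqslant 0$ separately. For $\beta^2<0$ I would show that $P_n(S,\beta)$ is empty, so that the invariant vanishes trivially. The key is that the proof of Proposition \ref{compare virtual class} establishes, under the hypothesis $P_n(X,\beta)\cong P_n(S,\beta)$, the vanishings $\Hom_S(F,F\otimes L_1)=\Hom_S(F,F\otimes L_2)=0$ for every $[(F,s)]\in P_n(S,\beta)$. The first gives $\Ext^0_S(F,F\otimes L_1)=0$, and the second gives $\Ext^2_S(F,F\otimes L_1)=0$ by Serre duality on $S$, so that $\dim\Ext^1_S(F,F\otimes L_1)=-\chi(F,F\otimes L_1)$. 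A short Hirzebruch--Riemann--Roch computation (only the $\ch_1(F)=\beta$ term survives) gives $\chi(F,F\otimes L_1)=-\beta^2$, hence $\dim\Ext^1_S(F,F\otimes L_1)=\beta^2$. Since a dimension cannot be negative, $P_n(S,\beta)=\varnothing$ whenever $\beta^2<0$, and $P_{n,\beta}([\mathrm{pt}])=0$.

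For $\beta^2\geqslant 0$ I would unwind the right-hand side of \eqref{intro choice of ori} over the relative Hilbert scheme. Combining the isomorphism \eqref{iso rel hilb} with the product formula \eqref{virt class rel Hilb}, the class $[P_n(S,\beta)]^{\mathrm{vir}}$ becomes $c_m(\oO_S(\beta)^{[m]}(1))$ capped with $S^{[m]}\times[H_\beta]^{\mathrm{vir}}$. Since $b_1(S)=p_g(S)=0$, the Hilbert scheme of divisors is the linear system $H_\beta=|\beta|=\mathbb{P}^{h^0(\oO_S(\beta))-1}$, and pushing $0\to\oO\to\oO(\mathcal{C})\to\oO_{\mathcal{C}}(\mathcal{C})\to 0$ forward along $p\colon S\times|\beta|\to|\beta|$ identifies its obstruction bundle with $H^1(\oO_S(\beta))\otimes\oO(1)$; the same long exact sequence, together with $p_g(S)=0$, yields $H^2(\oO_S(\beta))=0$ (equivalently $K_S-\beta$ is not effective once $\beta\neq 0$ is effective). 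Therefore $[H_\beta]^{\mathrm{vir}}=h^{h^1(\oO_S(\beta))}\cap[\mathbb{P}^{h^0(\oO_S(\beta))-1}]$ is the class of a linear subspace $\mathbb{P}^{\chi(\beta)-1}$, which is precisely the virtual ambient space $S^{[m]}\times\mathbb{P}^{\chi(\beta)-1}$ of the statement, with $h$ its hyperplane class.

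The heart of the proof is to express the remaining integrand in tautological classes. First, a Grothendieck--Riemann--Roch computation of $\tau([\mathrm{pt}])=\pi_{P\ast}(\pi_X^{\ast}[\mathrm{pt}]\cup\ch_3(\mathbb{F}))$, together with its incidence interpretation (the support curve passes through the fixed point), identifies $\tau([\mathrm{pt}])=h$, which accounts for the factor $h^n$. Next, using the description of the universal stable pair on the relative Hilbert scheme \cite{KT1} I write $[\mathbb{F}]=[\oO_{\mathcal{C}}]+[\mathcal{Q}]$ in $K$-theory, the family version of $0\to\oO_C\to F\to Q\to 0$, and expand $\dR\hH om_{\pi_{P_S}}(\mathbb{F},\mathbb{F}\boxtimes L_1)$ into four pieces. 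The curve-curve piece, after applying relative Serre duality and $K_S\cong L_1\otimes L_2$ to rewrite $\chi(L_1(-\beta))=\chi(L_2(\beta))$, produces $(1+h)^{\chi(L_1(\beta))}(1-h)^{\chi(L_2(\beta))}$; the point-point piece reproduces the Carlsson--Okounkov twisted tangent bundle \eqref{twisted tangent bundle}, giving $c(T_{S^{[m]}}(L_1))$; and the two mixed pieces, paired by relative Serre duality so that one carries $L_1$ and the other $L_2=K_S\otimes L_1^{-1}$, produce the tautological bundles $L_1(\beta)^{[m]}(1)$ and $(L_2(\beta)^{[m]}(1))^{\vee}$ in the denominator. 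Since $c(-V)=c(V)^{-1}$, assembling these total Chern classes yields the displayed ratio; the dimension count $\dim[P_n(S,\beta)]^{\mathrm{vir}}=n+\beta^2$ guarantees that integrating the total Chern class ratio against $c_m(\oO_S(\beta)^{[m]}(1))\,h^n$ over $S^{[m]}\times\mathbb{P}^{\chi(\beta)-1}$ automatically extracts the Euler class $e(\eE xt^1_{\pi_{P_S}}(\mathbb{F},\mathbb{F}\boxtimes L_1))=c_{\beta^2}$, so that total Chern classes may be used throughout.

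I expect the main obstacle to be this last step of bookkeeping. In particular one must pin down the precise $K$-theory class of $\mathbb{F}$: the twist hidden in $\mathcal{Q}$ (the quotient $F/\oO_C$, related to $\oO_{\mathcal{Z}}$ by dualizing on $\mathcal{C}$) must conspire to turn the naive untwisted tautological bundles $L_i^{[m]}$ into the $\beta$-twisted bundles $L_i(\beta)^{[m]}(1)$ and to cancel all spurious terms, while each $\dR\pi_*$ of a line-bundle twist must be matched with the correct tautological bundle and power of $h$. The overall sign $(-1)^{\beta\cdot L_2+n}$ is simply carried along from \eqref{intro choice of ori}.
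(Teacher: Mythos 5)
Your proposal is correct and follows essentially the same route as the paper: reduce via Proposition \ref{compare virtual class} to an integral over the ambient space $S^{[m]}\times\mathbb{P}^{\chi(\beta)-1}$, identify $\tau([\pt])=h$, expand $-\dR\hH om_{\pi_{P_S}}(\mathbb{F},\mathbb{F}\boxtimes L_1)$ in $K$-theory into the stated tautological pieces, and trade Euler classes for total Chern classes using the dimension count, while your vanishing argument for $\beta^2<0$ is exactly the paper's observation that $\eE xt^1_{\pi_{P_S}}(\mathbb{F},\mathbb{F}\boxtimes L_1)$ is locally free of rank $\beta^2$. The only difference is organizational: where you decompose $\mathbb{F}=\oO_{\mathcal{C}}+\mathcal{Q}$ and must track the twist hidden in $\mathcal{Q}$, the paper writes $\mathbb{F}=\oO-\mathbb{I}_S^{\mdot}$ and invokes $(\mathbb{I}_S^{\mdot})^{\vee}\cong\mathcal{I}\boxtimes\oO_S(\beta)\boxtimes\oO(1)$ from \cite[Lem.~A.4]{KT1}, which makes the twists by $\oO_S(\beta)\boxtimes\oO(1)$ appear automatically and settles precisely the bookkeeping you flag as the main obstacle.
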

\begin{proof}
Suppose $\beta$ is an effective divisor and $m \geqslant 0$, otherwise $P_n(S,\beta) \cong \Hilb^m(\mathcal{C}/H_\beta) = \varnothing$ and $P_{n,\beta}([\mathrm{pt}])=0$. Consider the closed embedding 
$$
\jmath : \Hilb^m(\mathcal{C}/H_\beta) \hookrightarrow S^{[m]}\times H_\beta,
$$
as in Section \ref{sec:relHilb}. Below, we will show that there exists a class $\psi \in K_0(S^{[m]} \times H_\beta)$ restricting to $e(-\dR\hH om_{\pi_{P_{S}}}(\mathbb{F}, \mathbb{F} \boxtimes L_1)) \cdot \tau([\pt])^n$ on $\Hilb^m(\mathcal{C}/H_\beta)$. 
By Proposition \ref{compare virtual class}, it follows that
\begin{equation} \label{push to ambient}
P_{n,\beta}([\pt]) =\int_{S^{[m]}\times [H_\beta]^{\mathrm{vir}}} c_{m}(\oO(\mathcal{C})^{[m]}) \cdot \psi.
\end{equation}
Since $b_1(S) = p_g(S)=0$, we have \cite{DKO, KT2}
$$
[H_\beta]^{\mathrm{vir}} = |\beta|^{\mathrm{vir}} = h^{h^1(\oO(\beta))} \cap |\beta| \in H_{2\chi(\beta)-2}(|\beta|),
$$
where $h$ denotes the class of the hyperplane on $H_\beta = |\beta|$. Furthermore, we have 
$$
\oO(\mathcal{C})^{[m]}:=\pi_*(\oO(\mathcal{C})|_{\mathcal{Z}}) \cong \oO(\beta)^{[m]}(1),
$$
which follows from the isomorphism $\oO(\mathcal{C}) \cong \oO_S(\beta) \boxtimes \oO(1)$ on $S \times |\beta|$. 
Therefore $P_{n,\beta}([\pt]) = 0$, unless $\chi(\beta) \geqslant 1$, which we assume from now on.

Recall from the proof of Proposition \ref{compare virtual class} that $- \dR \hH om_{\pi_{P_S}}(\mathbb{F},\mathbb{F} \boxtimes L_1) \cong \eE xt^1_{\pi_{P_S}} (\mathbb{F},\mathbb{F} \boxtimes L_1)$ is locally free on $P_n(S,\beta)$ and its rank is $\beta^2$. Therefore $P_{n,\beta}([\pt]) = 0$ unless $\beta^2 \geqslant 0$, which we assume from now on. Next, we extend the complex $-\dR\hH om_{\pi_{P_{S}}}(\mathbb{F}, \mathbb{F} \boxtimes L_1)$ from $P_{n}(S,\beta)$ to $S^{[m]}\times |\beta|$. In $K$-theory, we have 
$\mathbb{I}_S^\mdot=\oO - \mathbb{F}$ and 
\begin{align*}
-\dR\hH om_{\pi_{P_{S}}}(\mathbb{F}, \mathbb{F} \boxtimes L_1)
&= -\chi(L_1) \otimes \oO
+\dR\hH om_{\pi_{P_{S}}}(\oO, \mathbb{I}_S^\mdot \boxtimes L_1) \\
&\quad +\dR\hH om_{\pi_{P_{S}}}(\mathbb{I}_S^\mdot, L_1)-\dR\hH om_{\pi_{P_{S}}}(\mathbb{I}_S^\mdot, \mathbb{I}_S^\mdot \boxtimes L_1) \\
&=-\chi(L_1) \otimes \oO +\dR\hH om_{\pi_{P_{S}}}((\mathbb{I}_S^\mdot)^{\vee}, L_1) \\
&\quad +\dR\hH om_{\pi_{P_{S}}}(\oO,(\mathbb{I}_S^\mdot)^{\vee}\boxtimes L_1)-\dR\hH om_{\pi_{P_{S}}}((\mathbb{I}_S^\mdot)^{\vee}, (\mathbb{I}_S^\mdot)^{\vee} \boxtimes L_1)\big),
\end{align*}
where we suppressed some obvious pull-backs. On $S \times |\beta| \times S^{[m]}$, we have the sheaf  $\mathcal{I} \boxtimes \oO_S(\beta) \boxtimes \oO(1)$, where we use the notation from the introduction. By \cite[Lem.~A.4]{KT1}, we have
\begin{align*}
(\mathbb{I}_S^\mdot)^{\vee} \cong \mathcal{I} \boxtimes \oO_S(\beta) \boxtimes \oO(1) |_{\Hilb^m(\mathcal{C} / |\beta|) \times S}.
\end{align*}
Next we can replace $\mathcal{I}$ by $\oO - \oO_{\mathcal{Z}}$ in $K$-theory. 
Then $-\dR\hH om_{\pi_{P_{S}}}(\mathbb{F}, \mathbb{F} \boxtimes L_1)$ is the restriction of the following element
in the $K$-group of $S^{[m]} \times |\beta|$
\begin{align}
\begin{split} \label{K group class}
&-\chi(L_1)\otimes \oO+ \dR\hH om_{\pi}\big((\oO-\oO_{\mathcal{Z}})\boxtimes (\oO_S(\beta)\otimes L_1^{-1}) \boxtimes \oO(1),\oO\big) \\
&\quad +\dR\hH om_{\pi}\big(\oO,(\oO-\oO_{\mathcal{Z}})\boxtimes (\oO_S(\beta)\otimes L_1) \boxtimes \oO(1)\big)-
\dR\hH om_{\pi}(\mathcal{I}, \mathcal{I} \boxtimes L_1) \\
&=-\chi(L_1)\otimes \oO+\chi(L_1(\beta))\otimes \oO(1)+\chi(L_2(\beta))\otimes \oO(-1) \\
&\quad -(L_1(\beta))^{[m]}\boxtimes \oO(1)-\big((L_2(\beta))^{[m]}\big)^{\vee}\boxtimes \oO(-1) -
\dR\hH om_{\pi}(\mathcal{I}, \mathcal{I} \boxtimes L_1),
\end{split}
\end{align}
where $\pi : S \times  S^{[m]} \times |\beta| \to S^{[m]} \times |\beta|$ denotes the projection and we used Serre duality, $L_1 \otimes L_2 \cong K_S$, and $L_i(\beta):=\oO_S(\beta)\otimes L_i$. 

Finally, we consider primary insertions
\begin{align*}
\tau \colon H^{4}(X,\mathbb{Z})\to H^{2}(P_n(X,\beta),\mathbb{Z}), \quad
\tau(\gamma)=\pi_{P\ast}(\pi_X^{\ast}\gamma \cup\ch_3(\iota_*\mathbb{F}) ),
\end{align*}
where $\pi_X$, $\pi_P$ are projections from $X \times P_n(X,\beta)$ to corresponding factors, 
and $\iota: S\times P_n(S,\beta)\hookrightarrow X\times P_n(S,\beta) \cong X \times P_n(X,\beta)$ is the base change of the inclusion of the zero section. Note that $\ch_3(\iota_*\mathbb{F})$ is Poincar\'e dual to the fundamental class of the scheme theoretic support of $\iota_*\mathbb{F}$, which we denote by $[\iota_* \mathbb{F}]$. The fundamental class of the scheme theoretic support of $\mathbb{F}$, which we denote by $[\mathbb{F}]$, equals $\jmath^* [\mathcal{C}]$ where $\mathcal{C} \subseteq S \times S^{[m]} \times |\beta|$ is the pullback of the universal curve over $|\beta|$.
Consider the commutative diagram 
\begin{align*}\xymatrix{ X  \ar[r]^{p}  & S \ar@/_1pc/[l]_{\iota}  & \\ 
X \times P_n(X,\beta)  \ar[u]_{\pi_X}\ar[d]_{\pi_P} \ar[r]^{p}  & S \times P_n(S,\beta) \ar@/_1pc/[l]_{\iota} \ar[u]_{\pi_S} \ar[d]_{\pi_P} 
\ar[r]^{\jmath \,\,\, } & S \times S^{[m]} \times |\beta| \ar[d]_{\pi} \ar[ul]_{\pi_S} \\
P_n(X,\beta) \ar[r]^{\cong} & P_n(S,\beta) \ar[r]^{\jmath\,\, } & S^{[m]}\times |\beta|  }
\end{align*}
For $\gamma := p^*[\mathrm{pt}] \in H^{4}(X,\mathbb{Z})$, where $[\mathrm{pt}]$ denotes the Poincar\'e dual of the point class on $S$, we work our way through the diagram
\begin{align}
\begin{split} \label{tau}
\tau(\gamma)&=\pi_{P\ast}(\pi_X^{\ast}\gamma \cup[\iota_*\mathbb{F}]) \\
&= \pi_{P\ast}\,p_{\ast}(p^*\pi_S^{\ast} [\pt] \cup \iota_* [\mathbb{F}])\\
&= \pi_{P\ast}(\pi_S^{\ast} [\pt] \cup [\mathbb{F}]) \\
&= \jmath^*\big(\pi_{\ast}(\pi_S^{\ast} [\pt] \cup [\mathcal{C}])\big).
\end{split}
\end{align}
Using, once more, that on $S \times |\beta|$ we have $\oO(\mathcal{C}) \cong \oO_S(\beta) \boxtimes \oO(1)$, we conclude
$$
\pi_{\ast}(\pi_S^{\ast}[\pt] \cup [\mathcal{C}])= h \int_{S} \pt = h.
$$
Therefore $\tau([\pt])^n$ is simply the restriction of the class $h^n$ on $S^{[m]} \times |\beta|$.

Since $\rk(- \dR \hH om_{\pi_{P_S}}(\mathbb{F},\mathbb{F} \boxtimes L_1)) = \beta^2 = 2m + \chi(\beta) - 1 - m - n$, we can replace Euler class by total Chern class. The result now follows from \eqref{push to ambient}, \eqref{K group class}, and \eqref{tau}.
\end{proof}
\begin{rmk}\label{rmk on b1>0}
For surfaces with $p_g(S) = 0$ and $b_1(S) > 0$, we can still use the formula for the virtual class from Proposition \ref{compare virtual class}. Suppose $h^2(L) = 0$ for all $L \in \Pic_\beta(S)$. Then the virtual class $[H_\beta]^{\mathrm{vir}}$ can be calculated by fixing a sufficiently ample effective divisor $A$ on $S$ and considering the embedding $H_\beta \hookrightarrow H_{[A] + \beta}$ as in \cite{DKO} (see also \cite[Prop.~A.2]{KT1}, \cite{KT2}). Therefore the invariant can be expressed as an integral over $S^{[m]} \times H_{[A] + \beta}$, where $H_{[A]+\beta}$ is a projective bundle over $\Pic_{[A]+\beta}(S)$ via the Abel-Jacobi map. Pushing forward along the Abel-Jacobi map, the invariant can be expressed as an integral over $S^{[m]} \times \Pic_{A + \beta}(S)$.
\end{rmk}

\subsection{Atiyah-Bott localization} \label{sec:torusloc}

In Corollary \ref{stable pair on del pezzo}, Remark \ref{CKK for nef cases}, we gave examples of $(S,L_1,L_2)$ for which the assumptions of Theorem \ref{mainthm} are satisfied. In all of these cases, $S$ is a toric surface. As a consequence, $X = \mathrm{Tot}_S(L_1 \oplus L_2)$ is also toric, so in principle one could calculate the invariant $P_{n,\beta}([\mathrm{pt}])$ using the vertex formalism for stable pair invariants on toric Calabi-Yau 4-folds developed in \cite{CK2, CKM}\,\footnote{In loc.~cit.~it is assumed that the fixed locus $P_n(X,\beta)^{(\mathbb{C}^*)^4}$ is at most 0-dimensional. This is the case for all local Calabi-Yau 4-fold surfaces.}. However, the number of $(\mathbb{C}^*)^4$-fixed points is typically very large. For instance, for $(S,L_1,L_2) = (\mathbb{P}^1 \times \mathbb{P}^1, \oO(-1,-1), \oO(-1,-1))$, $(d_1,d_2) = (2,4)$, and $n=0$, we have 182 fixed points, whereas Theorem \ref{mainthm} only involves an integral over $S^{[2]} \times \mathbb{P}^{14}$.  

The calculation of intersection numbers on Hilbert schemes of points on toric surfaces is a classical subject (see e.g.~\cite{ES}). 
Let $S$ be a smooth projective toric surface with torus $T = (\mathbb{C}^*)^2$. The action of $T$ on $S$ lifts to an action of $T$ on $S^{[m]}$ for any $m$. Let $P$ be a polynomial expression in Chern classes of
\begin{equation} \label{taut complexes}
\dR \Gamma(\lL) \otimes \oO  - \dR \hH om_{\pi}(\iI,\iI \boxtimes \mathcal{L}), \quad \lL^{[m]},
\end{equation}
for various choices of $T$-equivariant line bundles $\lL$ on $S$ and where $\pi : S \times S^{[m]} \to S^{[m]}$ denotes the projection. Note that this includes Chern classes of the tangent bundle, which can be expressed as $-\dR \hH om_{\pi}(\iI,\iI)_0$ (since $\Ext^{i}(I_Z,I_Z)_0 = 0$ for $Z \in S^{[m]}$ and $i \neq 1$). Suppose also that the degree of $P$, as a class in the Chow ring $A^*(S^{[m]})$, equals $\dim S^{[m]} = 2m$.
By the Atiyah-Bott localization formula \cite{AB}, we have
$$
\int_{S^{[m]}} P = \int_{(S^{[m]})^T} \frac{P |_{(S^{[m]})^T}}{e(N_{(S^{[m]})^T / S^{[m]}})},
$$
where $e(\cdot)$ denotes the $T$-equivariant Euler class and $N_{(S^{[m]})^T / S^{[m]}}$ is the normal bundle of the fixed point locus $(S^{[m]})^T  \subseteq S^{[m]}$. Furthermore, in this formula one has to choose a $T$-equivariant lift of $P$. More precisely, one can choose a $T$-equivariant structure on all (complexes of) sheaves appearing in $P$ and replace all Chern classes appearing in $P$ by $T$-equivariant Chern classes.

The fixed point locus consists of isolated reduced points, which can be described combinatorially. 
Consider a cover by maximal $T$-invariant affine open subsets:
$$
\big\{U_{\sigma} \cong \Spec \mathbb{C}[x_{\sigma}, y_{\sigma} ]\big\}_{\sigma=1}^{e(S)}.
$$
Then the fixed locus $(S^{[m]})^T$ precisely consists of the closed subschemes of $S$ defined by collections of monomial ideals
$$
\big\{ I_\sigma \subseteq \mathbb{C}[x_{\sigma}, y_{\sigma}] \big\}_{\sigma=1}^{e(S)}
$$ 
of total colength $m$. The monomial ideals of finite colength in $\mathbb{C}[x, y]$ are in bijective correspondence with partitions. Explicitly, $\lambda = (\lambda_1 \geqslant  \cdots \geqslant  \lambda_\ell)$ corresponds to the ideal
$$
\big(y^{\lambda_1}, x y^{\lambda_2}, \ldots, x^{\ell-1}y^{\lambda_\ell}, x^{\ell}\big),
$$
where $\ell(\lambda) = \ell$ is the length of $\lambda$. Hence we can index the points of the fixed locus $(S^{[m]})^T$ by collections of partitions 
$$
{\boldsymbol{\lambda}} = \big\{ \lambda^{(\sigma)} \big\}_{\sigma=1}^{e(S)}
$$
of total size 
$$
\sum_{\sigma=1}^{e(S)} |\lambda^{(\sigma)}| = \sum_{\sigma=1}^{e(S)} \sum_{i=1}^{\ell(\lambda^{(\sigma)})} \lambda_i^{(\sigma)} = m.
$$
Denote the closed subscheme corresponding to ${\boldsymbol{\lambda}}$ by $Z_{{\boldsymbol{\lambda}}}$. 

In order to calculate integrals such as the one in Theorem \ref{mainthm} by Atiyah-Bott localization, we need to consider Chern classes of
\begin{align}
\begin{split} \label{restrictions}
\mathcal{L}^{[m]}|_{Z_{{\boldsymbol{\lambda}}}} &= H^0(\mathcal{L}|_{Z_{{\boldsymbol{\lambda}}}}) \in K_0^{T}(\mathrm{pt}) = \mathbb{Z}[t_1^{\pm1}, t_2^{\pm1}], \\
\Big(\dR \Gamma(\mathcal{L}) \otimes \oO -\dR\hH om_{\pi}(\iI,\iI \boxtimes \mathcal{L}) \Big) \Big|_{Z_{{\boldsymbol{\lambda}}}} &= \dR\Gamma(\mathcal{L}|_{Z_{{\boldsymbol{\lambda}}}}) -\RHom_S(I_{Z_{{\boldsymbol{\lambda}}}}, I_{Z_{{\boldsymbol{\lambda}}}} \otimes \mathcal{L}) \in K_0^{T}(\mathrm{pt}),
\end{split}
\end{align}
where $t_1, t_2$ are the equivariant parameters of $T$. 

Suppose $Z_{{\boldsymbol{\lambda}}}$ is a 0-dimensional $T$-equivariant subscheme supported entirely on a maximal $T$-invariant affine open subset $U_\sigma$ and set $\lambda := \lambda^{(\sigma)}$. Suppose we choose coordinates such that $U_\sigma = \Spec \mathbb{C}[x,y]$ and the torus action (on coordinate functions) is given by $(t_1,t_2) \cdot (x,y) = (t_1x,t_2y)$. Denote the character corresponding to $\mathcal{L}|_{U_\sigma}$ by $\chi(t_1,t_2)$. Then
\begin{align}
\begin{split} \label{Kclass 1}
H^0(\mathcal{L}|_{Z_{{\boldsymbol{\lambda}}}}) &= \chi(t_1,t_2) \cdot Z_\lambda, \\
\mathrm{where} \quad Z_\lambda&:= \sum_{i = 1}^{\ell(\lambda)} \sum_{j=1}^{\lambda_i} t_1^{i-1} t_2^{j-1}.
\end{split}
\end{align}
Now suppose $W_{{\boldsymbol{\mu}}}$ is a second 0-dimensional $T$-equivariant subscheme supported entirely on $U_\sigma$ and write $\mu := \mu^{(\sigma)}$.
The following formula can be deduced from a well-known calculation using \v{C}ech cohomology (e.g.~see \cite[Prop.~4.1]{GK}): 
\begin{align}
\begin{split} \label{Kclass 2}
\RHom_S(\oO_{W_{\boldsymbol{\mu}}}, \oO_{Z_{\boldsymbol{\lambda}}} \otimes \mathcal{L}) &= \chi(t_1,t_2) \, W_\mu^* Z_\lambda \frac{(1-t_1) (1- t_2)}{t_1t_2} \in K_0^T(\mathrm{pt}), \\
\mathrm{where} \quad W_\mu &:= \sum_{i = 1}^{\ell(\mu)} \sum_{j=1}^{\mu_i} t_1^{i-1} t_2^{j-1}.
\end{split}
\end{align}
Here $(\cdot)^*$ is the involution defined by dualizing. 

For arbitrary $Z_{{\boldsymbol{\lambda}}}$, the $K$-group classes of \eqref{restrictions} can be determined from \eqref{Kclass 1} and \eqref{Kclass 2} by using the following equalities in $K$-theory
\begin{align*}
\oO_{Z_{{\boldsymbol{\lambda}}}} &= \sum_{\sigma=1}^{e(S)} \oO_{Z_{\lambda^{(\sigma)}}}, \\ 
I_{\oO_{Z_{{\boldsymbol{\lambda}}}}} &= \oO_S - \oO_{Z_{{\boldsymbol{\lambda}}}},
\end{align*}
where $Z_{\lambda^{(\sigma)}}$ denotes the 0-dimensional closed subscheme supported on $U_\sigma$ determined by $\lambda^{(\sigma)}$.

Consider Theorem \ref{mainthm} for the examples of $(S,L_1,L_2)$ listed in Proposition \ref{stable pair on del pezzo} and Remark \ref{CKK for nef cases}. In each case, we calculated the invariant $P_{n,\beta}([\pt])$ by first integrating out the linear system $\mathbb{P}^{\chi(\beta)-1}$. This amounts to expanding the integrand in powers of $h = c_1(\oO(1))$ and taking the coefficient of $h^{\chi(\beta)-1}$. This gives a polynomial expression $P$ in Chern classes of complexes of the form \eqref{taut complexes}. The integral $\int_{S^{[m]}} P$ is then calculated by Atiyah-Bott localization as described. The resulting stable pair invariants are tabulated in Appendix \ref{tables}. 

With the numbers of Appendix \ref{tables}, we are able to do various new checks of the Cao-Maulik-Toda conjectures (Conjectures \ref{conj:GW/GV g=0} and \ref{conj:GW/GV g=1}). Combining our tables in Appendix \ref{tables} with the tables for $n_{0,\beta}([\pt]), n_{1,\beta}$ in \cite[Sect.~3]{KP} gives Corollary \ref{main cor 1} of the introduction.

Bousseau-Brini-van Garrel \cite{BBG} determined all the genus zero Gromov-Witten (and therefore Gopakumar-Vafa type) invariants of $X = \mathrm{Tot}_S(L_1 \oplus L_2)$ with $(S,L_1,L_2)$ as in Remark \ref{CKK for nef cases} (as well as for  other cases). Note that the method of Bousseau-Brini-van Garrel does not produce genus one Gopakumar-Vafa type invariants, so we can only do verifications of Conjecture \ref{conj:GW/GV g=0} in these cases. Combining the tables in Appendix \ref{tables} with the values for genus zero Gopakumar-Vafa type invariants provided to us by Bousseau-Brini-van Garrel gives Corollary \ref{main cor 2} of the introduction.

Recall that for $(S,L_1,L_2)$ with $L_1 \otimes L_2 \cong K_S$, $L_1^{-1}$, $L_2^{-1}$ non-trivial and nef, and $S$ minimal and toric, we classified all values of $n \geqslant 0$ for which $P_n(S,\beta) \cong P_n(X,\beta)$, and $P_n(S,\beta) \neq \varnothing$ (Remark \ref{CKK for nef cases}). In these cases we therefore calculated \emph{all} stable pair invariants $P_{n,\beta}([\pt])$.

\appendix

\section{Tables} \label{tables}

\subsection{Local surfaces}\label{local surfa list}

In this section, we list the stable pair invariants $P_{n,\beta}([\mathrm{pt}])$ of $X = \mathrm{Tot}_S(L_1 \oplus L_2)$ for the cases mentioned in Proposition \ref{stable pair on del pezzo} and Remark \ref{CKK for nef cases}, and for a few additional cases. We use the following conventions and notation:
\begin{itemize}
\item $P_{0,0}:=1$ and $P_{n,0}([\pt]) = 0$ for all $n > 0$.
\item Entries decorated with $\star$ were defined by a virtual  localization formula on the fixed locus and have been calculated using the vertex formalism as discussed in \cite{CK2, CKM}. In these cases $P_n(X,\beta) \setminus P_n(S,\beta) \neq \varnothing$. See Remark \ref{virtloc} for a comparison to the globally defined invariants. All other entries have been computed using Theorem \ref{mainthm},
\item Zeroes decorated with $\dagger$ have non-empty underlying moduli space $P_n(X,\beta)$. In this sense, they are ``non-trivial'' zeroes.
\end{itemize}
\noindent For $(S,L_1,L_2) = (\mathbb{P}^2,\oO(-1),\oO(-2))$, we calculated the following values for $P_{0,d} :=P_{0,d[H]}$ and $P_{n,d}([\pt]):=P_{n,d[H]}([\pt])$ (for $n >0$). 
\begin{center}
\begin{table}[H]
\begin{tabular} {|c||c|c|c|c|c|}
\hline
$d \setminus n$ &0 &1&2&3&4\\
\hline\hline
1&  0& $-1$ & $0^{\dagger}$ & $0^{\dagger}$ & $0^{\dagger}$\\
\hline
2& 0 & 1 & 1& $0^{\star, \dagger}$ & \\
\hline
3&  $-1$ & $-1$ & $-2$ & & \\
\hline
4&  2&3  & & & \\
\hline
\end{tabular}
\hfill $$ $$
$$
P_{n,1}([\pt])=0^{\dagger}, \,\,\,  \forall \,\, n  \geqslant 2.
$$
\end{table}
\end{center}

\vspace{-0.5cm}

\noindent For $(S,L_1,L_2) = (\mathbb{P}^1 \times \mathbb{P}^1,\oO(-1,-1),\oO(-1,-1))$ and $P_{0,(d_1,d_2)}:=P_{0,d_1[H_1] + d_2[H_2]}$ and $P_{n,(d_1,d_2)}([\pt]):=P_{n,d_1[H_1] + d_2[H_2]}([\pt])$ (for $n >0$), where $H_1$, $H_2$ are defined in Example \ref{P1xP1 cpt/noncpt}, we have  
\begin{center}
\begin{table}[H]
\begin{tabular} {|c||c|c|c|c|c|}
\hline
$P_{0,(d_1,d_2)}$ &0 &1&2&3&4\\
\hline\hline
0&1&0&0&0&0\\
\hline
1&0&0&0&0&0\\
\hline
2&0&0&1&2&5\\
\hline
3&0&0&2&10& \\
\hline
4&0&0&5& & \\
\hline
\end{tabular}
\quad
\begin{tabular} {|c||c|c|c|c|c|}
\hline
$P_{1,(d_1,d_2)}([\pt])$ &0 &1&2&3&4\\
\hline\hline
0&0&1&0&0&0\\
\hline
1&1&1&1&1&1\\
\hline
2&0&1&2&5& \\
\hline
3&0&1&5& & \\
\hline
4&0&1& & & \\
\hline
\end{tabular}
\hfill $$ $$
\begin{tabular} {|c||c|c|c|c|c|}
\hline
$P_{2,(d_1,d_2)}([\pt])$ &0 &1&2&3&4\\
\hline\hline
0&0&$0^{\dagger}$&1&0&0\\
\hline
1&$0^{\dagger}$&2&2&2&2\\
\hline
2&1&2&5& & \\
\hline
3&0&2& & & \\
\hline
4&0&2& & & \\
\hline
\end{tabular}
\quad
\begin{tabular} {|c||c|c|c|c|c|}
\hline
$P_{3,(d_1,d_2)}([\pt])$ &0 &1&2&3&4\\
\hline\hline
0&0&$0^{\dagger}$&$0^{\star,\dagger}$&1&0\\
\hline
1&$0^{\dagger}$&$0^{\dagger}$&$3^{\star}$& & \\
\hline
2&$0^{\star,\dagger}$&$3^{\star}$& & & \\
\hline
3&1& & & & \\
\hline
4&0& & & & \\
\hline
\end{tabular}
\end{table}
\end{center}
\begin{align*}
P_{0,(1,d)} &= P_{0,(d,1)}=0, \,\,\,  \forall \,\,d \geqslant 0 \\
P_{1,(1,d)}([\pt]) &= P_{1,(d,1)}([\pt])=1, \,\,\,  \forall \,\,d \geqslant 0 \\
P_{2,(1,d)}([\pt]) &= P_{2,(d,1)}([\pt])=2, \,\,\,  \forall \,\, d \geqslant 1 \\
P_{n,(0,d)}([\pt]) &= P_{n,(d,0)}([\pt]) =\delta_{n, d}, \,\,\,  \forall \,\, 0 \leqslant n \leqslant d \\
P_{n,(1,1)}([\pt])&=0^{\dagger}, \,\,\,  \forall \,\,n \geqslant 3,\\
P_{n,(0,1)}([\pt])&=P_{n,(1,0)}([\pt])=0^{\dagger}, \,\,\,  \forall \,\,n \geqslant 2.
\end{align*}

\noindent For $(S,L_1,L_2) = (\mathbb{P}^1 \times \mathbb{P}^1,\oO(-1,0),\oO(-1,-2))$, we have:
\begin{align*}
&P_{0, (2,2)}=1, \,\,\,  P_{0, (2,3)}=2, \,\,\,  P_{0, (2,4)}=5, \,\,\, P_{0, (3,2)}=2, \\
&P_{1, (2,2)}([\pt])=2, \,\,\, P_{1, (2,3)}([\pt])=5, \\
&P_{1, (d,1)}([\pt]) = P_{1, (1,d)}([\pt]) = 1, \,\,\, \forall \,\, d \geqslant 1 \\
&P_{2, (1,d)}([\pt]) = 2, \,\,\, \forall \,\, d \geqslant 2, \\
&P_{n,(0,1)}([\pt])=0^{\dagger}, \,\,\,  \forall \,\,n \geqslant 2\\
&P_{n,(0,n)}([\pt]) = 1, \,\,\, \forall \,\,n \geqslant 1.
\end{align*}

\noindent Recall the notation for Hirzebruch surfaces from the introduction. Consider $(S,L_1,L_2) = (\mathbb{F}_1, \oO(-1,-1 ), \oO(-1,-2))$. We write $P_{0,(d_1,d_2)}:=P_{0,d_1[B] + d_2 [F]}$ and, for $n > 0$,  $P_{n,(d_1,d_2)}([\pt]):=P_{n,d_1[B] + d_2 [F]}([\pt])$. In the tables below, the rows are for $d_1$ and the columns for $d_2$.
{\footnotesize{
\begin{center}
\begin{table}[H]
\begin{tabular}{|c||c|c|c|c|c|c|c|c|}
\hline
$P_{0,(d_1,d_2)}$ &0 &1&2&3&4&5&6&7\\
\hline\hline
0& 1 &0&0&0&0&0&0&0\\
\hline
1&0&0&0&0&0&0&0&0\\
\hline
2&0&0&0& $1$ &2&5& & \\
\hline
3&0&0&0& $-1$ & & & & \\
\hline
\end{tabular}
\quad
\begin{tabular}{|c||c|c|c|c|c|c|c|c|}
\hline
$P_{1,(d_1,d_2)}([\pt])$ &0 &1&2&3&4&5&6&7\\
\hline\hline
0& 0 & $1$ &0&0&0&0&0&0\\
\hline
1& & $-1$ & $-1$ & $-1$ & $-1$ & $-1$ & $-1$ & $-1$\\
\hline
2&0&0&1&2&5 & & & \\
\hline
3&0&0&0& & & & & \\
\hline
\end{tabular}
\end{table}
\end{center}
}}

\vspace{-0.8cm}

\begin{align*}
&P_{1,(1,d)}([\pt])=-1, \quad \forall \,\, d \geqslant 1 \\
&P_{2,(1,d)}([\pt])=- 2, \quad \forall \,\,d \geqslant 2 \\
&P_{3,(0,2)}([\pt])=0^{\star, \dagger}, \\
&P_{n,(0,1)}([\pt])=0^{\dagger}, \,\,\,  \forall \,\,n \geqslant 2\\
&P_{n,(0,n)}([\pt]) = 1, \,\,\, \forall \,\,n \geqslant 1.
\end{align*}

\begin{rmk}
Denoting the exceptional curve of $\mathbb{F}_1$ by $B$, we have $N_{B/X} = \oO(-1)\oplus\oO \oplus \oO(-1)$, which has sections in the direction of $L_1$. Therefore $P_1(X,[B])$ is non-proper, which explains the gap in the table for $P_{1,(1,0)}([\pt])$.  
\end{rmk}
	
\noindent For $(S,L_1,L_2) = (\mathbb{F}_1, \oO(0,-1), \oO(-2,-2))$, we have
\begin{center}
\begin{table}[H]
\begin{tabular}{|c||c|c|c|c|c|}
\hline
$P_{0,(d_1,d_2)}$ &0 &1&2&3&4\\
\hline\hline
0&$1$ &0 &0&0&0\\
\hline
1&0&0&0&0&0\\
\hline
2&0&0&0&1&2\\
\hline
3&0&0&0&$-1$& \\
\hline
\end{tabular}
\quad
\begin{tabular}{|c||c|c|c|c|c|}
\hline
$P_{1,(d_1,d_2)}([\pt])$ &0 &1&2&3&4\\
\hline\hline
0&$0 $ &  &0&0&0\\
\hline
1 & &$-1$&$-1$&$-1$&$-1$\\
\hline
2&0&0&$1$&2& \\
\hline
3&0&0&0& & \\
\hline
\end{tabular}
\hfill $$ $$
\begin{align*}
P_{1,(1,d)}([\pt])&=-1, \quad \forall \,\, d \geqslant 1 
\end{align*}
\end{table}
\end{center}

\vspace{-0.8cm}
	
\noindent For $(S,L_1,L_2) = (\mathbb{F}_2,\oO(-1,-2), \oO(-1,-2))$, we have
\begin{center}
\begin{table}[H]
\begin{tabular}{|c||c|c|c|c|c|c|c|c|}
\hline
$P_{0,(d_1,d_2)}$ &0 &1&2&3&4&5&6&7\\
\hline\hline
0&$1 $&0&0&0&0&0&0&0\\
\hline
1&0&0&0&0&0&0&0&0\\
\hline
2&0&0&0&0&1&2&5& \\
\hline
\end{tabular}
\quad
\begin{tabular}{|c||c|c|c|c|c|c|c|c|}
\hline
$P_{1,(d_1,d_2)}([\pt])$ &0 &1&2&3&4&5&6&7\\
\hline\hline
0& 0 &1&0&0&0&0&0&0\\
\hline
1& &1& $1$ &1&1&1&1&1\\
\hline
2&0&0&0&1&2&5& & \\
\hline
\end{tabular}
\end{table}
\end{center}

\begin{align*}
& P_{1,(1,d)}([\pt])=1, \quad \forall \,\, d \geqslant 1 \\
&P_{2,(1,d)}([\pt])=2, \quad \forall \,\,d \geqslant 2 \\
&P_{n,(0,1)}([\pt])=0^{\dagger}, \,\,\,  \forall \,\,n \geqslant 2\\
&P_{n,(0,n)}([\pt]) = 1,  \,\,\, \forall \,\,n \geqslant 1.
\end{align*}

 \subsection{Local $\mathbb{P}^3$}
 
Consider $X = \mathrm{Tot}_{\mathbb{P}^3}(K_{\mathbb{P}^3})$. Let $P_{0,d} := P_{0,d[\ell]}$ and $P_{n,d}([\ell]) := P_{n,d[\ell]}([\ell])$ (for $n>0$), where $[\ell] \in H_2(\mathbb{P}^3,\mathbb{Z}) \cong H_2(X,\mathbb{Z})$ denotes the class of a line $\ell \subseteq \mathbb{P}^3$ and we also write
$[\ell] \in H^4(X,\mathbb{Z})$ for the pull-back of its Poincar\'e dual from $\mathbb{P}^3$ to $X$. 
Obviously, $X = \mathrm{Tot}_{\mathbb{P}^3}(K_{\mathbb{P}^3})$ is not a local surface so Theorem \ref{mainthm} does not apply. All stable pair invariants in this section have been calculated using the vertex formalism of \cite{CK2, CKM} (this is stressed by decorating the invariants with $\star$). We determined the following values of $P_{0,d}$ and $P_{n,d}([\ell])$.
 \begin{center}
\begin{table}[H]
\begin{tabular} {|c||c|c|c|c|c|}
\hline
$d \setminus n$ &0 &1&2&3&4\\
\hline\hline
$1$ & $0^{\star}$ & $-20^{\star}$ & $0^{\star, \dagger}$ & $0^{\star, \dagger}$ & $0^{\star, \dagger}$ \\
\hline
$2$ & $0^{\star}$ & $-820^{\star}$ & $400^{\star}$ &&\\
\hline
$3$ & $11200^{\star}$ & $-68060^{\star}$ & &&\\
\hline
\end{tabular}
\hfill $$ $$
\begin{align*}
P_{n,1}([\ell])=0, \,\,\, \forall \,\, n \geqslant 2.
\end{align*}
\end{table}
\end{center}

\vspace{-0.8cm}

\begin{rmk} \label{virtloc}
For $X =\mathrm{Tot}_{\mathbb{P}^3}(K_{\mathbb{P}^3})$ and all the cases in this table, we have $P_n(\mathbb{P}^3,\beta) \cong P_n(X,\beta)$. This can be deduced from a filtration argument similar to Proposition \ref{stable pair on del pezzo} combined with the fact that all degree 2 Cohen-Macaulay curves $C$ on $\mathbb{P}^3$ satisfy $\chi(\oO_C) \geqslant 1$ (see also \cite[p.~20]{CT2}).
Therefore, the reasoning of Proposition \ref{compare virtual class} yields 
\begin{equation} \label{eqnapp}
[P_n(X,\beta)]^{\mathrm{vir}}=(-1)^{\beta \cdot c_1(\mathbb{P}^3) +n}\cdot [P_n(\mathbb{P}^3,\beta)]_{\mathrm{pair}}^{\mathrm{vir}},  
\end{equation}
where $[P_n(\mathbb{P}^3,\beta)]_{\mathrm{pair}}^{\mathrm{vir}}$ is the virtual class of the pair perfect obstruction theory \eqref{pairpot} on $\mathbb{P}^3$ (see also \cite[Lem.~3.1]{CMT2} in a similar setting). The sign in this formula is 
a preferred choice of orientation on $P_n(X,\beta)$ as for \eqref{intro choice of ori}. 
Now we are in the world of ``ordinary'' perfect obstruction theories and the torus action on $\mathbb{P}^3$ can be used to apply the Graber-Pandharipande virtual localization formula \cite{GP} to the right hand side of \eqref{eqnapp}. Similar to the argument for \cite[Thm.~A.1]{CK2}, it then follows that the invariants in this table (defined by localization on the fixed locus \cite{CK2, CKM}) are equal to the global invariants \eqref{glob inv}. This reasoning also works for the local surface case $(S,L_1,L_2) = (\mathbb{P}^2,\oO(-1),\oO(-2))$, $d=2$, $n=3$, because then all stable pairs are scheme theoretically supported in the threefold $Y = \mathrm{Tot}_S(L_1)$.
\end{rmk}


\begin{thebibliography}{MNOP}
\bibitem[AB]{AB} M.~F.~Atiyah and R.~Bott, \textit{The moment map and equivariant cohomology}, Topology 23 no.~1 (1984) 1--28.
\bibitem[BS]{BS} M.~Beltrametti and A.~J.~Sommese, \textit{On k-spannedness for projective surfaces}, in 1988 L'Aquila Proceedings: Hyperplane sections, Lecture Notes in Math., Vol.~1417 24--51, Springer-Verlag (1990). 
\bibitem[BJ]{BJ} D.~Borisov and D.~Joyce, \textit{Virtual fundamental classes for moduli spaces of sheaves on Calabi-Yau four-folds}, Geom.~Topol.~21 (2017) 3231--3311.
\bibitem[BBG]{BBG} P.~Bousseau, A.~Brini, and M.~van Garrel, \textit{Stable maps to Looijenga pairs}, arXiv:2011.08830. 
\bibitem[Bus]{Bus} V.~Bussi, \textit{Derived symplectic structures in generalized Donaldson-Thomas theory and categorification}, PhD thesis (2014), University of Oxford.
\bibitem[Cao]{CaoFano} Y.~Cao, \emph{Genus zero {G}opakumar-{V}afa type invariants for {C}alabi-{Y}au 4-folds {II}: {F}ano 3-folds}, Commun. Contemp. Math. 22 (2020) no. 7 1950060.
\bibitem[CGJ]{CGJ} Y.~Cao, J.~Gross and D.~Joyce, \textit{Orientability of moduli spaces of Spin(7)-instantons and coherent sheaves on Calabi-Yau 4-folds},  Adv. Math. 368 (2020) 107134.
\bibitem[CK1]{CK1} Y.~Cao and M.~Kool, \textit{Zero-dimensional Donaldson-Thomas invariants of Calabi-Yau 4-folds}, Adv.~Math.~338 (2018) 601--648.
\bibitem[CK2]{CK2} Y.~Cao and M.~Kool,  \textit{Curve counting and DT/PT correspondence for Calabi-Yau 4-folds}, Adv. Math. 375 (2020) 107371.
\bibitem[CKM]{CKM} Y.~Cao, M.~Kool, and S.~Monavari, \textit{K-theoretic DT/PT correspondence for toric Calabi-Yau 4-folds}, arXiv:1906.07856.
\bibitem[CL1]{CL} Y.~Cao and N.~C. Leung, \emph{Donaldson-Thomas theory for Calabi-Yau 4-folds}, arXiv:1407.7659.
\bibitem[CL2]{CL2} Y.~Cao and N.~C. Leung,  \textit{Orientability for gauge theories on Calabi-Yau manifolds}, Adv.~Math.~314 (2017) 48--70.
\bibitem[CMT1]{CMT1} Y.~Cao, D.~Maulik, and Y.~Toda, \textit{Genus zero Gopakumar-Vafa type invariants for Calabi-Yau 4-folds}, Adv.~Math.~338 (2018) 41--92.  
\bibitem[CMT2]{CMT2} Y.~Cao, D.~Maulik, and Y.~Toda, \textit{Stable pairs and Gopakumar-Vafa type invariants for Calabi-Yau 4-folds}, arXiv:1902.00003. To appear in JEMS.
\bibitem[CT1]{CT1} Y.~Cao and Y.~Toda, \textit{Curve counting via stable objects in derived categories of Calabi-Yau 4-folds}, arXiv:1909.04897.
\bibitem[CT2]{CT2} Y.~Cao and Y.~Toda, \textit{Gopakumar-Vafa type invariants on Calabi-Yau 4-folds via descendent insertions}, Comm.~Math.~Phys.~(2020), {\tt{https://doi.org/10.1007/s00220-020-03897-9}}
\bibitem[CO12]{CO} E.~Carlsson and A.~Okounkov, \textit{Exts and vertex operators}, Duke Math.~J.~161 (2012) 1797--1815.
\bibitem[CK]{CK} H.-L.~Chang and Y.-H.~Kiem, \textit{Poincar\'e invariants are Seiberg-Witten invariants}, Geom.~Topol.~17 (2013) 1149--1163.
\bibitem[CKK]{CKK} J.~Choi, S.~Katz, and A.~Klemm, \textit{The refined BPS index from stable pair invariants}, Comm.~Math.~Phys.~328 (2014) 903--954.
\bibitem[DKO]{DKO} M.~D\"urr, A.~Kabanov, and Ch.~Okonek, \textit{Poincar\'e invariants}, Topology 46 (2007) 225--294.
\bibitem[ES]{ES} G.~Ellingsrud and S.~A.~Str{\o}mme, \textit{On a cell  decomposition of the Hilbert scheme of points in the plane}, Invent.~Math.~91 (1988) 365--370.
\bibitem[GGR]{GGR} M.~van Garrel, T.~Graber, and H.~Ruddat, \textit{Local Gromov-Witten invariants are log invariants}, Adv.~Math.~350 (2019) 860--876.
\bibitem[GK]{GK} L.~G\"ottsche and M.~Kool, \textit{Virtual refinements of the Vafa-Witten formula}, Comm.~Math.~Phys.~376 (2020) 1--49. 
\bibitem[GP]{GP} T.~Graber and R.~Pandharipande, \textit{Localization of virtual classes}, Invent.~Math.~135 (1999) 487--518.
\bibitem[IP]{IP}E.~N.~Ionel and T.~Parker, \emph{The Gopakumar-Vafa formula for symplectic manifolds}, Ann.~of Math.~187 (2018) 1--64.
\bibitem[KP]{KP} A.~Klemm and R.~Pandharipande, \textit{Enumerative geometry of Calabi-Yau 4-folds}, Comm.~Math.~Phys.~281 (2008) 621--653.
\bibitem[Koo]{Koo} M.~Kool, \textit{Stable pair invariants of surfaces and Seiberg-Witten invariants}, Quart.~Jour.~Math.~67 (2016) 365--386.
\bibitem[KST]{KST} M.~Kool, V.~Shende, and R.~P.~Thomas, \textit{A short proof of the G\"ottsche conjecture}, Geom.~Topol.~15 (2011) 397--406.
\bibitem[KT1]{KT1} M.~Kool and R.~P.~Thomas, \textit{Reduced classes and curve counting on surfaces I: theory},  Alg.~Geom.~1 (2014) 334--383. 
\bibitem[KT2]{KT2} M.~Kool and R.~P.~Thomas, \textit{Reduced classes and curve counting on surfaces II: calculations},  Alg.~Geom.~1 (2014) 384--399. 
\bibitem[MNOP]{MNOP} D.~Maulik, N.~Nekrasov, A.~Okounkov, and R.~Pandharipande, \textit{Gromov-Witten theory and Donaldson-Thomas theory, I}, Compos.~Math.~142 (2006) 1263--1285.
\bibitem[OT]{OT} J.~Oh and R.~P.~Thomas, \textit{Counting sheaves on Calabi-Yau 4-folds, I}, arXiv:2009.05542.
\bibitem[PP1]{PP1} R.~Pandharipande and A.~Pixton, \textit{Gromov-Witten/pairs descendent correspondence for toric 3-folds}, Geom.~Topol. 18 (2014) 2747--2821.
\bibitem[PP2]{PP2} R.~Pandharipande and A.~Pixton, \textit{Gromov-Witten/Pairs correspondence for the quintic 3-fold}, JAMS 30 (2017) 389--449.
\bibitem[PT1]{PT1} R.~Pandharipande and R.~P.~Thomas, \textit{Curve counting via stable pairs in the derived category}, Invent.~Math.~178 (2009) 407--447.
\bibitem[PT2]{PT2} R.~Pandharipande and R.~P.~Thomas, \textit{The 3-fold vertex via stable pairs}, Geom.~Topol.~13 (2009) 1835--1876.
\bibitem[PT3]{PT3} R.~Pandharipande and R.~P.~Thomas, \textit{Stable pairs and BPS invariants}, JAMS 23 (2010) 267--297.
\bibitem[PTVV]{PTVV} T.~Pantev, B.~T\"{o}en, M.~Vaqui\'{e}, and G.~Vezzosi, \textit{Shifted symplectic structures}, Publ.~Math.~I.H.E.S. 117 (2013) 271--328.
\bibitem[LeP]{LeP} J.~Le Potier, \textit{Syst\`emes coh\'erents et structures de niveau}, Ast\'erisque 214 (1993).
\bibitem[Pre]{Pre} A.~Preygel, \textit{Some remarks on shifted symplectic structures on non-compact mapping spaces}, preprint, see webpage of A.~Preygel {\tt{http://www.tolypreygel.com/index-research.html}} 
\bibitem[Sat]{Sat} H.~Sato, \textit{The classification of smooth toric weakened Fano 3-folds}, Manuscripta Math.~109 (2002) 73--84.
\bibitem[Sie]{Sie} B.~Siebert, \textit{Virtual fundamental classes, global normal cones and Fulton's canonical classes}, in: Frobenius manifolds, Aspects Math.~36 (2004) 341--358, Vieweg. 
\bibitem[TT]{TT} Y.~Tanaka and R.~P.~Thomas, \textit{Vafa-Witten invariants for projective surfaces I: stable case}, Jour.~Alg.~Geom.~29 (2020) 603--668.
\bibitem[Tod]{Toda} Y.~Toda, \textit{On categorical Donaldson-Thomas theory for local surfaces}, arXiv:1907.09076.
\end{thebibliography}
\end{document}